\documentclass[a4paper,reqno]{amsart}

\usepackage[T1]{fontenc}
\usepackage[utf8x]{inputenc}
\usepackage[english]{babel}

\usepackage{times}
\usepackage{amsmath,  amssymb,slashed,url,bm,upgreek,amsthm,esint}
\usepackage{graphicx,enumerate}
\usepackage[hypertexnames=false]{hyperref}
\newcounter{intro}

\newtheorem{theo}[intro]{Theorem}

\newtheorem{thm}{Theorem}[section]
\newtheorem{lem}[thm]{Lemma}
\newtheorem{prop}[thm]{Proposition}
\newtheorem{cor}[thm]{Corollary}
\newtheorem{defi}[thm]{Definition}
\newtheorem{rem}[thm]{Remark}

\newcommand{\cref}[1]{Corollary~\ref{#1}}

\newcommand{\lref}[1]{Lemma~\ref{#1}}
\newcommand{\pref}[1]{Proposition~\ref{#1}}

\newcommand{\tref}[1]{Theorem~\ref{#1}}

%Lettre cal

\def\cB{\mathcal B}
\def\cC{\mathcal C}

\def\cH{\mathcal H}
\def\cK{\mathcal K}

\def\cU{\mathcal U}
\def\cV{\mathcal V}
\def\cW{\mathcal W}
%Lettre Bb

\def\R{\mathbb R}

\def\bfA{\textbf{A}}
\def\bfB{\textbf{B}}

%Operateur math

\DeclareMathOperator{\un}{\mathbf{1}}
\DeclareMathOperator{\Aun}{A_1}

\DeclareMathOperator{\dist}{d}

\DeclareMathOperator{\capa}{cap}

\DeclareMathOperator{\supp}{supp}

\DeclareMathOperator{\hW}{{\overset{o}{W}}{}^{1,2}}
\DeclareMathOperator{\Wloc}{W_{\rm loc}^{1,2}}
\DeclareMathOperator{\Wo}{W_{\rm o}^{1,2}}

\def\ra{\rangle}
\def\la{\langle}

% Shortcuts

% convergences

% some general mathematical commands

\newcommand{\loc}{{\rm loc}}

\def\ra{\rangle}
\def\la{\langle}

\title[Boundedness of Schr\"odinger operator in energy space.]{Boundedness of Schr\"odinger operator in energy space.}

\author{Gilles Carron}
\address{G. Carron, Nantes Université, CNRS, Laboratoire de Mathématiques Jean Leray, LMJL, UMR 6629, F-44000 Nantes, France.} 
\email{Gilles.Carron@univ-nantes.fr}
\author{Ma\"el Lansade}
\address{M. Lansade,  Aix-Marseille Université, CNRS, Institut de Mathématiques de Marseille, I2M, UMR 7373, 13453 Marseille Cedex 13, France.} 
\email{mael.lansade@univ-amu.fr}
\begin{document}
\maketitle
\begin{abstract} On a complete weighted Riemannian manifold $(M^n,g,\mu)$ satisfying the doubling condition and the Poincaré inequalities, we characterize the class of function $V$ such that the Schr\"odinger operator $\Delta-V$ maps the homogeneous Sobolev space $\hW(M)$ to its dual space. On Euclidean space, this result is due to Maz'ya and Verbitsky. In the proof of our result, we investigate the weighted $L^2$-boundedness of the Hodge projector.  \end{abstract}
\section{Introduction}
Our main result is a generalization of this result of Maz'ya and Verbitsky \cite{MVacta}:
\begin{thm}\label{thm:MVacta} Let $V$ be a distribution on $\R^n$, $n\ge 3$, then the following properties are equivalent:
\begin{enumerate}
\item there is a positive constant $\bfA$ such that 
\begin{equation}\label{VbounRn}
\forall \varphi\in \cC^\infty_c(\R^n)\colon \left| \langle V, \varphi^2\rangle\right| \le \bfA \int_{\R^n} |d\varphi|^2\,dx,\end{equation}
\item there exists a $1-$form $\theta=\sum_j \theta_jdx_j \in L^2_{loc}$ solving $d^*\theta=V$ and a positive constant $\bfB$:
$$\forall \varphi\in \cC^\infty_c(\R^n)\colon \int_{\R^n} |\theta|^2\varphi^2 dx \le \bfB \int_{\R^n} |d\varphi|^2\,dx.$$
\end{enumerate}
Moreover one can chose $\theta=d\Delta^{-1}V$ and the constants  $\bfA$ and $\sqrt{\bfB}$ are mutually controlled in the sense where if $2)$ holds with constant $\bfB$ then $1)$ holds with $\bfA= 2\sqrt{\bfB}$ and there is a positive constant $c_n$ depending only on $n$ such that if $1)$ holds with constant $\bfA$ then $2)$ holds with $\bfB= c_n\bfA^2.$
\end{thm}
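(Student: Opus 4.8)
The plan is to separate the two implications. The direction $(2)\Rightarrow(1)$ is elementary: if $d^*\theta=V$, then for $\varphi\in\cC^\infty_c(\R^n)$ one has $\langle V,\varphi^2\rangle=\langle\theta,d(\varphi^2)\rangle=2\int_{\R^n}\varphi\langle\theta,d\varphi\rangle\,dx$, so Cauchy--Schwarz gives $|\langle V,\varphi^2\rangle|\le 2\big(\int|\theta|^2\varphi^2\big)^{1/2}\big(\int|d\varphi|^2\big)^{1/2}\le 2\sqrt{\bfB}\int|d\varphi|^2$, which is $(1)$ with $\bfA=2\sqrt{\bfB}$. The converse is the substance, and I would prove it along the lines that also drive the weighted-manifold statement of the present paper: a duality step producing \emph{some} form-bounded primitive of $V$, followed by a projection onto exact forms that is controlled by a weighted $L^2$ bound for the Hodge projector.

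For $(1)\Rightarrow(2)$ I would first reduce, by mollification and a cut-off, to the case $V\in\cC^\infty_c(\R^n)$: with $\eta_R\in\cC^\infty_c$ equal to $1$ on $B_R$, one checks that $V_\varepsilon:=\eta_R^2\,(V*\rho_\varepsilon)$ still satisfies $(1)$ with constant $\le C_n\bfA$ — using $|d(\eta_R\varphi)|^2\le 2\eta_R^2|d\varphi|^2+2\varphi^2|d\eta_R|^2$ together with the Sobolev inequality for the cut-off, and a standard convexity estimate for the mollification — while $\Delta^{-1}V_\varepsilon$ is then a genuine decaying function; one proves $(2)$ uniformly for such $V$ and passes to a weak limit of the $d\Delta^{-1}V_\varepsilon$ in the relevant weighted space. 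For $V\in\cC^\infty_c$, the first step is the assertion that $(1)$ already forces a $1$-form $\omega\in L^2_{loc}$ with $d^*\omega=V$ and $\int_{\R^n}|\omega|^2\varphi^2\,dx\le C_n\bfA^2\int_{\R^n}|d\varphi|^2\,dx$ for all $\varphi$. I would obtain this from Hahn--Banach: the space $X$ of measurable $1$-forms with $\|\omega\|_X^2=\sup_{\varphi\neq 0}\frac{\int|\omega|^2\varphi^2\,dx}{\int|d\varphi|^2\,dx}$ is a dual Banach space, and one seeks $\omega\in X$ extending the functional $d\psi\mapsto\langle V,\psi\rangle$ from the exact forms; the estimate needed to launch Hahn--Banach is precisely Maz'ya's capacitary reformulation of condition $(1)$, which is where the hypothesis $n\ge 3$ enters and where the indefinite sign of $V$ has to be handled at the level of capacities, not by reduction to a positive measure. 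This already yields $(2)$ with a (non-canonical) primitive $\theta=\omega$.

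It then remains to identify $\omega$, up to a harmless error, with the canonical $\theta:=d\Delta^{-1}V$. The point is that $\theta$ is exactly the Hodge (Leray) projection of $\omega$ onto exact forms: with $P=d\Delta^{-1}d^*$ one has $P\omega=d\Delta^{-1}(d^*\omega)=d\Delta^{-1}V=\theta$ and $d^*\theta=d^*\omega=V$. Hence $(2)$ with the canonical $\theta$ follows as soon as one knows that $P$ is bounded on $X$ — equivalently, that the matrix $(R_jR_k)_{j,k}$ of double Riesz transforms is bounded on $L^2(w\,dx)$ with a norm depending only on the form-constant of $w$, for every weight $w$ satisfying $\int w\,\varphi^2\le C\int|d\varphi|^2$. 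Tracking the constants through the two steps yields $\bfB=c_n\bfA^2$.

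The main obstacle, by a wide margin, is this last weighted boundedness of the Hodge projector: the weights that occur here — the form-bounded measures — need \emph{not} be Muckenhoupt $A_2$ weights (already $\varphi^2$ for $\varphi\in\cC^\infty_c$ fails to be one), so the classical weighted Calder\'on--Zygmund theory for the Riesz transforms does not apply and a genuinely new estimate on this scale of spaces is required; this is the analytic heart of the matter, and essentially the only step that is neither local nor elementary. A secondary difficulty is making the capacitary duality of the first step work for a sign-changing distributional $V$.
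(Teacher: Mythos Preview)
Your direction $(2)\Rightarrow(1)$ is exactly the paper's. The converse, however, is not organized the way you propose, and your plan contains a genuine error. Your ``main obstacle'' --- boundedness of the Hodge projector $P=d\Delta^{-1}d^*$ on $L^2(w\,dx)$ uniformly over all form-bounded weights $w$ --- is simply false: take $w=\varphi_0^2$ for any $\varphi_0\in\cC^\infty_c(\R^n)$ (form-bounded by Sobolev) and $\omega=(f,0,\dots,0)$ with $f\in\cC^\infty_c$ supported away from $\supp\varphi_0$; then $\int|\omega|^2w=0$ while $(P\omega)_j=R_jR_1f$ does not vanish on $\supp\varphi_0$, so $P$ is unbounded on $L^2(w)$. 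What you actually need is the weaker statement that $P$ is bounded on your space $X$, but that is equivalent to the full theorem with the canonical $\theta$ and cannot serve as an intermediate lemma. Your Hahn--Banach Step~A is also not substantiated: you do not identify the predual of $X$ nor explain why the functional $d\psi\mapsto\langle V,\psi\rangle$ lies in it; the capacitary reformulation you invoke characterizes nonnegative form-bounded \emph{potentials}, not $1$-forms in $X$.

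The paper (following Maz'ya--Verbitsky) avoids both issues by never seeking a non-canonical primitive and never invoking weighted boundedness on the whole class of form-bounded weights. One defines $\theta=d\Delta^{-1}V$ directly via $\langle\theta,\beta\rangle:=\langle V,\varphi_\beta\rangle$ with $\Delta\varphi_\beta=d^*\beta$, and then proves the capacitary estimate $\int_{\cU}|\theta|^2\le C\bfA^2\capa(\cU)$ for every bounded open $\cU$, which is equivalent to form-boundedness of $|\theta|^2$. For fixed $\cU$ one introduces its equilibrium potential $h$ and the factorization $\varphi_\beta=h^{\tau/2}\cdot(h^{-\tau/2}\varphi_\beta)$ with $\tau>1$ close to $1$; the bilinear form of hypothesis~(1) then gives $|\langle\theta,\beta\rangle|\le\bfA\,\|dh^{\tau/2}\|_{L^2}\,\|d(h^{-\tau/2}\varphi_\beta)\|_{L^2}$. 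The first factor is $\le C\,\capa(\cU)^{1/2}$ by a direct coarea computation. The second is controlled by $\|\beta\|_{L^2}$ using a weighted Hardy inequality together with boundedness of $\Pi$ on $L^2(h^{\pm\tau}d\mu)$ --- and here the crucial point is that the specific weights $h^\tau$ are $A_1$ (powers of Green potentials), so classical weighted Calder\'on--Zygmund theory \emph{does} apply, first giving the range $|\tau|<1$ for free and then pushing past $\tau=1$ via a reverse-H\"older/extrapolation argument. In short: the weighted Hodge bound enters only for the one-parameter family $h^\tau$ attached to each $\cU$, not for arbitrary form-bounded weights, and that is what makes the argument close.
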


An equivalent formulation of the condition \eqref{VbounRn} is that 
$$\forall \varphi,\phi\in \cC^\infty_c(\R^n)\colon \left| \langle V, \varphi\phi\rangle\right| \le \bfA \|d\varphi\|_{L^2}\,\|d\phi\|_{L^2}.$$

When  $(M^n,g,\mu)$ is a complete weighted Riemannian manifold where $d\mu=e^f d\text{vol}_g$ is a smooth measure, we would like to characterize the distributions $V$ on $M$ for which there exists a positive constant $\bfA$ such that: 
\begin{equation}\label{VbounM}
\forall \varphi,\phi\in \cC^\infty_c(M)\colon \left| \langle V, \varphi\phi\rangle\right| \le \bfA \|d\varphi\|_{L^2_\mu}\|d\phi\|_{L^2_\mu};\end{equation}
where for $\varphi\in\cC^\infty_c(M)$, we define
$$\|d\varphi\|^2_{L^2_\mu}=\int_M |d\varphi|_g^2\,d\mu.$$
A first limitation is that $(M^n,g,\mu)$ must be non-parabolic:

\begin{defi}\label{def:para} A complete weighted Riemannian manifold $(M^n,g,\mu)$ is said to be parabolic if there is a sequence $\chi_\ell\in \cC^\infty_c(M)$ such that
$$\begin{cases}
0\le \chi_\ell\le  1&\text{ every where on }M\\
\lim_{\ell\to+\infty} \chi_\ell= 1&\text{  uniformly on compact set }\\
\lim_{\ell\to+\infty}\int_M |d\chi_\ell|_g^2\, d\mu=0.&\end{cases}$$
A complete weighted Riemannian manifold that is not parabolic is said to be non-parabolic.
\end{defi}
It is well-known that the Euclidean space $\R^n$ is non-parabolic if and only if $n\ge 3$. If $V$ is a distribution satisfying \eqref{VbounM} on a complete parabolic weighted Riemannian manifold, then testing \eqref{VbounM}, with $\varphi$ and  $\phi=\chi_\ell$  (given by Definition \ref{def:para}), we get that $\langle V, \varphi\rangle=0$;  hence the zero distribution is the only one verifying  \eqref{VbounM}. 

If  $(M^n,g,\mu)$ is complete weighted Riemannian manifold, we define its Laplacian $\Delta$ through the Green formula:
$$  \forall \varphi,\phi\in \cC^\infty_c(M)\colon \int_M \langle d\varphi,d\phi\rangle_g\,d\mu=\int_M\varphi\,\Delta\phi\,d\mu,$$
so that our convention is that on the Euclidean space
$\Delta=-\sum_j \partial^2\!/\!\partial x_j^2.$ We will also note $\Delta$ to be the unique self-adjoint extension associated to the symmetric operator
$$\Delta\colon \cC^\infty_c(M)\rightarrow L^2_\mu(M).$$ 
Moreover $(M^n,g,\mu)$ is non-parabolic then one can define the homogeneous Sobolev space
$\hW(M)$ to be  the completion of $\cC^\infty_c(M)$ for the norm $\varphi\mapsto \|d\varphi\|_{L_\mu^2}$ and there is a natural injection $\hW(M) \hookrightarrow \Wloc(M)$. The Green formula implies that $\Delta$ extends as a bounded operator between $\hW(M)$ and its dual $\left(\hW(M)\right)'$, and a distribution $V$  satisfies \eqref{VbounM} if and only if the Schr\"odinger operator $\Delta+V$ extends to a  bounded operator:
$$\Delta+V\colon \hW(M)\rightarrow \left(\hW(M)\right)'.$$

In order to be able to show a generalization of \tref{thm:MVacta}, we will assume that $(M^n,g,\mu)$ satisfies the doubling condition and the Poincaré inequalities. 

\begin{defi}A  complete weighted Riemannian manifold $(M^n,g,\mu)$ is said to satisfy the doubling condition if there are positive constants $\upkappa,\upnu$ such that for any $x\in M$ and any $0<r\le R\colon$
\begin{equation}\label{doubling}
\tag{$\text{D}_{\upkappa,\upnu}$} \mu\left(B(x,R)\right)\le \upkappa \left(\frac R r\right)^\upnu \ \mu\left(B(x,r)\right).
\end{equation}\end{defi}
\begin{rem}\label{nu2}It is well know that if  $(M^n,g,\mu)$ is non-parabolic and satisfy the doubling condition \eqref{doubling} then $\nu>2$.
\end{rem}
\begin{defi}A  complete weighted Riemannian manifold $(M^n,g,\mu)$ is said to satisfy the Poincaré inequalities if there is a positive constant $\uplambda$ such that for any geodesic ball $B$ of radius $r$:
\begin{equation}\label{def:P}\tag{$\text{P}_\uplambda$} \forall\varphi\in \cC^1(B):\int_B \left|\varphi-\varphi_B\right|^2\ d\mu\le \uplambda\, r^2  \int_B \left|d\varphi\right|^2 d\mu,\end{equation}
where $\varphi_B=\fint_B \varphi d\mu$.
\end{defi}

If  the Ricci curvature of $(M^n,g)$ is non negative, then $(M^n,g,\text{vol}_g)$ satisfies these two conditions. We know that there is a smooth positive function (the heat kernel) $P\colon (0,+\infty)\times M\times M\rightarrow \R$ such that for any $f\in L^2_{c}(M):$
$$\left(e^{-t\Delta}f\right)(x)=\int_M P(t,x,y) f(y)\, d\mu(y).$$
 According to \cite{Griharnack,Saloffharnack}, the conjonction of doubling condition \eqref{doubling} and of the Poincaré inequalities  \eqref{def:P} is equivalent to  uniform upper and lower Gaussian estimate of the heat kernel:
$$
\forall x,y\in M,\ \forall t>0\colon  \frac{c}{\mu\left(B(x,\sqrt{t})\right)}e^{- \frac{d^2(x,y)}{ct}}\le P(t,x,y)\le \frac{C}{\mu\left(B(x,\sqrt{t})\right)}e^{-\frac{d^2(x,y)}{5t}}.
$$

Our main result is then the following:

\begin{theo}\label{thm:MV}Let  $(M,g,\mu)$ be a complete non-parabolic weighted Riemannian manifold satisfying the the doubling condition \eqref{doubling} and the Poincaré inequalities $\eqref{def:P}$. A distribution $V$ satisfies the inequality \eqref{VbounM} if and only of there is a $1-$form $\theta\in L^2_{loc}(T^*M)$ solving
$d^*_\mu\theta=V$ and such that
\begin{equation}\label{ThetabounM}\forall \varphi\in \cC^\infty_c(M)\colon \int_{M} |\theta|^2\varphi^2\, d\mu \le \bfB \int_{M} |d\varphi|_g^2\,dx.\end{equation}
Moreover the constants  $\bfA$ and$\sqrt{\bfB}$ are mutually controlled: if \eqref{ThetabounM} holds with constant $\bfB$ then \eqref{VbounM} holds with $\bfA= 2\sqrt{\bfB}$ and there is a positive constant $c$ depending only of $\upkappa,\upnu,\uplambda$ such that if \eqref{VbounM} holds with constant $\bfA$ then \eqref{ThetabounM} holds with $\theta=d\Delta^{-1}V$ and $\bfB= c\bfA^2.$

\end{theo}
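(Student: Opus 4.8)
The plan is to follow the structure of the Euclidean argument of Maz'ya–Verbitsky but to replace the explicit Riesz potential estimates, which are unavailable on a general manifold, by the heat-kernel machinery guaranteed by the doubling condition and the Poincaré inequalities. The easy direction is that \eqref{ThetabounM} implies \eqref{VbounM}: given such a $\theta$, for $\varphi,\phi\in\cC^\infty_c(M)$ one writes $\langle V,\varphi\phi\rangle=\langle d^*_\mu\theta,\varphi\phi\rangle=\int_M\langle\theta,d(\varphi\phi)\rangle\,d\mu=\int_M\langle\theta,\varphi\,d\phi+\phi\,d\varphi\rangle\,d\mu$, and then Cauchy–Schwarz together with \eqref{ThetabounM} (applied to $\varphi$ and to $\phi$) yields $|\langle V,\varphi\phi\rangle|\le\sqrt{\bfB}\big(\|d\phi\|_{L^2_\mu}\|d\varphi\|_{L^2_\mu}+\|d\varphi\|_{L^2_\mu}\|d\phi\|_{L^2_\mu}\big)=2\sqrt{\bfB}\,\|d\varphi\|_{L^2_\mu}\|d\phi\|_{L^2_\mu}$, giving $\bfA=2\sqrt{\bfB}$. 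This step uses nothing beyond the Green formula and is where the factor $2$ comes from.

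For the hard direction, suppose \eqref{VbounM} holds with constant $\bfA$. Then $V$ defines a bounded functional on $\hW(M)$, equivalently a bounded operator $\hW(M)\to(\hW(M))'$; since $(M,g,\mu)$ is non-parabolic, $\Delta\colon\hW(M)\to(\hW(M))'$ is an isomorphism, so $u:=\Delta^{-1}V\in\hW(M)$ is well defined and $\|du\|_{L^2_\mu}\le\bfA^{1/2}$ (after normalizing, using that $V$ has $\hW$-norm at most $\bfA^{1/2}$ by the polarized form of \eqref{VbounM}). Set $\theta:=du$; then $d^*_\mu\theta=d^*_\mu du=\Delta u=V$ in the distributional sense, and $\theta\in L^2_\mu(T^*M)\subset L^2_{loc}$. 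It remains to prove the weighted inequality $\int_M|\theta|^2\varphi^2\,d\mu\le c\,\bfA\int_M|d\varphi|^2\,d\mu$ with $c=c(\upkappa,\upnu,\uplambda)$. The key identity is to test \eqref{VbounM} with the pair $u$ and $u\varphi^2$: expanding,
$$\langle V,u\varphi^2\rangle=\int_M\langle du,\,d(u\varphi^2)\rangle\,d\mu=\int_M|du|^2\varphi^2\,d\mu+2\int_M\varphi\,u\,\langle du,d\varphi\rangle\,d\mu,$$
so
$$\int_M|\theta|^2\varphi^2\,d\mu=\langle V,u\varphi^2\rangle-2\int_M\varphi\,u\,\langle du,d\varphi\rangle\,d\mu.$$
The second term is controlled by Cauchy–Schwarz as $2\big(\int|\theta|^2\varphi^2\big)^{1/2}\big(\int u^2|d\varphi|^2\big)^{1/2}$ and absorbed (after Young's inequality) into the left-hand side, provided one can bound $\int_M u^2|d\varphi|^2\,d\mu$; this is a weighted Hardy-type inequality for $u\in\hW(M)$ against the "weight" $|d\varphi|^2$, and it is exactly here that the doubling and Poincaré hypotheses enter, through the Gaussian heat-kernel bounds and the resulting $L^2$ Sobolev/Hardy inequalities on $(M,g,\mu)$. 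The first term $\langle V,u\varphi^2\rangle$ needs the bilinear bound \eqref{VbounM} applied to the pair $(u,u\varphi^2)$ (again via approximation of $u$ by $\cC^\infty_c$ functions), giving $|\langle V,u\varphi^2\rangle|\le\bfA\,\|du\|_{L^2_\mu}\,\|d(u\varphi^2)\|_{L^2_\mu}$, and $\|d(u\varphi^2)\|_{L^2_\mu}$ expands into the same two ingredients, $\int|\theta|^2\varphi^2$ and $\int u^2|d\varphi|^2$, so that the whole estimate closes as a quadratic inequality in the unknown $X:=\big(\int_M|\theta|^2\varphi^2\,d\mu\big)^{1/2}$.

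The main obstacle is the weighted bound $\int_M u^2|d\varphi|^2\,d\mu\lesssim\int_M|d\varphi|^2\,d\mu$ uniformly in the relevant class of $\varphi$ (localized cutoffs), which is what forces the geometric hypotheses and is the technical heart of the paper — this is precisely the point where the announced "weighted $L^2$-boundedness of the Hodge projector" is needed. Concretely, I would reduce matters to testing with functions $\varphi$ supported on annuli $B(x,2r)\setminus B(x,r)$ and normalized so that $|d\varphi|\lesssim 1/r$, translating the desired inequality into the statement that $u$ has controlled mean oscillation at all scales, a John–Nirenberg / BMO-type estimate that follows from the Gaussian bounds via the standard Grigor'yan–Saloff-Coste equivalence quoted in the excerpt. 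Once this localized estimate is in hand, a partition-of-unity and summation argument — using doubling to control the overlap and $\upnu>2$ (Remark \ref{nu2}) to sum a geometric series — upgrades it to the global weighted inequality, and then the quadratic-inequality bookkeeping of the previous paragraph yields $\bfB=c\,\bfA^2$ with $c=c(\upkappa,\upnu,\uplambda)$, completing the proof that $\theta=d\Delta^{-1}V$ works. The identification $\theta=d\Delta^{-1}V$ and the two-sided control of the constants are then automatic from the construction.
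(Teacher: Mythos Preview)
Your easy direction is correct and matches the paper. The hard direction, however, has a genuine gap that cannot be repaired along the lines you sketch.

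The first error is the sentence ``$V$ defines a bounded functional on $\hW(M)$, \dots\ so $u:=\Delta^{-1}V\in\hW(M)$ is well defined and $\|du\|_{L^2_\mu}\le\bfA^{1/2}$.'' Condition \eqref{VbounM} says only that the \emph{multiplication operator} $\phi\mapsto V\phi$ is bounded $\hW(M)\to(\hW(M))'$; it does \emph{not} say $V\in(\hW(M))'$. On $\R^n$, $n\ge 3$, the Hardy potential $V=c|x|^{-2}$ satisfies \eqref{VbounM} but is not in $(\hW)'$ (check the scaling), so $\Delta^{-1}V\notin\hW$ in general, and the bound $\|du\|_{L^2_\mu}\le\bfA^{1/2}$ makes no sense. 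This already breaks the step where you apply \eqref{VbounM} to the pair $(u,u\varphi^2)$.

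The second error is the ``main obstacle'' itself: the estimate $\int_M u^2|d\varphi|^2\,d\mu\lesssim\int_M|d\varphi|^2\,d\mu$ uniformly in $\varphi$ would force $u\in L^\infty$, which is false for generic $V$. Restricting to annular cutoffs and invoking a BMO property of $u$ does not close the loop, because the quadratic inequality you set up needs the bound for the \emph{given} test function $\varphi$, not just for cutoffs, and no partition-of-unity argument converts a mean-oscillation bound on $u$ into the pointwise-type control you need.

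The paper circumvents both issues by never forming $u=\Delta^{-1}V$ directly. Instead it defines $\theta$ by duality, $\langle\theta,\beta\rangle:=\langle V,\varphi_\beta\rangle$ with $\varphi_\beta=\Delta^{-1}d^*_\mu\beta\in\hW(M)$, and proves the capacity criterion $\int_{\cU}|\theta|^2\,d\mu\le C\bfA^2\capa(\cU)$ for every bounded open $\cU$ (this is condition~iii) in \tref{thm:trace}). The key trick is the factorization
\[
\langle V,\varphi_\beta\rangle=\langle V,\,h^{\tau/2}\cdot h^{-\tau/2}\varphi_\beta\rangle,
\]
where $h$ is the equilibrium potential of $\cU$ and $\tau>1$. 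One factor, $h^{\tau/2}$, is in $\hW(M)$ with $\|dh^{\tau/2}\|_{L^2_\mu}^2\lesssim\capa(\cU)$ by \pref{prop:htau}. The other factor, $h^{-\tau/2}\varphi_\beta$, lies in $\hW(M)$ with $\|d(h^{-\tau/2}\varphi_\beta)\|_{L^2_\mu}\lesssim\|\beta\|_{L^2_\mu}$; proving this is exactly where the weighted Hodge projector bound on $L^2_{h^{-\tau}\mu}$ with exponent $\tau>1$ (\tref{thm:WHodge}) is used, together with the Hardy inequality \eqref{hardyh} and the parabolicity of the weight $h^{-\tau}\varphi_\beta^2$. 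Applying \eqref{VbounM} to this product of two $\hW$ functions then yields $|\langle\theta,\beta\rangle|\le C\bfA\,\capa(\cU)^{1/2}\|\beta\|_{L^2_\mu}$, which is the capacity estimate. This factorization through the equilibrium potential is the idea you are missing; the weighted Hodge bound enters not to control $u$, but to control the decay of $\varphi_\beta$ relative to $h$.
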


We recall that the equation $d^*_\mu\theta=V$ is equivalent to the fact that
$$\forall \varphi \in \cC^\infty_c(M)\colon \int_M \langle \theta,d\varphi\rangle_g\, d\mu=\langle V, \varphi\rangle.$$

Our proof follows the original arguments of Maz'ya and Verbitsky but for this purpose, we prove some new results about the Hodge projector. To describe this operator let  $(M,g,\mu)$ be a complete non-parabolic weighted Riemannian manifold, it has a positive Green kernel defined for $x\not=y$ by:
$$G(x,y)=\int_0^{+\infty} P(t,x,y)dt.$$
When $\beta\in \cC^\infty_c(T^*M)$  is a smooth compactly supported $1-$form, one defines $\Pi\beta$ by
$$\Pi\beta=d\varphi\text{ where } \varphi(x)=\int_M G(x,y) d^*_\mu\beta(y)\, d\mu(y).$$
The operator $\Pi$ extends to a bounded operator on $L^2_\mu(T^*M)$ and this extension, which is also noted $\Pi$, is the orthogonal projection on the closure of   $d\cC^\infty_c(M)\subset L^2_\mu(T^*M)$. The operator $\Pi=d\Delta^{-1}d^*_\mu$ is called the Hodge projector.

When $\rho$ is a non negative and non trivial Radon measure with compact supported then the function $h_\rho$ defined by
\begin{equation}\label{defh}
h_\rho(x)=\int_M G(x,y)d\rho(y)\end{equation} is  a positive superharmonic function. A crucial result for proving \tref{thm:MV} is the following

\begin{theo}\label{thm:BoundH}Let  $(M,g,\mu)$ be a complete non-parabolic weighted Riemannian manifold, then for any $\delta\in (-1,1)$ and any non negative Radon measure with compact supported $\rho$ and associated superharmonic function $h_\rho$ defined by \eqref{defh}:
$$
\forall \beta \in \cC^\infty_c(T^*M)\colon\int_M \left|\Pi(\beta)\right|^2 \, h_\rho^\delta d\mu\le \left(\frac{1+|\delta|}{1-|\delta|}\right)^2\  \int_M \left|\beta\right|^2 \, h_\rho^\delta d\mu.$$
Moreover if $(M,g,\mu)$ satisfies  the doubling condition \eqref{doubling} and the Poincaré inequalities $\eqref{def:P}$ then there are some $\delta_+>1$ and $C>0$ depending only on $\upkappa,\upnu,\uplambda$ such that  for any non negative Radon measure with compact supported $\rho$ and associated superhamronic function $h_\rho$ defined by \eqref{defh}: 
$$
\forall \beta \in \cC^\infty_c(T^*M)\colon\int_M \left|\Pi(\beta)\right|^2 \, h_\rho^{\delta_+} d\mu\le C\  \int_M \left|\beta\right|^2 \, h_\rho^{ \delta_+} d\mu.$$
\end{theo}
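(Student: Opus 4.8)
The first assertion is the "easy" part: it is a weighted $L^2$ estimate for the Hodge projector $\Pi=d\Delta^{-1}d^*_\mu$ with weight $w=h_\rho^\delta$, $|\delta|<1$. The natural route is a commutator/integration-by-parts argument combined with the fact that $h_\rho$ is superharmonic. Concretely, $\Pi\beta=d\varphi$ where $\Delta\varphi=d^*_\mu\beta$, so I want to bound $\int |d\varphi|^2 h_\rho^\delta\,d\mu$ by $\int |\beta|^2 h_\rho^\delta\,d\mu$. Testing the equation $\Delta\varphi = d^*_\mu\beta$ against $\varphi h_\rho^\delta$ and expanding $d(\varphi h_\rho^\delta) = h_\rho^\delta\,d\varphi + \delta\varphi h_\rho^{\delta-1}\,dh_\rho$, one gets
\[
\int_M |d\varphi|^2 h_\rho^\delta\,d\mu + \delta\int_M \varphi\,h_\rho^{\delta-1}\langle d\varphi, dh_\rho\rangle\,d\mu = \int_M \langle \beta, d\varphi\rangle h_\rho^\delta\,d\mu + \delta\int_M \varphi\,h_\rho^{\delta-1}\langle\beta,dh_\rho\rangle\,d\mu.
\]
The superharmonicity of $h_\rho$ (i.e. $\Delta h_\rho\ge 0$) lets me control the cross term $\int \varphi\,h_\rho^{\delta-1}\langle d\varphi,dh_\rho\rangle$ after a further integration by parts, essentially because $d^*_\mu(h_\rho^{\delta-1}dh_\rho)$ has a favorable sign when $\delta<1$: writing $h_\rho^{\delta-1}dh_\rho=\tfrac1\delta d(h_\rho^\delta)$ and using $\Delta h_\rho\ge0$ produces a term $\int \varphi^2 h_\rho^{\delta-2}|dh_\rho|^2\,d\mu\ge0$ with the right sign plus the non-negative measure term. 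Then Cauchy–Schwarz on the remaining pieces, with the elementary inequality $\big|\langle\varphi h_\rho^{\delta-1}dh_\rho,\,\cdot\,\rangle\big|\le \epsilon(\ldots)+\epsilon^{-1}(\ldots)$, and optimizing, yields exactly the constant $\left(\frac{1+|\delta|}{1-|\delta|}\right)^2$. A density/cutoff argument is needed to justify all integrations by parts since $h_\rho$ is only superharmonic and $\varphi\in\hW$ is not compactly supported; here one uses the non-parabolicity and the Green-kernel representation, approximating $h_\rho$ by $h_\rho+\epsilon$ or truncating, and one checks the boundary terms vanish. I expect this first part to be largely a careful but routine weighted energy estimate.

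The second assertion — pushing the exponent past $1$, uniformly in $\rho$ — is the genuinely new and hard part, and is where the doubling condition and Poincaré inequalities enter. The key point is a self-improvement (open-endedness) phenomenon: once a reverse-type weighted inequality holds for all exponents in $(-1,1)$, the doubling+Poincaré geometry should allow extrapolation to an interval $(-1,\delta_+)$ with $\delta_+>1$. The mechanism I would use: under \eqref{doubling} and \eqref{def:P}, the functions $h_\rho$ are, uniformly, in a reverse Hölder / Muckenhoupt class — more precisely the Green kernel satisfies two-sided bounds $G(x,y)\asymp \int_{d(x,y)^2}^{\infty}\frac{dt}{\mu(B(x,\sqrt t))}$, which makes $h_\rho$ behave like a nonnegative $\Delta$-superharmonic function with a quantitative Harnack inequality. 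One then establishes that $h_\rho^{\delta}$ is an $A_\infty$ (indeed $A_2$, or a reverse Hölder) weight with constants depending only on $\upkappa,\upnu,\uplambda$ and on $\delta$ staying in a fixed compact subinterval of $(-1,1)$ — and, crucially, that these weights remain $A_2$ for $\delta$ slightly larger than $1$. The boundedness of the Hodge projector (a Calderón–Zygmund-type operator, being $d\Delta^{-1}d^*_\mu$ in a space with Gaussian heat kernel bounds) on $L^2(w\,d\mu)$ for $w\in A_2$ is then the standard weighted CZ theory adapted to this metric-measure setting; combined with the first part of the theorem it gives the claim for $\delta_+>1$.

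The main obstacle, concretely, will be proving the uniform (in $\rho$) membership of $h_\rho^\delta$ in the relevant weight class for $\delta$ in a neighborhood of $[-1,1]$ that reaches beyond $1$. The natural tool is a Harnack inequality for $h_\rho$ together with the Maz'ya–Verbitsky-style pointwise estimate $h_\rho(x)\asymp \fint_{B(x,r)}h_\rho\,d\mu$ on balls away from $\supp\rho$, but near $\supp\rho$ one must handle the local blow-up of $G$; the doubling exponent $\nu>2$ (Remark \ref{nu2}) is what guarantees that $h_\rho^\delta$ is locally integrable for $\delta$ up to some $\delta_+>1$ and that the reverse Hölder constant does not degenerate. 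I would therefore structure the proof as: (i) reduce to a weighted resolvent/heat-semigroup estimate using the Green-kernel formula for $\Pi$; (ii) prove a uniform Harnack and reverse-Hölder bound for $\{h_\rho\}$ using doubling + Poincaré; (iii) invoke weighted $L^2$-boundedness of Calderón–Zygmund operators under Gaussian bounds to conclude. Alternatively — and this may be cleaner and closer to the paper's likely approach — one repeats the energy argument of the first part but now exploiting, via a Caccioppoli-type inequality for $h_\rho$ coming from Poincaré, an extra gain that compensates the bad sign of the cross term when $\delta>1$, so that the constant stays finite up to some explicit $\delta_+=\delta_+(\upkappa,\upnu,\uplambda)$.
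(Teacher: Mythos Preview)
Your sketch for the first assertion is essentially the paper's argument. The paper organizes it slightly differently: it first uses the superharmonicity of $h^\delta$ (not of $h$) applied to $\varphi^2$ to obtain $\int |d\varphi|^2 h^\delta\,d\mu \le \int (\Delta\varphi)\,\varphi\, h^\delta\,d\mu$, then integrates the right-hand side by parts once and closes with the weighted Hardy inequality
\[
\left(\tfrac{1-\delta}{2}\right)^2 \int_M \frac{|dh|^2}{h^2}\,\varphi^2\, h^\delta\,d\mu \le \int_M |d\varphi|^2\, h^\delta\,d\mu,
\]
itself a consequence of superharmonicity. The constant $\left(\frac{1+|\delta|}{1-|\delta|}\right)^2$ drops out directly, after reducing to $\delta\in(0,1)$ by duality. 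So your route is right; the explicit tool is the Hardy inequality rather than an $\epsilon$-optimization.

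For the second assertion your ingredients are partly correct --- the paper does prove that $h_\rho^\tau$ is an $A_1$ weight uniformly for $\tau<\upnu/(\upnu-2)$ --- but there is a genuine gap in the mechanism. The Hodge projector is \emph{not} a Calder\'on--Zygmund operator in this setting: under doubling plus Poincar\'e, $\Pi$ is only known to be $L^p$-bounded for $p$ in a small interval $[p_-,p_-^*]$ around $2$ (Auscher--Coulhon), so ``standard weighted CZ theory for $w\in A_2$'' is simply unavailable. The paper does not appeal to CZ kernel estimates at all. Instead:
\begin{enumerate}
\item it interpolates the first assertion (weighted $L^2$ with weight $h^{\sqrt{\sigma}}$, $\sigma<1$) against the unweighted $L^{p_-}$ bound to obtain a weighted $L^p$ bound on $(M,h^\sigma\mu)$ for some $p<2$ --- so the first assertion is a genuine input to the second, not a warm-up;
\item it proves a \emph{weighted reverse H\"older inequality for gradients of harmonic functions}: for every ball $B$ and harmonic $\psi$ on $3B$,
\[
\frac{1}{\fint_B h^\sigma}\fint_B h^\sigma |d\psi|^q\,d\mu \;\le\; C\left(\frac{1}{\fint_{2B} h^\sigma}\fint_{2B} h^\sigma |d\psi|^2\,d\mu\right)^{q/2}
\]
for some $q>2$, using the $A_1$ property of $h^\sigma$ together with the H\"older-regularity and reverse-H\"older estimates for harmonic functions coming from Poincar\'e;
\item it feeds (1) and (2) into the Auscher--Martell extrapolation theorem \cite[Theorem~3.14]{AMadv}, which is designed precisely for operators with limited $L^p$ range and this off-diagonal reverse-H\"older structure, to obtain boundedness on $L^2(\omega\, h^\sigma\,d\mu)$ for suitable $\omega$; choosing $\omega=h^{\delta}$ with $\sigma+\delta>1$ gives $\delta_+>1$.
\end{enumerate}
Your alternative route --- repeating the energy argument with a Caccioppoli gain --- does not recover the bad sign of the Hardy term once $\delta>1$, and the paper does not pursue it.
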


Classical result on singular integral implies that on $\R^n$, the second conclusion of \tref{thm:BoundH} holds with $\delta_+$ being any real number strictly less than $n/(n-2)$; this was an important argument in the proof of Maz'ya-Verbitsky (see \cite[page 284]{MVacta}).
The study of the boundedness of the Hodge operator on $L^p$ spaces in relationship with the boundedness of the Riesz transform on $L^p$ spaces is a well  developed subject (see for instance  \cite[subsection 2.3]{ACPisa}); the question of the boundedness of the Hodge operator on weighted  $L^p$ spaces has been extensively studied by Auscher and Martell  \cite{AMadv, AMZ}. These results do not imply directly the second assertion of \ref{thm:BoundH}. The starting point is the universal boundedness of the Hodge projector on  
$L^2_{h^\delta\mu}(T^*M)$ for $\delta\in (-1,1)$ and we apply a result of Auscher-Martell \cite{AMadv} on the weighted Riemannian manifold $(M,g,h^\delta \mu)$; in order to be in position of using this result, we have to prove that those functions $h_\rho^\delta$ are  $\Aun\!\!-$weights. 

Our universal boundedness of the Hodge projector on some weighted $L^2$ space could be a first step toward a more general investigation of the $L^p$-weighted boundedness of the Hodge operator .

Another important result  for proving \tref{thm:MV} is a a decay estimate on $\Pi(\beta)$ when  $\beta\in \cC^\infty_c(T^*M)$, more precisely, we show that there is some $p>1$ such that 
$$\int_{M\setminus B(o,1)} G^{-p}(o,x) |\Pi(\beta)|^2(x)\, d\mu(x)<\infty.$$

The next section is devoted to the proof of the first conclusion of  \tref{thm:BoundH} and we will recall and prove some useful results about Green kernel and superharmonic functions; in section 3, we present various equivalent conditions  for an estimate of the type
$$\forall \varphi\in \cC^\infty_c(M)\colon \int_{M} q\varphi^2\, d\mu \le C \int_{M} |d\varphi|_g^2\,dx,$$
when $q$ is a non-negative locally integrable function.  One of the first results in the Euclidean case is due to Fefferman-Phong \cite{FP} and there were then many works dealing with this questions in  Euclidean spaces or on homogeneous space (see for instance \cite{Fefferman,KS,Mazya,MVarkiv,PW,SawyerW,SWZ,Sch}; the Riemannian setting was adressed recently in \cite{Lansade}. The second conclusion of  \tref{thm:BoundH} is proven in section 4 and  \tref{thm:MV} is proven in section 5. 
In an appendix, we explain how an elegant Euclidean argument of Verbitksy \cite{Verbsurvol} gives  trace inequalities on doubling metric measure spaces.

\noindent \textbf{Acknowledgments:} We are partially supported by the ANR grants ANR-18-CE40-0012: RAGE.
 and ANR-17-CE40-0034: CCEM and we  thank the Centre Henri Lebesgue ANR-11-LABX-0020-01 for creating an attractive mathematical environment. 
\section*{Some notations}

When $(M,g,\mu)$ is a weighted complete Riemannian manifold then for any Borel set $\cB\subset M$, we will note
$\un_\cB$ the characteristic function of $\cB$; if moreover $\mu(\cB)\not=0$ and if $\varphi$ is an integrable function on $\cB$, the mean of $\varphi$ on $\cB$ will be noted by
$$\fint_\cB \varphi=\fint_\cB \varphi d\mu=\frac{1}{\mu(\cB)}\int_{\cB} \varphi d\mu;$$
the subscript ${}_c$ will indicate a space made of compactly support functions or forms, for instance $L^2_{c}(M), \cC^\infty_c(M),\cC^\infty_c(T^*M)$; and the space of locally $L^2$ fonction (resp. locally $W^{1,2}$ functions) is noted $L^2_{loc}(M)$ (resp. $W_{loc}^{1,2}$).

When $B=B(x,r)$ is a geodesic ball and $\theta>0$ we will define $\theta B:=B(x,\theta r)$. For each $x\in M$ and $\beta\in T^*_xM$ the norm of $\beta$ is 
$$ |\beta|=|\beta|_g=\sup_{\xi \in T_xM, g_x(\xi,\xi)=1} \beta(\xi)$$ and it induces on $T^*_xM$ a scalar product that will be noted
$\langle\cdot,\cdot\rangle_g$ or $\langle\cdot,\cdot\rangle$, from now we will omit the $g$ subscript, this induces a scalar product on $1-$forms
$$\beta_1,\beta_2\in \cC^\infty_c(T^*M)\colon \langle \beta_1,\beta_2 \rangle=\int_M \langle\ \beta_1(x),\beta_2(x)\rangle_g\, d\mu(x).$$
The associate Hilbert space collecting all square integrable measurable section of $T^*M$ will be noted $L^2_\mu(T^*M)$.
If $\cU\subset M$ is a bounded open subset $\Wo(\cU)$ is a the closure of $\cC^\infty_c(\cU)$  in  $\Wloc(M)$, it is also the completion of $\cC^\infty_c(\cU)$ for the topology induced by the norm $\varphi\mapsto \sqrt{\|\varphi\|_{L^2_\mu}^2+\|d\varphi\|_{L^2_\mu}^2}$.

 \section{Weighted inequalities given by equilibrium potentials}
In this section, we collect some well-known facts and others useful technical results that leads to a very general weighted boundedness result for the Hodge projector. Thorough all this section, $(M,g,\mu)$ is a weighted complete Riemannian manifold that is assumed to be non parabolic.
\subsection{Capacity and equilibrium potential} The non parabolicity implies the following properties \cite{Ancona,BamsGri}:
\begin{prop}\label{prop:capa}
\begin{enumerate}[a)]
\item For each bounded open set $\cU\subset M$, there is a positive constant $C_\cU$ such that:
$$\forall \varphi\in \cC^\infty_c(M)\colon \int_{\cU} \varphi^2d\mu\le C_\cU\, \int_M |d\varphi|^2d\mu.$$
\item $(M,g,\mu)$ admits a positive minimal Green kernel $G\colon M\times M \rightarrow \R_+\cup\{+\infty\}$ such that for any $f\in \cC^\infty_c(M)$, a solution of the equation $\Delta u=f$  is given by
 \begin{equation}\label{eq:sol}
 u(x)=\int_M G(x,y)f(y)d\mu(y).\end{equation}
 \item For any bounded open subset $\cV\subset M$, the Green kernel of the  Dirichlet-Laplace operator on $\cV$ is noted $G^{\cV}\colon \cV\times \cV\rightarrow \R_+\cup\{+\infty\}$. Then we have that
$$G(x,y)=\lim_{\cV\to M} G^{\cV}(x,y)=\sup_\cV G^{\cV}(x,y).$$
\end{enumerate}
\end{prop}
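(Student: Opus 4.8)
The plan is to establish the three items in the order c), b), a): a minimal Green kernel is the basic object, items b)–c) amount to its construction, and a) is the companion functional inequality; the only genuinely nontrivial input will be the classical parabolicity dichotomy of \cite{Ancona,BamsGri}.

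For b) and c) I would fix an exhaustion $\cV_1\subset\cV_2\subset\cdots$ of $M$ by relatively compact open sets with smooth boundary, $\bigcup_k\cV_k=M$. On each $\cV_k$ the bottom of the Dirichlet spectrum is positive (Poincaré inequality on a bounded domain), so the Dirichlet heat kernel $P^{\cV_k}$ is integrable in time and $G^{\cV_k}(x,y)=\int_0^{+\infty}P^{\cV_k}(t,x,y)\,dt$ is a positive, symmetric Green kernel, smooth off the diagonal, with $\Delta_xG^{\cV_k}(\cdot,y)=\delta_y$ and zero boundary values. Extending each $G^{\cV_k}$ by zero, the maximum principle gives $G^{\cV_k}\le G^{\cV_{k+1}}$, so $G:=\sup_kG^{\cV_k}=\lim_kG^{\cV_k}$ is well defined with values in $[0,+\infty]$, which is the formula in c). Off-diagonal finiteness of $G$ is an all-or-nothing property (Harnack's inequality applied to the positive harmonic functions $G^{\cV_k}(\cdot,y)$ on $\cV_k\setminus\{y\}$), and it is exactly equivalent to non-parabolicity by \cite{Ancona,BamsGri}; hence here $G$ is finite and positive off the diagonal, and smooth there by interior regularity. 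For b), let $f\in\cC^\infty_c(M)$ with $f\ge0$; then $u_k(x)=\int_MG^{\cV_k}(x,y)f(y)\,d\mu(y)$ solves $\Delta u_k=f$ in $\cV_k$ with zero boundary values, is nonnegative and nondecreasing in $k$, and increases to $u(x)=\int_MG(x,y)f(y)\,d\mu(y)$ by monotone convergence; interior elliptic estimates, uniform on compact sets since $(u_k)$ is monotone and bounded above by $u$, upgrade this to convergence in $C^\infty_{\mathrm{loc}}$, so $\Delta u=f$, and a general $f$ is handled by splitting $f=f_+-f_-$. Minimality follows from the construction: any positive $\widetilde G(\cdot,y)$ with $\Delta\widetilde G(\cdot,y)=\delta_y$ dominates $G^{\cV_k}(\cdot,y)$ on $\cV_k$ by the maximum principle (same singularity at $y$, and $\widetilde G\ge0=G^{\cV_k}$ on $\partial\cV_k$), so $\widetilde G\ge G$.

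For a) I would first reduce to a single ball. A bounded open set is relatively compact ($M$ being complete), hence covered by finitely many unit balls, so it is enough to prove the inequality for one fixed ball $B_0=B(o,1)$: an arbitrary ball $B(x,1)$ lies, together with $B_0$, inside the connected set $B(o,1+d(o,x))$; enlarging to a relatively compact connected domain $\cW$ with smooth boundary and applying the Poincaré inequality on $\cW$, one bounds $\int_{B(x,1)}\varphi^2\,d\mu$ by $C\int_{\cW}|d\varphi|^2\,d\mu+C'\int_{B_0}\varphi^2\,d\mu$, and the inequality on $B_0$ then transfers, with an $x$-dependent constant, which a) permits. One implication for $B_0$ is immediate: if the inequality holds and $(\chi_\ell)$ were a sequence as in Definition~\ref{def:para}, then $\chi_\ell\ge 1/2$ on $\overline{B_0}$ for large $\ell$, whence $\mu(B_0)/4\le C\int_M|d\chi_\ell|^2\to0$, which is absurd; so the inequality on $B_0$ forces non-parabolicity. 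The converse, non-parabolicity $\Rightarrow$ the inequality on $B_0$, is again part of the dichotomy of \cite{Ancona,BamsGri}.

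The one point I expect to require real work is this last equivalence — the fact that, for a single ball, the inequality in a), the positivity of the capacity $\capa_M(B_0)=\inf\{\int_M|d\varphi|^2\,d\mu:\varphi\in\cC^\infty_c(M),\ \varphi\ge1\text{ on }\overline{B_0}\}$, the off-diagonal finiteness of $G$, and the non-existence of a cutoff sequence as in Definition~\ref{def:para} all coincide. The delicate direction is that the failure of a) actually produces such a cutoff sequence: a compactly supported function of small Dirichlet energy that is close to $1$ in $L^2(B_0)$ is, by a local Poincaré inequality on a bounded domain, close to $1$ in $L^2$ on a fixed neighbourhood of $\overline{B_0}$, and one must then spread this to all of $M$ along an exhaustion — precisely the argument underlying the parabolicity dichotomy of \cite{Ancona,BamsGri}. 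Everything else above — the exhaustion construction of the Green kernel, the maximum-principle comparisons, the interior elliptic regularity, and the covering and chaining in a) — is routine.
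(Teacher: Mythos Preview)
The paper does not actually prove Proposition~\ref{prop:capa}: it introduces it with ``The non parabolicity implies the following properties \cite{Ancona,BamsGri}'' and then states the three items without further argument. Your proposal is therefore not competing with a proof in the paper, but rather supplying the sketch that the paper deliberately outsourced. Your outline is correct and standard: the exhaustion construction of $G$ with monotonicity via the maximum principle, the all-or-nothing finiteness via Harnack, and the equivalence with non-parabolicity from \cite{Ancona,BamsGri} are exactly the ingredients behind b) and c); for a), the covering/chaining reduction to a single ball and the identification of the delicate step (failure of the inequality on $B_0$ producing a parabolic cutoff sequence) are also right, and you correctly attribute that step to the same references. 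In short, your approach and the paper's are the same --- rely on \cite{Ancona,BamsGri} --- except that you spell out the routine parts the paper leaves implicit.
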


The function $G_o(y)=G(o,y)$ is called the Green function with pole at $o\in M$; it is a smooth positive harmonic function on $M\setminus \{o\}.$
\pref{prop:capa}-a) implies that if $\hW(M)$ is the completion of $\cC^\infty_c(M)$ for the norm $\varphi\mapsto \|d\varphi\|_{L_\mu^2}$ then the injection $\cC^\infty_c(M) \hookrightarrow \Wloc(M)$ extends continuous and provides a natural injection $\hW(M) \hookrightarrow \Wloc(M)$. Moreover for any $f\in L^2_\mu$ with compact support there is a unique $u\in\hW(M)$ solving the equation $\Delta u=f$ and $u$ is given by the  formula  \eqref{eq:sol}. 

It also implies that for any non empty bounded open subset $\cU\subset M$, its {\it capacity} 
\begin{equation}\label{def:capa}
 \capa(\cU)=\inf\left\{ \int_M |d\varphi|^2d\mu, \text{ such that } \varphi\in \cC^\infty_c(M)\text{ and } \varphi=1\text{ on } \cU\right\}\end{equation} is strictly positive and there is a unique
$h\in \hW(M)$ such that
$$\capa(\cU)=\int_M |dh|^2d\mu\text{ and } h=1\text{ on } \cU;$$
moreover $0<h\le 1$. This function $h$ is called the {\it equilibrium potential} of  $\cU$, it is a superharmonic function and there is a positive Radon measure  (called the {\it equilibrium measure}) $\nu_{\cU}$ supported in $\cU$ such that 
$$\forall x\in M\colon \,h(x)=\int_M G(x,y)d\nu_\cU(y);$$
the equilibrium measure of $\cU$ is also given by the equality (in the distributional sense)
$$d\nu_\cU=\Delta h.$$
For instance if $\cU$ has smooth boundary and if $\vec{\rm n}\colon\partial \cU\rightarrow TM$ is the inward unit normal vector fields and $\sigma$ the induced measure on $\partial \cU$ then the equilibrium measure of $\cU$ is given by
$$\nu_\cU(K)=\int_{\partial \cU\cap K} \frac{\partial h}{\partial \vec{\rm n}}\, d\sigma.$$ 
We recall:
\begin{defi}A function $f\colon M\rightarrow \R$ is said to be superharmonic if for any non negative $\varphi\in \cC^\infty_c(M)\colon$
$$\int_M f\Delta\varphi \,d\mu\ge 0.$$
\end{defi}
Notice that for any bounded $f \colon M\rightarrow \R$ and $\varphi\in\cC^\infty_c(M)$  we have that 
$$\int_M f\Delta\varphi\,d\mu=\lim_{t\to 0} \int_M f\,\frac{\varphi-e^{-t\Delta}\varphi}{t}\,d\mu=\lim_{t\to 0} \int_M \frac{f-e^{-t\Delta}f}{t}\,\varphi\,d\mu.$$
Hence 
\begin{lem}\label{lem:critsuperh} A positive bounded function $f\colon M\rightarrow \R$ is  superharmonic if and only if for any $t>0\colon$
$$e^{-t\Delta}f\le f.$$
\end{lem}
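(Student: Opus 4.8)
The plan is the following. The \emph{if} implication is immediate from the identity recalled just above the statement: if $e^{-t\Delta}f\le f$ for every $t>0$, then for any non-negative $\varphi\in\cC^\infty_c(M)$ each integral $\int_M\tfrac{1}{t}\,(f-e^{-t\Delta}f)\,\varphi\,d\mu$ is non-negative, and letting $t\to0$ gives $\int_M f\,\Delta\varphi\,d\mu\ge0$, i.e.\ $f$ is superharmonic.

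For the converse I would exhaust $M$ by relatively compact open sets $\cV_1\Subset\cV_2\Subset\cdots$ with $\bigcup_k\cV_k=M$, and use the Dirichlet heat semigroup $e^{-t\Delta_\cV}$ of $\cV=\cV_k$, with kernel $P^\cV$. Since $f$ is bounded, $f\in L^2(\cV)$; for $t>0$ the function $e^{-t\Delta_\cV}f$ is smooth in $\cV$, belongs to $\Wo(\cV)$, and $e^{-t\Delta_\cV}f\to f$ in $L^2(\cV)$ as $t\to0$. Set $u(t,\cdot):=f-e^{-t\Delta_\cV}f$, so that $u(0,\cdot)=0$ and $u-f=-e^{-t\Delta_\cV}f\in\Wo(\cV)$ (encoding that $u=f\ge0$ on $\partial\cV$). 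Because $f$ is time independent and $e^{-t\Delta_\cV}f$ solves the heat equation inside $\cV$, one gets on $(0,\infty)\times\cV$, in the weak sense,
$$\partial_t u+\Delta u=\partial_t f+\Delta f-\big(\partial_t e^{-t\Delta_\cV}f+\Delta e^{-t\Delta_\cV}f\big)=\Delta f\ \ge\ 0,$$
the inequality being the superharmonicity of $f$. Thus $u$ is a weak supersolution of the heat operator with non-negative initial and lateral boundary data, and the weak parabolic maximum principle (comparison with $0$) yields $u\ge0$, i.e.\ $e^{-t\Delta_\cV}f\le f$ a.e.\ on $\cV$. Letting $\cV=\cV_k\nearrow M$, the Dirichlet heat kernels increase to the (minimal) heat kernel of $M$, $P^{\cV_k}(t,x,y)\nearrow P(t,x,y)$, so by monotone convergence (using $f\ge0$)
$$e^{-t\Delta}f(x)=\int_M P(t,x,y)f(y)\,d\mu(y)=\lim_{k\to\infty}\int_{\cV_k}P^{\cV_k}(t,x,y)f(y)\,d\mu(y)=\lim_{k\to\infty}e^{-t\Delta_{\cV_k}}f(x)\ \le\ f(x)$$
for a.e.\ $x\in M$, which is the assertion.

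The delicate point is reconciling the purely \emph{distributional} hypothesis on $f$ with the maximum principle: one must first know that a bounded superharmonic function lies in $\Wloc(M)$ — which follows from a Caccioppoli estimate obtained by testing $\Delta f\ge0$ against $\chi^2\,(f-\sup_{2B}f)$ on balls — and then that the superharmonicity inequality, a priori tested only against $\cC^\infty_c(M)$, extends by density to non-negative compactly supported $W^{1,2}$ test functions; only then is $u$ a genuine weak supersolution, so that the comparison with $0$ (which amounts to testing $\partial_t u+\Delta u\ge0$ against $u_-=\max(-u,0)\in\Wo(\cV)$ and reading off $\int|\nabla u_-|^2\,d\mu\le0$) is legitimate. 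This is standard potential theory of superharmonic functions, but it is where the content sits; the rest is semigroup bookkeeping. An alternative that avoids the time variable is to first prove the elliptic inequality $\lambda(\Delta+\lambda)^{-1}f\le f$ for $\lambda>0$ by the same exhaustion together with the maximum principle for $\Delta+\lambda$, and then recover $e^{-t\Delta}f\le f$ by iterating the positivity-preserving contraction $\tfrac nt\,\big(\Delta+\tfrac nt\big)^{-1}$ and letting $n\to\infty$.
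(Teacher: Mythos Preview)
Your argument is correct. The \emph{if} direction is exactly the paper's: it records the identity
\[
\int_M f\,\Delta\varphi\,d\mu
=\lim_{t\to 0}\int_M \frac{f-e^{-t\Delta}f}{t}\,\varphi\,d\mu
\]
for bounded $f$ and $\varphi\in\cC^\infty_c(M)$, and the lemma is stated immediately after a ``Hence''. For the converse the paper gives no further details; the intended reading is presumably to differentiate $t\mapsto\int_M f\,(e^{-t\Delta}\varphi)\,d\mu$ and use that $e^{-t\Delta}\varphi\ge 0$ is still an admissible (though non compactly supported) test function for the positive Radon measure $\Delta f$, relying on the boundedness of $f$ and completeness of $M$ to justify the approximation.

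Your route for the converse is genuinely different and more explicit: exhaust $M$ by bounded $\cV_k$, run the Dirichlet heat flow, check that $u=f-e^{-t\Delta_{\cV}}f$ is a weak supersolution with non-negative parabolic boundary data, apply the weak maximum principle by testing against $u_-$, and then let $P^{\cV_k}\nearrow P$. This bypasses the delicate passage from compactly supported test functions to $e^{-t\Delta}\varphi$ on the whole manifold; the price is exactly the regularity point you flag (a bounded distributional superharmonic function lies in $\Wloc$, and the inequality $\Delta f\ge 0$ extends to non-negative $W^{1,2}$ test functions), which is indeed standard potential theory. Your observation that $(u)_-\le e^{-t\Delta_\cV}f\in\Wo(\cV)$, hence $(u)_-\in\Wo(\cV)$, is the right way to make the boundary condition rigorous. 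The resolvent alternative you mention at the end would also work and is perhaps the cleanest self-contained version.
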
 
Notice that the result still holds if we assume that  
$$\forall t>0,\forall x\in M\colon\ \int_M P(t,x,y)f(y)\,d\mu(y)<+\infty.$$
\subsection{Some properties of  equilibrium potentials}
\begin{prop} \label{prop:htau}\begin{enumerate}[a)] 
\item For any $o\in M$ and any $\tau>1/2$, the Green function with pole at $o$ satisfies $\min\left(G_o^\tau,1\right)\in \hW(M)$.
\item For any $\tau>1/2$ and any  bounded open set  $\cU\subset M$ with equilibrium potential $h$, we have $h^\tau\in \hW(M)$ moreover
$$\int_M \left|dh^\tau\right|^2d\mu\le \frac{\tau^2}{2\tau-1} \capa(\cU).$$
\end{enumerate}
\end{prop}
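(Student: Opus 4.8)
The plan is to obtain both statements from one computation. A preliminary remark: if $\psi\colon[0,1]\to\R$ is Lipschitz with $\psi(0)=0$, then for any $u$ in $\hW(M)$ (or in $\Wo(\cV)$) with $0\le u\le1$ the composite $\psi\circ u$ again lies in $\hW(M)$ (resp.\ $\Wo(\cV)$) with $d(\psi\circ u)=\psi'(u)\,du$: writing $u=\lim_j\varphi_j$ with $\varphi_j\in\cC^\infty_c$, the functions $\psi\circ\varphi_j$ are Lipschitz with compact support (since $\psi(0)=0$), hence in the energy space, and $d(\psi\circ\varphi_j)=\psi'(\varphi_j)\,d\varphi_j\to\psi'(u)\,du$ in $L^2_\mu$ because $\psi'$ is bounded, $d\varphi_j\to du$ in $L^2_\mu$, and $\psi'$ will be continuous in all the cases below. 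Next, if $h$ is the equilibrium potential of a bounded open set $\cU$ (relative to $M$, or relative to a bounded domain in part a)\,), then $h$ minimizes $\varphi\mapsto\int|d\varphi|^2\,d\mu$ among functions equal to $1$ on $\cU$, so the first variation vanishes: $\int\langle dh,d\zeta\rangle\,d\mu=0$ for every $\zeta$ in the energy space that vanishes on the set $\{h=1\}\supseteq\cU$. Choosing $\zeta=\psi\circ h-\psi(1)\,h$ (it vanishes where $h=1$) yields the identity
\[
\int_M\langle dh,\,d(\psi\circ h)\rangle\,d\mu=\psi(1)\int_M|dh|^2\,d\mu .
\]

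For part b), apply this with $\psi=\psi_\varepsilon$, $\psi_\varepsilon(t)=(t+\varepsilon)^{2\tau-1}-\varepsilon^{2\tau-1}$ (Lipschitz on $[0,1]$ and vanishing at $0$ because $2\tau-1>0$). Since $\psi_\varepsilon'(t)=(2\tau-1)(t+\varepsilon)^{2\tau-2}\ge0$ and $\int_M|dh|^2\,d\mu=\capa(\cU)$,
\[
(2\tau-1)\int_M(h+\varepsilon)^{2\tau-2}|dh|^2\,d\mu=\bigl((1+\varepsilon)^{2\tau-1}-\varepsilon^{2\tau-1}\bigr)\,\capa(\cU)\le(1+\varepsilon)^{2\tau-1}\,\capa(\cU).
\]
Now use $\tilde\psi_\varepsilon(t)=(t+\varepsilon)^{\tau}-\varepsilon^{\tau}$ (also Lipschitz on $[0,1]$, vanishing at $0$): the chain rule gives $|d(\tilde\psi_\varepsilon\circ h)|^2=\tau^2(h+\varepsilon)^{2\tau-2}|dh|^2$, whence
\[
\int_M|d(\tilde\psi_\varepsilon\circ h)|^2\,d\mu\le\frac{\tau^2(1+\varepsilon)^{2\tau-1}}{2\tau-1}\,\capa(\cU),
\]
a bound uniform in $\varepsilon\in(0,1]$. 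As $\varepsilon\to0$ one has $\tilde\psi_\varepsilon\circ h\to h^\tau$ pointwise and boundedly, and $\tau(h+\varepsilon)^{\tau-1}dh\to\tau h^{\tau-1}dh$ almost everywhere (the equilibrium potential satisfies $h>0$); so a weak‑compactness argument in $\hW(M)$ identifies the weak limit, shows $h^\tau\in\hW(M)$ with $dh^\tau=\tau h^{\tau-1}dh$, and lower semicontinuity of the energy gives $\int_M|dh^\tau|^2\,d\mu\le\frac{\tau^2}{2\tau-1}\,\capa(\cU)$.

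For part a), since $\min(G_o,1)$ need not be the equilibrium potential of a bounded set in this generality, I would run the same argument inside an exhaustion. Fix bounded open sets $\cV\uparrow M$ with $o\in\cV$, let $G_o^\cV$ be the Dirichlet Green function of $\cV$ with pole $o$, and set $w_\cV:=\min(G_o^\cV,1)\in\Wo(\cV)$. Then $w_\cV\equiv1$ on the bounded set $\{G_o^\cV>1\}$ and $w_\cV$ is harmonic on $\{G_o^\cV<1\}$, so the discussion above applies with $\cU$ replaced by $\{G_o^\cV>1\}$ and $\capa(\cU)$ replaced by $\int_\cV|dw_\cV|^2\,d\mu$; moreover this quantity equals $1$, since pairing $\Delta_\cV G_o^\cV=\delta_o$ against $w_\cV\in\Wo(\cV)$ (which is $\equiv1$ near $o$) and using $\langle dw_\cV,dG_o^\cV\rangle=|dw_\cV|^2$ a.e.\ gives $\int_\cV|dw_\cV|^2\,d\mu=w_\cV(o)=1$. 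The computation of part b) then produces $w_\cV^\tau\in\Wo(\cV)\subset\hW(M)$ with $\int_M|d(w_\cV^\tau)|^2\,d\mu\le\frac{\tau^2}{2\tau-1}$, uniformly in $\cV$. Finally $G_o^\cV\uparrow G_o$ by \pref{prop:capa}, so $w_\cV^\tau=\min((G_o^\cV)^\tau,1)\uparrow\min(G_o^\tau,1)$ pointwise and boundedly, and weak compactness in $\hW(M)$ yields $\min(G_o^\tau,1)\in\hW(M)$.

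The only genuine difficulty is technical: when $1/2<\tau<1$ the maps $t\mapsto t^\tau$ and $t\mapsto t^{2\tau-1}$ are merely Hölder, not Lipschitz, at the origin, so neither the chain rule nor membership of the composites in the energy space is available directly; this is handled throughout by the regularizations $t\mapsto(t+\varepsilon)^s-\varepsilon^s$ and the passage $\varepsilon\to0$, combining monotone convergence, the strict positivity of $h$ (resp.\ of $G_o$) on the relevant set, and lower semicontinuity of the Dirichlet energy under weak convergence. Verifying that $\zeta=\psi_\varepsilon\circ h-\psi_\varepsilon(1)h$ is an admissible variation (it lies in the energy space and vanishes where $h=1$), justifying the first‑variation identity, and justifying the pairing with $\Delta_\cV G_o^\cV=\delta_o$ are then routine, as is the limit $G_o^\cV\uparrow G_o$.
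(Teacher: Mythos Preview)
Your proof is correct and reaches the sharp constant $\tau^2/(2\tau-1)$, but by a genuinely different route from the paper. The paper works on a smooth exhaustion $\Omega_\ell$, applies the coarea formula
\[
\int_{\{h_\ell<1\}}|dh_\ell^\tau|^2\,d\mu=\tau^2\int_0^1 x^{2\tau-2}\Big(\int_{\{h_\ell=x\}}|dh_\ell|\,d\sigma_x\Big)dx,
\]
and uses the Green formula to see that the flux $\int_{\{h_\ell=x\}}|dh_\ell|\,d\sigma_x$ is constant in $x$ (equal to $\int|dh_\ell|^2$ for the equilibrium potential, and to $1$ for the Green function). This gives the exact identity at each stage, and one passes to the limit in $\ell$. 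Your argument instead exploits the Euler--Lagrange characterisation of $h$: the identity $\int\psi'(h)|dh|^2=\psi(1)\capa(\cU)$ (equivalently, the pairing $\int\langle dh,d\phi\rangle=\int\phi\,d\nu_\cU$ with $\phi=\psi\circ h$ and $\nu_\cU$ supported where $h=1$), applied to the Lipschitz regularisations $\psi_\varepsilon(t)=(t+\varepsilon)^{2\tau-1}-\varepsilon^{2\tau-1}$, followed by $\varepsilon\to0$. This avoids the coarea formula entirely and, for part~b), works directly on $M$ without exhausting. The price is the $\varepsilon$-regularisation and a little potential theory to justify that the first-variation identity holds for test functions of the form $\psi\circ h-\psi(1)h\in\hW(M)$ (one needs that $\nu_\cU$ is concentrated on $\{h=1\}$ and does not charge polar sets, or an equivalent argument); you correctly flag this as the routine step. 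For part~a) both arguments use an exhaustion, and your computation of $\int|dw_\cV|^2=1$ via pairing with $\delta_o$ is the distributional analogue of the paper's flux computation.
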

\proof Let $\Omega_\ell$ an increasing sequence of bounded open subset with smooth boundary exhausting $M$.  For large enough $\ell$, we have that $o\in \Omega_\ell$ and let $g_\ell=G^{\Omega_\ell}(o,\cdot)$.
It is enough to show that there is a constant $C$, independant of $\ell$, such that
$$\int_{\{g_\ell< 1\} } \left|dg_\ell^\tau\right|^2d\mu\le C.$$
Using the coaera formula, we get that
\begin{align*}
\int_{\{g_\ell< 1\} } \left|dg_\ell^\tau\right|^2d\mu&=\tau^2\int_{\{g_\ell< 1\} }g_\ell^{2\tau-2} \left|dg_\ell\right|^2d\mu\\
&=\tau^2\int_0^1 x^{2\tau-2} \left(\int_{ g_\ell=x} \left|dg_\ell\right|d\sigma_x\right)dx
\end{align*}
But for any regular value $x$ of $g_\ell$, the Green formula indicates that the integral  $\int_{ g_\ell=x} \left|dg_\ell\right|d\sigma_x$ does not depends on $x$:
$$0=\int_{y<g_\ell<x} \Delta g_\ell d\mu=\int_{ g_\ell=y} \left|dg_\ell\right|d\sigma_y-\int_{ g_\ell=x} \left|dg_\ell\right|d\sigma_x.$$
Letting $x\to+\infty$ and using the asymptotics of the Green kernel around $o$, we find that 
$$\int_{ g_\ell=y} \left|dg_\ell\right|d\sigma_y=1.$$ 
Hence for any $\tau>1/2$:
$$\int_{\{g_\ell< 1\} } \left|dg_\ell^\tau\right|^2d\mu=\tau^2\int_0^1 x^{2\tau-2} dx=\frac{\tau^2}{2\tau-1}.$$
The proof concerning an equilibrium potential is exactly the same. For $\ell$ large enough, $\bar\cU\subset \Omega_\ell$ and we introduce $h_\ell\in \Wo(\Omega_\ell)$, the equilibrium potential relative to $\Omega_\ell$, it is characterized by 
$$\int_M |dh_\ell |^2d\mu=\inf\left\{ \int_M |d\varphi|^2d\mu, \text{ such that } \varphi\in \cC^\infty_c(\Omega_\ell)\text{ and } \varphi=1\text{ on } \cU\right\}.$$
Then $h_\ell$ is harmonic on
$\Omega_\ell\setminus \bar \cU$ and $\Omega_\ell\setminus \bar \cU=h_\ell^{-1}\left( (0,1)\right)$.  Similarly for every regular value $x\in (0,1)$ of $h_\ell$, the integral
$\int_{ h_\ell=x} \left|dh_\ell\right|d\sigma_x$ does not depend on $x$ and
$$\int_M |dh_\ell |^2d\mu=\int_0^1  \left(\int_{ h_\ell=x} \left|dh_\ell\right|d\sigma_x\right)dx.$$
So that for a.e. $x\in (0,1)$:
$$\int_{ h_\ell=x} \left|dh_\ell\right|d\sigma_x=\int_M |dh_\ell |^2d\mu.$$
and if $\tau>1/2$ then

$$\int_M |dh^\tau_\ell |^2d\mu=\tau^2\int_0^1 x^{2\tau-2} \left(\int_{ h_\ell=x} \left|dh_\ell\right|d\sigma_x\right)dx=\frac{\tau^2}{2\tau-1}\int_M |dh_\ell |^2d\mu.$$
It is easy to show that the sequence $\left(h_\ell\right)_\ell$ converges to $h$ in $\hW(M)$ and this estimate implies that for $\tau>1/2$, $h^\tau_\ell$ converges weakly to $h^\tau$ in $\hW(M)$; hence the result.
\endproof
\begin{prop}\label{prop:superh} For any $\tau\in [0,1]$ and any $o\in M$; $G_o^\tau$ is superharmonic. Similarly for any $\tau\in [0,1]$ and  any bounded open subset $\cU$ with  equilibrium potential $h$, $h^\tau$ is superharmonic.\end{prop}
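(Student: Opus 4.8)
The plan is to derive both assertions from the heat-semigroup characterisation of superharmonicity (Lemma~\ref{lem:critsuperh} together with the observation recorded right after its statement), combined with Jensen's inequality applied to the heat kernels $P(t,x,\cdot)$. This route avoids any chain-rule computation of the type $\Delta(f^\tau)=\dots$, which would be illegitimate since $G_o$ and $h$ are only superharmonic and not smooth. For $\tau=0$ both $G_o^0$ and $h^0$ are constant, hence harmonic; and for $\tau=1$ we only need that $G_o$ and $h$ are themselves superharmonic. So it suffices to treat $\tau\in(0,1)$, and throughout we write $\phi(s)=s^\tau$, a concave, nondecreasing function on $[0,\infty)$ that vanishes at $0$.

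First I would record an elementary form of Jensen's inequality adapted to a possibly non-conservative heat semigroup. Fix $t>0$ and $x\in M$, and set $m=m(t,x):=\left(e^{-t\Delta}\un\right)(x)=\int_M P(t,x,y)\,d\mu(y)\le 1$. Then for every measurable $u\ge 0$ with $\int_M P(t,x,y)\,u(y)\,d\mu(y)<\infty$,
\[\int_M P(t,x,y)\,\phi\big(u(y)\big)\,d\mu(y)\ \le\ \phi\!\left(\int_M P(t,x,y)\,u(y)\,d\mu(y)\right).\]
Indeed, if $m=0$ this is trivial; if $m\in(0,1]$, normalising $P(t,x,\cdot)\,d\mu$ to a probability measure and applying the usual Jensen inequality bounds the left-hand side by $m\,\phi\!\left(\tfrac1m\int_M P(t,x,y)u(y)\,d\mu(y)\right)$, and concavity together with $\phi(0)\ge 0$ gives $m\,\phi(y/m)\le\phi(y)$ for all $y\ge 0$. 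Consequently, whenever $\left(e^{-t\Delta}u\right)(x)<\infty$,
\[\left(e^{-t\Delta}u^\tau\right)(x)\ \le\ \Big(\big(e^{-t\Delta}u\big)(x)\Big)^{\tau}<\infty.\]

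It then remains to check, for $u=h$ and for $u=G_o$, that $e^{-t\Delta}u$ is finite everywhere and satisfies $e^{-t\Delta}u\le u$; granting this, the last display and the monotonicity of $s\mapsto s^\tau$ give $e^{-t\Delta}(u^\tau)\le u^\tau$ with $e^{-t\Delta}(u^\tau)$ finite, and Lemma~\ref{lem:critsuperh}, in the form valid for not necessarily bounded positive functions, yields that $u^\tau$ is superharmonic. For the equilibrium potential $h$ this is immediate, since $0<h\le 1$ is bounded and superharmonic, so $e^{-t\Delta}h\le h$ by Lemma~\ref{lem:critsuperh}. For $u=G_o$, writing $G_o(y)=\int_0^\infty P(s,o,y)\,ds$ and using the semigroup property gives $\left(e^{-t\Delta}G_o\right)(x)=\int_t^\infty P(s,x,o)\,ds\le G_o(x)$; and this is finite for every $x$ and $t>0$, being bounded by $G_o(x)<\infty$ when $x\neq o$, while $\int_t^\infty P(s,o,o)\,ds<\infty$ by continuity of $s\mapsto P(s,o,o)$ on $(0,\infty)$ and non-parabolicity of $(M,g,\mu)$ (equivalently, $\int_1^\infty P(s,o,o)\,ds<\infty$). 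Incidentally this also reproves that $G_o$ is superharmonic, covering the case $\tau=1$.

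The only genuinely delicate points are the two just isolated: one cannot differentiate the non-smooth functions $h$ and $G_o$, so one must argue through the semigroup and the $\phi(0)\ge 0$ version of Jensen — which is what makes the non-conservativity harmless, since only $m(t,x)\le 1$ is available — and the finiteness of $e^{-t\Delta}G_o$ at the pole $o$, which is precisely an avatar of the non-parabolicity hypothesis. Everything else is routine.
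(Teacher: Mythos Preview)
Your proof is correct and follows essentially the same route as the paper: both establish $e^{-t\Delta}G_o\le G_o$ from the identity $e^{-t\Delta}G_o=\int_t^\infty P(s,o,\cdot)\,ds$, then use a concavity/H\"older argument together with $e^{-t\Delta}\un\le\un$ to deduce $e^{-t\Delta}(G_o^\tau)\le(e^{-t\Delta}G_o)^\tau\le G_o^\tau$, and treat $h$ identically. Your Jensen formulation with the normalisation $m\,\phi(y/m)\le\phi(y)$ is exactly the H\"older step in disguise; you are simply more explicit about the non-conservativity issue and about the finiteness of $e^{-t\Delta}G_o$ at the pole, which the paper leaves implicit.
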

\proof
We know that
$G_o(x)=\int_0^\infty P(\tau,o,x)d\tau$ hence for any $t>0\colon$
$$\left(e^{-t\Delta}G_o\right)(x)=\int_t^\infty P(\tau,o,x)d\tau\le G_o(x).$$ Hence $G_o$ is superharmonic then using H\"older inequality and that 
$e^{-t\Delta}\un_M\le\un_M$, we have for any $\tau\in [0,1]\colon$ 
$$e^{-t\Delta}G^\tau_o\le \left(e^{-t\Delta}G_o\right)^\tau\le G_0^\tau.$$ Hence $G_o^\tau$ is also superharmonic. The proof of the second assertion is identical.
\endproof
\begin{rem} Using Jensen inequality, we can show that for any superharmonic  function $h$ and any concave function $\Phi$, then $\Phi(h)$ is superharmonic.
\end{rem}
\subsection{Weighted Hardy type inequalities}The next result is a consequence of the argumentation presented in \cite{Hardy}.
\begin{prop} If $h\colon M\rightarrow \R_+$ is the equilibrium potential of some bounded open set, then for any $\delta<1$ then  the following general  Hardy type inequality holds:
\begin{equation}\label{hardyh}\forall \varphi\in \cC^\infty_c(M)\colon  \left(\frac{\delta-1}{2}\right)^2\, \int_M \frac{|dh|^2}{h^2} \,\varphi^2 \, h^\delta d\mu\le  \int_M |d\varphi|^2 \, h^\delta d\mu.\end{equation}
\end{prop}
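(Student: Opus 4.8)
The plan is to derive the weighted Hardy inequality \eqref{hardyh} from a pointwise integration-by-parts identity, exploiting the superharmonicity of $h$ established in \pref{prop:superh} (for $\delta\in(-1,1]$) together with the fact that $h$ is harmonic away from the open set it is the equilibrium potential of. First I would record the key distributional inequality: since $h$ is the equilibrium potential of a bounded open set $\cU$, we have $\Delta h = d\nu_\cU \ge 0$ as a measure, and $h$ is smooth and positive on $M\setminus\bar\cU$ with $\Delta h=0$ there; moreover $0<h\le 1$. I want to test the identity $d^*_\mu(h^{\delta-1}\, dh\, \varphi^2) $ against $1$, or equivalently integrate $\langle dh, d(h^{\delta-1}\varphi^2)\rangle_g$ over $M$, and compare with $\int \varphi^2 h^{\delta-1}\,d\nu_\cU \ge 0$. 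This is the standard "ground state substitution" / Agmon trick: write $\varphi = h^{(1-\delta)/2} \psi$ so that the weight $h^\delta$ and the Hardy potential $|dh|^2/h^2$ combine into a perfect square.

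Concretely, the key step is the algebraic identity, valid wherever $h>0$ is smooth,
\begin{equation*}
|d\varphi|^2 \, h^\delta = \left| h^{(\delta-1)/2} d\psi + \tfrac{\delta-1}{2} h^{(\delta-3)/2}\psi\, dh \right|^2 h^{?}
\end{equation*}
— more cleanly, setting $\varphi = h^{(1-\delta)/2}\psi$ one computes
$$d\varphi = \tfrac{1-\delta}{2} h^{-(1+\delta)/2}\psi\, dh + h^{(1-\delta)/2} d\psi,$$
so that
$$|d\varphi|^2 h^\delta = \left(\tfrac{1-\delta}{2}\right)^2 \frac{|dh|^2}{h^2}\psi^2 h^{1-\delta}\cdot h^{\delta-1}\cdot h^{\delta} \;+\; \text{(cross term)} \;+\; h^{?}|d\psi|^2.$$
Let me instead organize it as: multiply out $\int_M |d\varphi|^2 h^\delta\,d\mu$ with $\varphi = h^{(1-\delta)/2}\psi$, $\psi\in\cC^\infty_c(M)$ (note $\varphi\in\hW(M)$ by \pref{prop:htau} since $(1-\delta)/2>0$, and $\varphi$ is bounded with the right support, so this substitution is legitimate after a density/truncation argument). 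The cross term is $\tfrac{1-\delta}{2}\int_M \langle dh, d(\psi^2)\rangle_g\, h^{-\delta}\cdot h^{\delta}\,d\mu \cdot(\tfrac12?)$ — i.e. a multiple of $\int_M \langle dh, d(\psi^2) \rangle_g\,d\mu$ with no weight left (the powers of $h$ cancel). By the definition of the equilibrium measure this equals $(1-\delta)\int_M \psi^2\, d\nu_\cU \ge 0$ — wait, one must be careful: $\int_M\langle dh, d(\psi^2)\rangle_g\,d\mu = \int_M \psi^2\,\Delta h\,d\mu = \int_M \psi^2\,d\nu_\cU$, using the Green formula and $\psi^2\in\cC^\infty_c(M)$. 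So the cross term has a favorable sign precisely when $\delta<1$, and discarding it gives
$$\int_M |d\varphi|^2 h^\delta\,d\mu \;\ge\; \left(\tfrac{1-\delta}{2}\right)^2 \int_M \frac{|dh|^2}{h^2}\psi^2 h^{1-\delta}\cdot(\text{powers})\,d\mu + \int_M h^{1-\delta}|d\psi|^2\cdot h^\delta \,d\mu,$$
and undoing the substitution $\psi = h^{(\delta-1)/2}\varphi$ in the first term (which produces exactly $\varphi^2 h^\delta$) yields \eqref{hardyh}, dropping the last nonnegative term.

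I expect the main obstacle to be the justification of the substitution $\varphi = h^{(1-\delta)/2}\psi$ and the integration by parts, since $h$ may vanish nowhere but $dh/h$ is only $L^2_{h^\delta\mu}$-controlled a priori, $h$ is merely superharmonic (not smooth) across $\partial\cU$, and one needs $\psi^2$ to be an admissible test function against the measure $\Delta h$. The clean way around this is to approximate: work first with $h_\ell$, the equilibrium potential relative to an exhausting sequence $\Omega_\ell$ as in the proof of \pref{prop:htau}, which is smooth on $\Omega_\ell\setminus\bar\cU$, harmonic there, and locally Lipschitz; run the identity on $\Omega_\ell$ with $\psi\in\cC^\infty_c(M)$ fixed, using that $h_\ell=1$ on $\cU$ so $dh_\ell$ is supported in the region where $h_\ell$ is harmonic and the measure $\Delta h_\ell$ is the boundary term; then pass to the limit $\ell\to\infty$ using $h_\ell\to h$ in $\hW(M)$ and the uniform bounds from \pref{prop:htau}, together with Fatou on the left-hand side. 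Alternatively, as the proposition says, one can simply cite \cite{Hardy} where this "argumentation" is carried out; I would present the one-line substitution computation and then say the rigorous justification is exactly as in that reference, via the approximation by $h_\ell$ just described.
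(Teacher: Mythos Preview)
Your approach is correct and is essentially the same as the paper's: the paper also performs the substitution $\varphi = h^{(1-\delta)/2}\phi$ with $\phi\in\cC^\infty_c(M)$, expands $|d\varphi|^2 h^\delta$, and observes that the cross term $\tfrac{1-\delta}{2}\int_M\langle dh,d\phi^2\rangle\,d\mu$ is nonnegative by superharmonicity of $h$ (for $\delta<1$), then concludes by approximation. Your worry about rigor is more elaborate than necessary---since $h\in\hW(M)$ with $0<h\le1$, the pointwise identity holds a.e.\ and the cross term is well-defined and nonnegative directly from the definition of superharmonicity, so the paper dispenses with the $h_\ell$ approximation and simply notes that the inequality then extends from functions of the form $h^{(1-\delta)/2}\phi$ to all of $\cC^\infty_c(M)$ (indeed $\Wloc$ with compact support) by density.
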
 
\proof Let  $\phi\in \cC^\infty_c(M)$ and define $\varphi=h^{\frac{1-\delta}{2}}\phi$, we compute:
$$|d\varphi|^2=h^{1-\delta}|d\phi|^2+\left(\frac{\delta-1}{2}\right)^2\frac{|dh|^2}{h^2} \, \varphi^2+(1-\delta)\la dh,d\phi\ra\ h^{-\delta}\phi.$$
Then we get 
$$\int_M |d\varphi|^2 \, h^\delta d\mu\ge  \left(\frac{\delta-1}{2}\right)^2\int_M \frac{|dh|^2}{h^2} \, \varphi^2h^\delta d\mu+\frac{1-\delta}{2}\int_M\la dh,d\phi^2\ra\,d\mu.$$
But $h$ is superharmonic hence if $\delta<1$ we get that
$$\int_M |d\varphi|^2 \, h^\delta d\mu\ge  \left(\frac{\delta-1}{2}\right)^2\int_M \frac{|dh|^2}{h^2} \, \varphi^2h^\delta d\mu.$$
Then by approximation, we get that this  weighted Hardy inequality is valid for any $\varphi\in\Wloc(M)$ with compact support.
\endproof
\subsection{Weighted $L^2$-boundedness of the Hodge projector.}
Recall that the space of $L^2$ $1-$forms has the following {\it Hodge}  orthogonal decomposition:
$$L^2_\mu(T^*M)=\cH^1(M,\mu)\oplus \overline{d\cC^\infty_c(M)}\oplus \overline{d^*_\mu\cC^\infty_c(\Lambda^2T^*M)},$$
where the closure are taken with respect to the $L^2_\mu$ topology; the operator $$d_\mu^*\colon \cC^\infty_c(\Lambda^2T^*M)\rightarrow \cC^\infty_c(T^*M)$$ is the formal adjoint of exterior differential
operator $d\colon \cC^\infty_c(T^*M)\rightarrow \cC^\infty_c(\Lambda^2T^*M):$
$$\forall \alpha\in \cC^\infty_c(T^*M),\beta\in \cC^\infty_c(\Lambda^2T^*M):\ 
\int_M \la d\alpha,\beta\ra\, d\mu=\int_M \la \alpha,d_\mu^*\beta\ra\, d\mu;$$
and $$\cH^1(M,\mu)=\left\{ \alpha\in L^2_\mu(T^*M), d\alpha=0\text{ and }d_\mu^*\alpha=0\right\}.$$
The Hodge projector $\Pi\colon L^2_\mu(T^*M)\rightarrow L^2_\mu(T^*M)$ is the $L^2_\mu$-projector on $ \overline{d\cC^\infty_c(M)}$. As we assumed that $(M,g,\mu)$ is non parabolic for any
$\alpha\in \cC^\infty_c(T^*M)$, we have
$$\Pi\alpha=d\varphi$$ where
$\varphi$ solves the equation $\Delta \varphi=d^*_\mu\alpha$ that is to say:
$$\varphi(x)=\int_M G(x,y)d^*_\mu\alpha(y)d\mu(y)=\int_M \la d_yG(x,y),\alpha(y)\ra\,d\mu(y).$$

\begin{thm}\label{thm:L2Whodge}
If  $h\colon M\rightarrow \R_+$ is an equilibrium potential and $\delta\in (-1,1)$ then for any $\beta \in \cC^\infty_c(T^*M)\colon$
$$\int_M \left|\Pi(\beta)\right|^2 \, h^\delta d\mu\le \left(\frac{1+|\delta|}{1-|\delta|}\right)^2\  \int_M \left|\beta\right|^2 \, h^\delta d\mu.$$
In particular $\Pi\colon \cC^\infty_c(T^*M)\rightarrow L^2_{h^\delta\mu}(T^*M)$ has a bounded extension
$\Pi\colon L^2_{h^\delta\mu}(T^*M) \rightarrow L^2_{h^\delta\mu}(T^*M)$.
\end{thm}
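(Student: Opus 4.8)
Strategy. I will test the equation defining $u:=\Delta^{-1}d^*_\mu\beta$ (so that $\Pi\beta=du$) against the function $\zeta=uh^\delta$, and use that $h^\delta$ is superharmonic (\pref{prop:superh}) to discard the only delicate term; the weighted Hardy inequality \eqref{hardyh} then closes the estimate with the sharp constant. Since $\Pi$ is self--adjoint on $L^2_\mu(T^*M)$, a duality argument reduces matters to $\delta\in[0,1)$: once we know $\Pi$ is bounded on $L^2_{h^{\delta'}\mu}(T^*M)$ with norm $N$ for a given $\delta'\in(0,1)$ (which the inequality on $\cC^\infty_c(T^*M)$ yields, $\cC^\infty_c(T^*M)$ being dense there), its adjoint for the $L^2_{h^{\delta'}\mu}$--inner product is the map $g\mapsto h^{-\delta'}\Pi(h^{\delta'}g)$ because $\Pi^{*_\mu}=\Pi$; it has the same norm $N$, and after the substitution $\phi=h^{\delta'}g$ this says exactly that $\Pi$ is bounded on $L^2_{h^{-\delta'}\mu}(T^*M)$ with norm $\le N$. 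Thus the case $\delta\in(-1,0)$ follows from the case $-\delta\in(0,1)$, and $\delta=0$ is just $\|\Pi\|_{L^2_\mu}\le1$. So fix $\delta\in(0,1)$; then $0<h^\delta\le1$, $h^\delta\in\Wloc(M)$, and $h^\delta$ is superharmonic.

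Preliminaries (all justified by approximation in $\hW(M)$). For $\beta\in\cC^\infty_c(T^*M)$ set $u=\Delta^{-1}d^*_\mu\beta\in\hW(M)$, so $\Pi\beta=du$, and $I=\int_M|du|^2h^\delta\,d\mu$, $J=\int_M|\beta|^2h^\delta\,d\mu$; both are finite since $h^\delta\le1$ and $\|du\|_{L^2_\mu}=\|\Pi\beta\|_{L^2_\mu}\le\|\beta\|_{L^2_\mu}$. First, \eqref{hardyh} extends from $\cC^\infty_c(M)$ to every $v\in\hW(M)$: approximating $v$ by $v_j\in\cC^\infty_c(M)$ in $\hW(M)$ one has $\int|dv_j|^2h^\delta\to\int|dv|^2h^\delta$ (as $0\le h^\delta\le1$), while Fatou's lemma bounds $\int\frac{|dh|^2}{h^2}v^2h^\delta$ below by $\liminf_j\int\frac{|dh|^2}{h^2}v_j^2h^\delta$, giving $\big(\tfrac{1-\delta}{2}\big)^2\int_M\frac{|dh|^2}{h^2}v^2h^\delta\,d\mu\le\int_M|dv|^2h^\delta\,d\mu$; in particular $H:=\int_M\frac{|dh|^2}{h^2}u^2h^\delta\,d\mu\le\frac{4}{(1-\delta)^2}I<\infty$. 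Next, since $d(uh^\delta)=h^\delta\,du+\delta\,uh^{\delta-1}dh$ and $h^{2\delta}\le h^\delta$, this Hardy bound (applied to $u_j-u$) shows that $u_jh^\delta\to uh^\delta$ in $\hW(M)$ whenever $u_j\to u$ in $\hW(M)$ with $u_j\in\cC^\infty_c(M)$; as each $u_jh^\delta$ is a compactly supported $W^{1,2}$ function, $uh^\delta\in\hW(M)$ and is an admissible test function. Finally, the superharmonicity of $h^\delta$ tested against $u^2$ reads $\int_M\langle d(h^\delta),d(u^2)\rangle\,d\mu\ge0$; this holds for $u\in\cC^\infty_c(M)$ by the definition of superharmonicity (using $h^\delta\in\Wloc(M)$), and passes to $u\in\hW(M)$ by the same approximation.

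Main estimate. Testing $\int_M\langle du,d\zeta\rangle\,d\mu=\int_M\langle\beta,d\zeta\rangle\,d\mu$ with $\zeta=uh^\delta$, using $d(uh^\delta)=h^\delta\,du+u\,d(h^\delta)$ together with $u\langle du,d(h^\delta)\rangle=\tfrac12\langle d(u^2),d(h^\delta)\rangle$, yields
$$I+\tfrac12\int_M\langle d(u^2),d(h^\delta)\rangle\,d\mu=\int_M h^\delta\langle\beta,du\rangle\,d\mu+\int_M u\langle\beta,d(h^\delta)\rangle\,d\mu.$$
The extra term on the left is $\ge0$, so by Cauchy--Schwarz (with the weight $h^\delta$) and $d(h^\delta)=\delta h^{\delta-1}dh$,
$$I\le\Big(\int_M h^\delta|\beta|^2\Big)^{1/2}\Big(\int_M h^\delta|du|^2\Big)^{1/2}+\delta\Big(\int_M u^2h^{\delta-2}|dh|^2\Big)^{1/2}\Big(\int_M h^\delta|\beta|^2\Big)^{1/2}=J^{1/2}I^{1/2}+\delta\,H^{1/2}J^{1/2}.$$
Inserting $H^{1/2}\le\frac{2}{1-\delta}I^{1/2}$ gives $I\le\big(1+\tfrac{2\delta}{1-\delta}\big)I^{1/2}J^{1/2}=\tfrac{1+\delta}{1-\delta}I^{1/2}J^{1/2}$, hence $I\le\big(\tfrac{1+\delta}{1-\delta}\big)^2J$. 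Combined with the duality reduction of the first paragraph, this proves the displayed inequality for every $\delta\in(-1,1)$ with constant $\big(\tfrac{1+|\delta|}{1-|\delta|}\big)^2$; the final assertion follows since $\cC^\infty_c(T^*M)$ is dense in $L^2_{h^\delta\mu}(T^*M)$, the weight being locally bounded above and below by positive constants.

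What is really at stake. The decisive point is that testing against $uh^\delta$ produces the cross term $\tfrac12\int\langle d(u^2),d(h^\delta)\rangle$ with a favourable sign exactly when $h^\delta$ is superharmonic, i.e. for $\delta\in(0,1)$; this — rather than a more elaborate absorption of that term — is what makes the whole range $|\delta|<1$ available and yields the constant $\big(\tfrac{1+|\delta|}{1-|\delta|}\big)^2$. I expect the only real friction to be the rigorous justification of the integration by parts: that $uh^\delta\in\hW(M)$ and that \eqref{hardyh} and the superharmonicity inequality may be applied to the non--compactly supported $u$ and $u^2$. This is handled by $\hW(M)$--approximation and the bound $h^\delta\le1$, which is precisely why the argument is run for $\delta\ge0$ and the range $\delta<0$ is left to duality.
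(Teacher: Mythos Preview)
Your proof is correct and follows essentially the same route as the paper: the same duality reduction to $\delta\in(0,1)$, the same test function $uh^\delta$ (the paper's $\varphi h^\delta$), the same use of the superharmonicity of $h^\delta$ to give the favourable sign on the cross term, and the same closing via Cauchy--Schwarz and the weighted Hardy inequality \eqref{hardyh}. The paper's presentation differs only cosmetically, packaging the superharmonicity step as the inequality $\int|d\varphi|^2h^\delta\,d\mu\le\int\varphi\,\Delta\varphi\,h^\delta\,d\mu$ and carrying out the $\hW$--approximation at that stage rather than in a separate preliminary block.
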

\proof
As $\Pi$ is bounded and selfadjoint on $ L^2_\mu(T^*M)$, by duality it is enough to show the result for $\delta\in (0,1).$ So let $\delta\in (0,1)$ and $\beta\in \cC^\infty_c(T^*M)$ and let
$\varphi\in \hW(M)$ be the solution of the equation $\Delta \varphi=d_\mu^*\beta$. 
We first show that 
\begin{equation}\label{eq:extsh} \int_M |d\varphi|^2\,h^\delta d\mu\le \int_M \varphi\Delta\varphi\,h^\delta d\mu.\end{equation}

Consider a sequence $(\varphi_\ell)$ made of smooth compactly supported functions that converges toward $\varphi$ in $\hW(M)$. Because $h^\delta$  is superharmonic, we get that for all $\ell$
$$\int_M h^\delta \Delta\varphi_\ell^2\, d\mu\ge 0.$$
But $\frac12\Delta \varphi_\ell^2=\varphi_\ell\Delta\varphi_\ell-\left|d\varphi_\ell\right|^2$ hence
 \begin{equation}\label{eq:extsh1}\int_M |d\varphi_\ell|^2\,h^\delta d\mu\le \int_M \varphi_\ell\Delta\varphi_\ell\,h^\delta d\mu.\end{equation}

As $h^\delta$ is a bounded positive function, we get that 
 \begin{equation}\label{eq:extsh2}\int_M |d\varphi|^2\,h^\delta d\mu=\lim_\ell\int_M |d\varphi_\ell|^2\,h^\delta d\mu.\end{equation}
Using
\begin{enumerate}[i)]
\item $\displaystyle d\left(\varphi_\ell h^\delta\right)=h^\delta d\varphi_\ell+\delta \frac{dh}{h} \varphi_\ell h^\delta;$
\item the Hardy inequality \eqref{hardyh}:
$$\forall \phi\in \cC^\infty_c(M)\colon  \, \int_M  \frac{|dh|^2}{h^2} \,\phi^2 \, d\mu\le  4\int_M |d\phi|^2 \,   d\mu;$$ 
\item $h$ is a  positive function and takes value in $(0,1]$,
\end{enumerate}
we deduce that for any $\ell,k$:
$$\left\|d\left(\varphi_\ell h^\delta-\varphi_k h^\delta\right)\right\|^2_{L^2_\mu}\le \left(8\delta+2\right) \left\|d\left(\varphi_\ell -\varphi_k \right)\right\|^2_{L^2_\mu}.$$ hence $h^\delta\varphi\in \hW(M)$ and  
 $$\lim_{\ell \to +\infty} \left\|d\left(\varphi_\ell h^\delta-\varphi h^\delta\right)\right\|_{L^2_\mu}=0.$$ So that we get: 
 $$\lim_\ell \int_M \varphi_\ell\Delta\varphi_\ell\,h^\delta d\mu=\lim_\ell \int_M \la d\varphi_\ell, d\left(\varphi_\ell\,h^\delta\right)\ra d\mu= \int_M \la d\varphi, d\left(\varphi\,h^\delta\right)\ra d\mu.$$
 But by definition we know that 
 $$\forall \phi\in \hW(M)\colon \int_M \la d\varphi,d\phi\ra\, d\mu=\int_M d_\mu^*\beta\, \phi\, d\mu.$$
 Hence 
 \begin{equation}\label{eq:extsh3}\lim_\ell \int_M \varphi_\ell\Delta\varphi_\ell\,h^\delta d\mu=\int_M d_\mu^*\beta\, \left(\varphi\,h^\delta\right)\, d\mu=\int_M \Delta\varphi\, \left(\varphi\,h^\delta\right)\, d\mu.\end{equation}

The inequalities \eqref{eq:extsh1}, \eqref{eq:extsh2} \eqref{eq:extsh3} implies the inequalities \eqref{eq:extsh}. And using $\Delta\varphi= d_\mu^*\beta$, we deduce that
$$\int_M |d\varphi|^2\,h^\delta d\mu\le \int_M d_\mu^*\beta\,\varphi\,h^\delta d\mu.$$
Integrating by parts, we get that
$$ \int_M d_\mu^*\beta\,\varphi\,h^\delta d\mu= \int_M\la\beta,d\varphi\ra\,h^\delta d\mu+ \delta\int_M \la \beta,dh\ra \varphi\,h^{\delta-1}\, d\mu.$$
Using the Cauchy-Schwartz inequality, we obtain 
$$\left|\int_M\la\beta,d\varphi\ra\,h^\delta d\mu\right|\le \left(\int_M |\beta|^2 \,h^\delta d\mu\right)^{\frac12}\left(\int_M|d\varphi|^2 \,h^\delta d\mu\right)^{\frac12}.$$
And  the weighted Hardy inequality \eqref{hardyh} implies that 
\begin{align*}
\left|\int_M \la \beta,dh\ra \varphi\,h^{\delta-1}\, d\mu\right|&\le\left(\int_M |\beta|^2 \,h^\delta d\mu\right)^{\frac12}\left(\int_M \frac{|dh|^2}{h^2} |\varphi|^2 \,h^\delta d\mu\right)^{\frac12}\\
&\le  \frac{2}{1-\delta}\left(\int_M |\beta|^2 \,h^\delta d\mu\right)^{\frac12}\left(\int_M|d\varphi|^2 \,h^\delta d\mu\right)^{\frac12}.
\end{align*}
So that we eventually obtain
$$\int_M |d\varphi|^2\,h^\delta d\mu\le\left(1+\frac{2\delta}{1-\delta}\right) \left(\int_M |\beta|^2 \,h^\delta d\mu\right)^{\frac12}\left(\int_M|d\varphi|^2 \,h^\delta d\mu\right)^{\frac12}.$$
\endproof
\section{Trace type inequalities}
\subsection{}
In  this section, we collect some general facts about the validity of trace inequality of the type
\begin{equation}\label{trace1}
\forall \varphi\in \cC^\infty_c(M)\colon \ \int_M q\varphi^2\, d\mu\le C_1\int_M |d\varphi|^2\,d\mu,\end{equation}
where $q$ is a non negative locally integrable function. In order to have the existence of at least one non trivial potential $q$ such that \eqref{trace1} holds, we must suppose that $(M,g,\mu)$ is non parabolic and in this case using equality :
$$\forall \varphi\in \cC^\infty_c(M)\colon\ \int_M |d\varphi|^2\,d\mu=\la\varphi,\Delta\varphi\ra_{L^2_\mu}=\ \int_M |\Delta^{\frac12}\varphi|^2\,d\mu$$
we know that the operator $\Delta^{\frac12}$ defined with the spectral theorem extends to an isometry
$$\Delta^{\frac12}\colon \hW(M)\rightarrow L^2_\mu(M),$$ with inverse $\Delta^{-\frac12}\colon  L^2_\mu(M)\rightarrow \hW(M).$ Hence the above condition \eqref{trace1} is equivalent to the fact that $\sqrt{q}\Delta^{-\frac12}$ (or its adjoint $\Delta^{-\frac12}\sqrt{q}$) has a bounded extension on  $L^2_\mu(M)$ with operator norm satisfying
$$\left\|\sqrt{q}\Delta^{-\frac12}\right\|^2\le C_1.$$

\subsection{A general result}
The following result is well known in the Euclidean setting and is mostly due to Maz'ya and Vertbisky \cite{Mazya,MVarkiv}, however it is folklore that the proofs can be easily adapted in a much more general setting.
\begin{thm}\label{thm:trace} Let $(M,g,\mu)$ be a complete non-parabolic weighted Riemannian manifold and $q$ be a non negative, locally integrable function then the following properties are equivalents
\begin{enumerate}[i)]
\item there is a constant $C_1$ such that \eqref{trace1} holds;
\item the operator $\sqrt{q}\Delta^{-\frac12}$ has a bounded extension $\sqrt{q}\Delta^{-\frac12}\colon L^2_\mu(M)\rightarrow L^2_\mu(M);$
\item there is a constant $C_3$ such that for any bounded open set $\cU\subset M\colon$
\begin{equation}\label{trace3}\int_{\cU} qd\mu\le C_3 \capa(\cU);\end{equation}
\item there is a constant $C_4$ such that for any bounded open set $\cU\subset M\colon$
\begin{equation}\label{trace4}\int_M \left|\Delta^{-\frac12}\left(q\un_{\cU}\right)\right|^2d\mu\le C_4\, \int_{\cU} q\,d\mu;\end{equation}
\item there is a constant $C_5$ such that for any bounded open set $\cU\subset M\colon$
\begin{equation}\label{trace5}\int_M \left|\Delta^{-\frac12}\left(q\un_{\cU}\right)\right|^2d\mu\le C_5^2\, \capa(\cU).\end{equation}
\end{enumerate}
Moreover  $\left\|\sqrt{q}\Delta^{-\frac12}\right\|^2$ and $C_1,C_3,C_4,C_5$ are mutually controlled.
\end{thm}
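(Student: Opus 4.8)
The plan is to prove the cycle of implications $i)\Rightarrow iii)\Rightarrow v)\Leftrightarrow iv)\Rightarrow i)$, together with the equivalence $i)\Leftrightarrow ii)$ which is essentially the content of the duality discussion preceding the statement; throughout, one keeps track of the constants so that the final quantitative claim follows automatically from the chain. The equivalence $i)\Leftrightarrow ii)$ is immediate: as already noted, $\Delta^{1/2}\colon\hW(M)\to L^2_\mu(M)$ is an isometry, so \eqref{trace1} says exactly that $\varphi\mapsto\|\sqrt q\,\varphi\|_{L^2_\mu}$ is bounded by $\|\Delta^{1/2}\varphi\|_{L^2_\mu}$ on the dense subspace $\cC^\infty_c(M)\subset\hW(M)$, i.e. that $\sqrt q\,\Delta^{-1/2}$ is $L^2_\mu$-bounded with the same norm, and $C_1=\|\sqrt q\,\Delta^{-1/2}\|^2$.

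For $i)\Rightarrow iii)$ one tests \eqref{trace1} against (an approximation of) the equilibrium potential $h$ of a bounded open set $\cU$: since $h=1$ on $\cU$ and $\int_M|dh|^2d\mu=\capa(\cU)$, one gets $\int_\cU q\,d\mu\le\int_M q\,h^2\,d\mu\le C_1\int_M|dh|^2d\mu=C_1\capa(\cU)$, so $C_3\le C_1$ (a small regularization argument is needed to apply \eqref{trace1}, which is stated for $\cC^\infty_c$, to $h$; density of $\cC^\infty_c(M)$ in $\hW(M)$ and the fact that $0\le h\le1$ with $h=1$ on $\cU$ handle this). For $iii)\Leftrightarrow v)$: using the self-adjoint isometry $\Delta^{-1/2}$ one has, with $u:=\Delta^{-1}(q\un_\cU)$ and $v:=\Delta^{-1/2}(q\un_\cU)$,
$$\int_M|v|^2d\mu=\int_M v\,\Delta^{1/2}v\,d\mu=\int_M u\,q\un_\cU\,d\mu=\int_\cU u\,q\,d\mu,$$
and $u=h_{q\un_\cU}$ in the notation \eqref{defh} is superharmonic; comparing $u$ on $\cU$ with the equilibrium potential $h$ of $\cU$ via the maximum principle (or directly: $\int_\cU q\,d\mu=\int_M q\un_\cU\,h\,d\mu=\int_M\langle dv,d(\Delta^{-1/2}h)\rangle d\mu\le\|v\|_{L^2_\mu}\,\|d h\|_{L^2_\mu}\cdot(\dots)$, Cauchy--Schwarz) gives both directions and shows $C_5^2$ and $C_3$ are comparable. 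The equivalence $iv)\Leftrightarrow v)$ is then just $\int_\cU q\,d\mu\le C_3\capa(\cU)$ combined with the identity $\int_M|v|^2d\mu=\int_\cU u\,q\,d\mu$; one direction is trivial and the other uses $iii)$.

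The one genuinely nontrivial implication is $iv)\Rightarrow i)$ (equivalently $v)\Rightarrow ii)$): from the \emph{truncated} bound \eqref{trace4} on the single operator applied to cut-offs of $q$ one must recover boundedness of $\sqrt q\,\Delta^{-1/2}$ on all of $L^2_\mu$. This is where the Maz'ya--Verbitsky machinery enters, and I expect it to be the main obstacle. The idea is to show $\int_M(\Delta^{-1/2}\sqrt q\,f)^2 q\,d\mu\le C\int_M f^2d\mu$ for $f\ge0$ by a dyadic/good-$\lambda$ decomposition of the level sets of the superharmonic potential $\Delta^{-1}(q\,f)$ (or of $\Delta^{-1}q$ restricted to where $f$ lives): on each dyadic piece $\cU$ of a level set one applies \eqref{trace4}, and the superharmonicity together with a Harnack-type / maximum-principle comparison of the potentials on nested level sets lets one sum the pieces geometrically. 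Concretely one writes $q$-a.e. point in terms of the sets $\cU_k=\{\Delta^{-1}q>2^k\}$, uses $\capa(\cU_k)\lesssim 2^{-k}\int_{\cU_{k-1}}q\,d\mu$ (a standard capacity estimate coming from the equilibrium potential being a truncation of $\Delta^{-1}q$), and telescopes. None of the doubling/Poincaré hypotheses are actually needed here beyond non-parabolicity — only the existence of a minimal positive Green function and the elementary potential-theoretic facts collected in \pref{prop:capa} — which is why the theorem is stated at this level of generality; the remark that "the proofs can be easily adapted" refers precisely to this purely potential-theoretic repackaging of Maz'ya's argument. Tracking constants through the decomposition yields $C_1\lesssim C_4$ (with an absolute implied constant), closing the cycle and giving the mutual control of all the constants.
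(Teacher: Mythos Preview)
Your cycle is mis-planned and, as written, does not close. The step $iii)\Rightarrow v)$ that you need is not justified: you write $\int_M|v|^2d\mu=\int_\cU u\,q\,d\mu$ with $u=\Delta^{-1}(q\un_\cU)$, and then propose to ``compare $u$ on $\cU$ with the equilibrium potential $h$ via the maximum principle''. But $u$ is superharmonic, not harmonic, and there is no reason for it to be bounded on $\cU$ (if $q$ concentrates near a point, $u$ blows up there), so no such comparison gives $\int_\cU u\,q\,d\mu\lesssim\capa(\cU)$ from $\int_\cU q\,d\mu\le C_3\capa(\cU)$ alone. Your ``or directly'' parenthesis only yields the reverse direction $v)\Rightarrow iii)$ (and also $iv)\Rightarrow iii)$, via $\int_\cU q\le \|v\|_{L^2_\mu}\sqrt{\capa(\cU)}$), not $iii)\Rightarrow v)$.

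More importantly, you have completely misjudged where the content lies. The paper's cycle is $ii)\Rightarrow iv)\Rightarrow v)\Rightarrow iii)\Rightarrow i)$, and every step except the last is a two-line duality argument of the type you already wrote down (test $\Delta^{-1/2}\sqrt q$ on $\sqrt q\,\un_\cU$ for $ii)\Rightarrow iv)$; your inequality $\int_\cU q\le\|v\|_{L^2_\mu}\sqrt{\capa(\cU)}$ gives both $iv)\Rightarrow v)$ and $v)\Rightarrow iii)$). The genuinely nontrivial step is $iii)\Rightarrow i)$, and for this no dyadic decomposition, no good-$\lambda$, no Harnack, no level sets of $\Delta^{-1}q$ are needed. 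Maz'ya's original argument is a three-line layer-cake computation: for $\varphi\in\cC^\infty_c(M)$ and $t>0$, test \eqref{trace3} on $\cU_t=\{\varphi^2>t\}$, use that $\min(\varphi^2/t,1)$ is admissible for $\capa(\cU_t)$ to get
\[
\int_{\cU_t}q\,d\mu\le C_3\int_{\{\varphi^2<t\}}\frac{|d\varphi^2|^2}{t^2}\,d\mu
=4C_3\int_{\{\varphi^2<t\}}\frac{\varphi^2|d\varphi|^2}{t^2}\,d\mu,
\]
and then integrate in $t$ with Cavalieri and Fubini to obtain $\int_M q\varphi^2\,d\mu\le 4C_3\int_M|d\varphi|^2\,d\mu$. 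That is the whole argument; your proposed machinery for $iv)\Rightarrow i)$ is both unnecessary and, as sketched (level sets of $\Delta^{-1}q$ when one must control $\int q\varphi^2$ for \emph{arbitrary} $\varphi$), not obviously workable.
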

\proof
We have already explained that{ \bf   i)$\Leftrightarrow$ ii)}.\par
{Proof of  \bf  ii)$\Rightarrow$ iv).} Testing the $L^2_\mu$-boundedness of $\Delta^{-\frac12}\sqrt{q}=\left(\sqrt{q}\Delta^{-\frac12}\right)^*=T^*$ with $f=\sqrt{q}\un_\cU$, we get 
$$\int_M \left|\Delta^{-\frac12}\left(q\un_{\cU}\right)\right|^2d\mu\le \left\|T\right\|^2\, \int_M\left(\sqrt{q}\un_{\cU}\right)^2d\mu,$$
Hence we get that that $\textbf{iv)}$ holds with $C_4= \left\|\sqrt{q}\Delta^{-\frac12}\right\|^2.$\par

{\bf  Proof of  iv)$\Rightarrow$ v).} We notice that \eqref{trace4} is equivalent to
$$\forall \cU,\forall f\in L^2_\mu\colon\la\left(q\un_\cU\right),\Delta^{-\frac 12}f\ra_{L^2_\mu}= \la\Delta^{-\frac 12}\left(q\un_\cU\right),f\ra_{L^2_\mu}\le \sqrt{C_4} \sqrt{\int_{\cU}qd\mu\ }\|f\|_{L^2_\mu}.$$
That is to say that 
$$\forall \cU,\forall \varphi\in \hW(M)\colon\la q\un_\cU,\varphi\ra_{L^2_\mu}\le  \sqrt{C_4} \sqrt{\int_{\cU}qd\mu\ }\|d\varphi\|_{L^2_\mu}.$$
Then using the definition of the capacity we get that 
$$\sqrt{\int_{\cU} qd\mu}\le \sqrt{C_4 \capa(\cU)}$$ and reporting this estimate in \eqref{trace4} we obtain the inequality \eqref{trace5} with $C_5= C_4.$
\par
{\bf  v)$\Rightarrow$ iii)} The same argumentation yields that  inequality \eqref{trace5} is equivalent to
$$\forall \cU,\forall \varphi\in \hW(M)\colon\la q\un_\cU,\varphi\ra_{L^2_\mu}\le  C_5\sqrt{\capa(\cU)\ }\ \|d\varphi\|_{L^2_\mu}.$$ Hence we obtain the inequality \eqref{trace3} with $C_3= C_5.$

\par
{\bf Proof of  iii)$\Rightarrow$ i).} The argument is the one originally given by Maz'ya. For any $\varphi\in \cC^\infty_c(M)$ and $t>0$ we test \eqref{trace3} on $\cU_t=\{\varphi^2>t\}$ and get that 
$$\int_{\cU_t} qd\mu\le C_3\int_{\{\varphi^2<t^2\}} \frac{\left|d\varphi^2\right|^2}{t^2}d\mu=4C_3\int_{\{\varphi^2<t^2\}} \frac{\varphi^2\left|d\varphi\right|^2}{t^2}d\mu.$$
Then integrating with respect to $t\in (0,+\infty)$ and using the Cavalieri's formula and the Fubini theorem, we get that
\begin{align*}\int_M q\varphi^2\,d\mu&=\int_0^\infty\left(\int_{\cU_t} qd\mu\right)dt\\
&\le 4C_3\int_M \varphi^2\left|d\varphi\right|^2\left(\int^{+\infty}_{\varphi^2}\frac{dt}{t^2}\right)d\mu\\
&=4C_3\int_M \left|d\varphi\right|^2\,d\mu.
\end{align*}
Hence we obtain the inequality \eqref{trace1} with $C_1= 4C_3.$
\endproof
\begin{rem} The proof show that the constants $\left\|\sqrt{q}\Delta^{-\frac12}\right\|^2$ and $C_1,C_3,C_4,C_5$ are mutually controlled in the following way:
\begin{itemize}
\item If $\textbf{i)}$ holds with constant $C_1$ then $\textbf{ii)}$ holds with$ \left\|\sqrt{q}\Delta^{-\frac12}\right\|^2\le C_1$;
\item If $\textbf{ii)}$ then $\textbf{iv)}$ with constant $C_4=\left\|\sqrt{q}\Delta^{-\frac12}\right\|^2$;
\item If $\textbf{iv)}$ holds with constant $C_4$ then $\textbf{v)}$ holds with constant $C_5=C_4$;
\item If $\textbf{v)}$ holds with constant $C_5$ then $\textbf{iii)}$ holds with constant $C_3=C_5$;
\item If $\textbf{iii)}$ holds with constant $C_3$ then $\textbf{i)}$ holds with constant $C_1=4C_3$.
\end{itemize}

\end{rem}
\subsection{With the relative Faber-Krahn inequality}
If one assumes moreover some a priori geometric estimates, we can get other sufficient or necessary conditions for the properties given in \tref{thm:trace}. The first set of conditions are the so called relative Faber-Krahn inequality that has been introduced by Grigor'yan in \cite{GriRMI}.
\begin{defi}We say that a complete  weighted Riemannian manifold $(M,g,\mu)$ satisfies the relative Faber-Krahn inequality if there are positive constants $\text{b},\upnu$ such that for any geodesic ball $B$ of radius $r(B)$ and any open subset $\cU\subset B\colon$
\begin{equation}\label{def:FK}\tag{$\text{FK}_{\text{b},\upnu}$}\frac{ \text{b}}{r^2(B)}\left(\frac{\mu(\cU)}{\mu(B)}\right)^{-\frac2\upnu}\le \lambda_1(\cU),\end{equation}
where $\lambda_1(\cU)$ is the lowest eigenvalue of the Dirichlet Laplacian on $\cU$.
\end{defi} 
According to \cite{GriRMI}, the relative Faber-Krahn inequality is equivalent to the conjonction of a doubling property \eqref{doubling} and of the upper Gaussian estimate for the heat kernel
\begin{equation}\label{GUE}\tag{GUE}
\forall x,y\in M,\ \forall t>0\colon P(t,x,y)\le \frac{C}{\mu\left(B(x,\sqrt{t})\right)}e^{-\frac{d^2(x,y)}{5t}}.
\end{equation}

Then using the appendix, we can deduce the following:
\begin{thm}\label{thm:traceFK}  Let $(M,g,\mu)$ be a complete non-parabolic  weighted Riemannian manifold  satisfying the relative Faber-Krahn inequality  \eqref{def:FK}. There is a constant $C(\text{b},\upnu)$ depending only on $\text{b},\upnu$ with the property that if $q$ is non negative locally integrable function such that for some positive $A$ and for any geodesic ball $B\subset M\colon$
$$\int_B \left[\int_0^{+\infty} \left(\fint_{B(x,r)}q\un_Bd\mu\right)\, dr\right]^2 \,d\mu(x)\le A \int_B qd\mu$$ then
$$\forall \varphi\in \cC^\infty_c(M)\colon \ \int_M q\varphi^2\, d\mu\le C(\text{b},\upnu)\, A\int_M |d\varphi|^2\,d\mu$$
\end{thm}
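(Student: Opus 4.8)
The plan is to reduce \tref{thm:traceFK} to criterion \textbf{iii)} of \tref{thm:trace}, namely the capacitary estimate $\int_\cU q\,d\mu\le C_3\capa(\cU)$. Since \eqref{def:FK} implies in particular the doubling condition \eqref{doubling} and the upper Gaussian estimate \eqref{GUE}, the hypothesis of the theorem is exactly the one under which the appendix (built on Verbitsky's argument \cite{Verbsurvol}) produces a trace inequality on doubling metric measure spaces. Concretely, the appendix should give, for any non negative locally integrable $q$ and any geodesic ball $B$, a control of $\int_B q\varphi^2\,d\mu$ by $\int_B|d\varphi|^2\,d\mu$ (localized to $B$) with constant $C(\text{b},\upnu)\,A$, \emph{provided} the ``self-improving'' hypothesis $\int_B\big[\int_0^\infty(\fint_{B(x,r)}q\un_B\,d\mu)\,dr\big]^2 d\mu(x)\le A\int_B q\,d\mu$ holds; the quantity $\int_0^\infty \fint_{B(x,r)}q\un_B\,d\mu\,dr$ is the natural metric-space analogue of the Riesz potential $I_1(q\un_B)$, and its $L^2(q\un_B\,d\mu)$-boundedness is precisely a Kerman--Sawyer / Maz'ya--Verbitsky type condition equivalent to the trace inequality on $B$.

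First I would invoke the appendix to obtain, for every geodesic ball $B$,
\begin{equation}\label{eq:local-trace}
\forall \varphi\in\cC^\infty_c(B)\colon \int_B q\,\varphi^2\,d\mu\le C(\text{b},\upnu)\,A\int_B|d\varphi|^2\,d\mu.
\end{equation}
Next, testing \eqref{eq:local-trace} against a cutoff equal to $1$ on an open set $\cU\subset B$ and taking the infimum gives the \emph{local} capacitary bound $\int_\cU q\,d\mu\le C(\text{b},\upnu)\,A\,\capa_B(\cU)$, where $\capa_B$ is the capacity computed with test functions supported in $B$. The step that needs care is passing from this ball-by-ball statement to the global criterion \eqref{trace3} of \tref{thm:trace}: given an arbitrary bounded open $\cU\subset M$, one must bound $\int_\cU q\,d\mu$ by $\capa(\cU)$ with a constant independent of $\cU$. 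The standard device here is a Whitney-type covering of $\cU$ (or simply of a neighborhood of $\overline\cU$) by balls adapted to the doubling geometry, together with the relative Faber-Krahn inequality \eqref{def:FK} to compare $\capa_B(\cU\cap B)$ with the global $\capa(\cU)$ and to control the overlap; alternatively, and more cleanly, one can cover $\cU$ by a single chain of balls and use that \eqref{def:FK} yields a scale-invariant Sobolev/Poincaré inequality making local and global capacities comparable up to a dimensional constant. Either way the doubling constant absorbs the bounded overlap, the Faber-Krahn constant $\text{b}$ absorbs the capacity comparison, and one lands on \eqref{trace3} with $C_3=C(\text{b},\upnu)\,A$ after possibly enlarging $C(\text{b},\upnu)$.

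Finally, \tref{thm:trace}, \textbf{iii)}$\Rightarrow$\textbf{i)}, upgrades this to the desired global trace inequality with $C_1=4C_3=C(\text{b},\upnu)\,A$, which is the claim (after renaming the constant). The main obstacle, as indicated, is the localization-to-globalization step: one has to be sure that the passage through finitely or countably many Whitney balls does not lose the linear dependence on $A$ nor introduce a dependence on $\cU$, and this is exactly where the full strength of \eqref{def:FK}---equivalently doubling plus \eqref{GUE}, hence a uniform capacitary lower bound $\capa(B(x,r))\gtrsim \mu(B(x,r))/r^2$ and its consequences---is used. The interior estimates on $\Delta^{-1/2}$ and the Green kernel established in the earlier sections, via \tref{thm:trace}, guarantee that once the capacitary estimate is in hand the remaining implications are purely formal.
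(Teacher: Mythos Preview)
Your reduction has a genuine gap at the very first step. The appendix (\tref{thm:Genetrace}) is a purely metric--measure statement about the operator $\cK$ with kernel $K(x,y)=\int_{d(x,y)}^\infty d\tau/V(x,\tau)$; it never produces an inequality involving $|d\varphi|^2$. The hypothesis of \tref{thm:traceFK} is \emph{exactly} condition \textbf{iv)} of \tref{thm:Genetrace} for the function $q$ on the whole space $M$ (not a family of local conditions to be patched), and the conclusion of \tref{thm:Genetrace} \textbf{iv)}$\Rightarrow$\textbf{i)} is that $\cK\sqrt{q}\colon L^2_\mu\to L^2_\mu$ is bounded with $\|\cK\sqrt{q}\|^2\le CA$. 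From this one cannot read off your localized Sobolev inequality \eqref{eq:local-trace}; the passage to the Dirichlet form is precisely the missing ingredient.

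What the paper does instead is to compare kernels. The relative Faber--Krahn inequality gives the Gaussian upper estimate \eqref{GUE}, and \pref{prop:estikernel} then yields the pointwise domination
\[
\Delta^{-\frac12}(x,y)=\int_0^\infty P(t,x,y)\,\frac{dt}{\sqrt{\pi t}}\ \le\ C\,K(x,y).
\]
Hence the $L^2$-boundedness of $\cK\sqrt{q}$ immediately implies that of $\Delta^{-\frac12}\sqrt{q}$, which is criterion \textbf{ii)} of \tref{thm:trace}, and \textbf{ii)}$\Rightarrow$\textbf{i)} gives the global trace inequality directly. No Whitney covering, no comparison of local and global capacities, and no localization--globalization argument is needed; the route you describe is both unnecessary and, as written, unjustified (the inequality $\capa(\cU)\le\capa_B(\cU)$ goes the wrong way for your purpose, and the vague appeal to \eqref{def:FK} to reverse it is not substantiated).
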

\begin{proof}[Hint on the proof of \tref{thm:traceFK}:]
According to \tref{thm:Genetrace} {\bf iv)$\Rightarrow$i)}, we get that if $\cK$ is the operator whose Schwartz kernel is given by
$$K(x,y)=\int_{d(x,y)}^{+\infty} \frac{dr}{\mu\left(B(x,r)\right)},$$
that is to say
$$(\cK f)(x)=\int_0^{+\infty} \left(\fint_{B(x,r)} fd\mu\right) dr$$ then the operator $\cK\sqrt{q}$ has a bounded extension to $L^2_\mu$ with 
$$\left\|\cK\sqrt{q}\right\|^2_{L^2_\mu\to L^2_\mu }\le C A.$$
\pref{prop:estikernel} implies that the Schwartz kernel of $\Delta^{-\frac12}$ is dominated by the Schwartz kernel of $\cK$:

$$\forall x\not=y\in M\colon \Delta^{-\frac12}(x,y)=\int_0^{+\infty} P(t,x,y)\frac{dt}{\sqrt{\pi \, t}}\le C\int_{d(x,y)}^{+\infty} \frac{dr}{\mu\left(B(x,r)\right)}=K(x,y).$$
Hence the operator $\Delta^{-1}\sqrt{q}$ has also a bounded extension to $L^2_\mu$ with 
$$\left\|\Delta^{-1}\sqrt{q}\right\|^2_{L^2_\mu\to L^2_\mu }\le C A.$$
and we conclude with the fact that 
$$\left\|\Delta^{-1}\sqrt{q}\right\|_{L^2_\mu\to L^2_\mu }= \left\|\sqrt{q}\Delta^{-1}\right\|_{L^2_\mu\to L^2_\mu }$$ and  with \tref{thm:trace}{\bf ii)$\Rightarrow$i)}.
\end{proof}
\subsection{Poincaré inequalities}
\subsubsection{}The second set of geometric conditions are the so-called doubling condition and the Poincaré inequalities (\ref{def:P}).

Recall that according to \cite{Griharnack,Saloffharnack,Salofflivre}, the conjonction of doubling condition \eqref{doubling} and of the Poincaré inequalities  \eqref{def:P} is equivalent to the upper and lower Gaussian estimate of the heat kernel:
\begin{equation}\label{DUE}\tag{DUE}
\forall x,y\in M,\ \forall t>0\colon  \frac{c}{\mu\left(B(x,\sqrt{t})\right)}e^{- \frac{d^2(x,y)}{ct}}\le P(t,x,y)\le \frac{C}{\mu\left(B(x,\sqrt{t})\right)}e^{-\frac{d^2(x,y)}{5t}}.
\end{equation}
Note that the lower bound on the heat kernel and the doubling condition \eqref{doubling} easily imply 
\begin{equation}\label{lowerPball}
\forall o\in M,\ \forall t>0\ \forall x,y\in B(o,\sqrt{t}) \colon \frac{c'}{\mu\left(B(o,\sqrt{t})\right)}\le P(t,x,y).\end{equation}
Hence the doubling condition together with  the Poincaré inequalities  imply the relative Faber-Krahn inequality but the reciprocal is not true for instance for $n\ge2$, the connected sum $\R^n\#\R^n$ satisfies the relative Faber-Krahn inequality but not the Poincaré inequalities  \eqref{def:P}.
\subsubsection{Harnack inequalities for harmonic functions}
We also know that the conditions \eqref{doubling}$+$\eqref{def:P} are also equivalent to Parabolic Harnack inequalities for positive solution of the heat equation  \cite{Griharnack,Saloffharnack,Salofflivre}, hence they imply the elliptic Harnack inequalities
\begin{prop}\label{prop:harnack0}If $(M,g,\mu)$ is a complete  weighted Riemannian manifold satisfying the doubling condition\eqref{doubling} and the Poincaré inequalities \eqref{def:P} then there is a constant $C$ depending only on $\upkappa,\upnu,\uplambda$ such that for any geodesic ball $B$ and any positive harmonic function $\psi\colon 2B\rightarrow \R_+^*$:
$$\sup_B \psi\le C\, \inf_B \psi.$$
\end{prop}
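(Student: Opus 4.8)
The plan is to deduce this elliptic Harnack inequality from the \emph{parabolic} Harnack inequality, which, as recalled just above, is equivalent to the conjunction of the doubling condition \eqref{doubling} and the Poincaré inequalities \eqref{def:P} (see \cite{Griharnack,Saloffharnack,Salofflivre}). Write $B=B(o,r)$ and let $\psi\colon 2B\to\R_+^*$ be positive and harmonic, i.e. $\Delta\psi=0$ on $2B=B(o,2r)$. The key observation is that the time-independent function $u(t,x):=\psi(x)$ is a positive solution of the heat equation $\partial_t u+\Delta u=0$ on the parabolic cylinder $Q:=(0,4r^2)\times 2B$, since $\partial_t u=0=-\Delta\psi=-\Delta u$.

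First I would invoke the parabolic Harnack inequality for $u$ on $Q$: under \eqref{doubling}$+$\eqref{def:P} there is a constant $C=C(\upkappa,\upnu,\uplambda)$, independent of $r$, of $B$, and of $u$, such that
$$\sup_{(r^2,\,2r^2)\times B} u\ \le\ C\ \inf_{(3r^2,\,4r^2)\times B} u .$$
Because $u$ does not depend on the time variable, the left-hand side is exactly $\sup_B\psi$ and the right-hand side is exactly $C\inf_B\psi$, which is the claimed inequality. The parabolic scaling of the three concentric cylinders is matched to the radius $r(B)$, so that the constant $C$ produced by the parabolic Harnack inequality depends only on $\upkappa,\upnu,\uplambda$; note also that $\psi$, being harmonic on all of $2B$, automatically extends to a solution on the closed cylinder $\overline{Q}$, so there is no subtlety at the initial time.

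Beyond the cited equivalence \eqref{doubling}$+$\eqref{def:P}$\Leftrightarrow$ parabolic Harnack, there is essentially no obstacle in this route, so I expect the only ``hard'' content is the one already absorbed into that reference. Should one instead want a self-contained argument, the natural route is Moser's iteration: \eqref{doubling} together with \eqref{def:P} yields, by the Saloff-Coste argument, a family of local Sobolev inequalities on geodesic balls, which combined with the Caccioppoli inequality for harmonic functions gives the local sup-bound $\sup_{B(x,s/2)}\psi\le C\big(\fint_{B(x,s)}\psi^2\,d\mu\big)^{1/2}$ and, applied to $\psi^{-1}$, the matching lower bound for $\inf\psi$; the two $L^2$-means are then bridged by the fact that $\log\psi$ has bounded mean oscillation, which is exactly where \eqref{def:P} enters, via a John--Nirenberg argument. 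In that approach the main technical step is precisely this $BMO$ estimate on $\log\psi$.
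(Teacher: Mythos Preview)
Your proposal is correct and follows exactly the approach the paper takes: the paper does not give a detailed proof but simply notes, immediately before stating the proposition, that \eqref{doubling}$+$\eqref{def:P} is equivalent to the parabolic Harnack inequality by \cite{Griharnack,Saloffharnack,Salofflivre}, ``hence they imply the elliptic Harnack inequalities.'' Your argument---viewing a harmonic $\psi$ as a time-independent caloric function and applying the parabolic Harnack inequality on a suitably scaled cylinder---is precisely the standard way to unpack this implication, and the optional Moser-iteration sketch you add is consistent with the same references.
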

These Harnack inequalities imply more properties for harmonic functions:
\begin{prop}\label{prop:harnack}If $(M,g,\mu)$ is a complete  weighted Riemannian manifold satisfying the doubling condition \eqref{doubling} and of the Poincaré inequalities  \eqref{def:P}  then there are  constants $C>0, p_+>1,\alpha\in (0,1]$ depending only on $\upkappa,\upnu,\uplambda$ such that for any geodesic ball $B$ of radius $R$ and any  harmonic function $\psi\colon 3B\rightarrow \R_+^*$:
\begin{enumerate}[i)]
\item For any $y,z\in B\colon$
$$ \left|\psi(y)-\psi(z)\right|\le C\left(\frac{d(y,z)}{R}\right)^\alpha \sup_{x\in 2B} |\psi(x)|,$$
\item for any $0<\theta_1\le \theta_2\le 2\colon$
$$ \theta_1^{2-2\alpha}\fint_{\theta_1B} |d\psi|^2\, d\mu\le C \theta_2^{2-2\alpha}\fint_{\theta_2B} |d\psi|^2\, d\mu,$$
\item 
$\displaystyle \left(\fint_{B} |d\psi|^{p_+}\, d\mu\right)^{\frac 2p_+}\le C\, \fint_{2B} |d\psi|^2\, d\mu.$
\end{enumerate}
\end{prop}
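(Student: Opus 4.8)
The plan is to deduce all three assertions from the elliptic Harnack inequality \pref{prop:harnack0} together with the De Giorgi--Nash--Moser regularity theory that is available under the combination \eqref{doubling}$+$\eqref{def:P}; I would prove i), ii), iii) in that order, since ii) relies on i) and all three use the same elliptic toolbox. Throughout, $sB$ denotes the ball concentric with $B$ and of radius $s\,R$, and I would track the ball dilations only schematically --- it is exactly this bookkeeping that forces the hypothesis ``$\psi$ on $3B$'' instead of ``on $2B$''. For i) I would run the classical oscillation-decay argument: assuming $\psi$ non-constant (the constant case being trivial), set $M_s=\sup_{sB}\psi$, $m_s=\inf_{sB}\psi$; then $M_{2s}-\psi$ and $\psi-m_{2s}$ are positive harmonic on $2sB$, so by \pref{prop:harnack0} applied on $sB$ to each of them, adding the two resulting inequalities gives $\Osc_{sB}\psi\le\gamma\,\Osc_{2sB}\psi$ with $\gamma=(C-1)/(C+1)<1$, where $C$ is the Harnack constant. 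Iterating from scale $R$ downward (all balls involved remaining inside $3B$) yields the decay estimate $\Osc_{sB}\psi\le C'(s/t)^{\alpha}\,\Osc_{tB}\psi$ for $0<s\le t\le 1$, with $\alpha=\log_2(1/\gamma)\in(0,1]$; specialising to $y,z\in B$, $t=1$, and using $\Osc_{2B}\psi\le 2\sup_{2B}|\psi|$ gives i).

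For ii) I would combine two complementary inequalities for the harmonic function $\psi$. The Caccioppoli inequality (obtained by testing $\Delta\psi=0$ against $\eta^{2}(\psi-c)$ with a standard cut-off $\eta$, valid for every constant $c$), together with \eqref{doubling}, gives $\fint_{sB}|d\psi|^{2}\,d\mu\le C(sR)^{-2}\,\Osc_{2sB}^{2}\psi$. Conversely, Moser's $L^{\infty}$--$L^{2}$ bound for subsolutions applied to $\pm(\psi-\psi_{sB})$, followed by the Poincaré inequality \eqref{def:P}, gives $\fint_{sB}|d\psi|^{2}\,d\mu\ge c(sR)^{-2}\,\Osc_{(s/2)B}^{2}\psi$. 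Hence the quantity $\Phi(s):=s^{2-2\alpha}\fint_{sB}|d\psi|^{2}\,d\mu$ is, up to a fixed dilation of the ball and fixed multiplicative constants, comparable to $s^{-2\alpha}\,\Osc_{sB}^{2}\psi$; and the decay estimate from i) says precisely that $s\mapsto s^{-\alpha}\,\Osc_{sB}\psi$ is non-increasing up to a multiplicative constant. Therefore $\Phi$ is non-increasing up to a multiplicative constant, which --- after dividing out the common power of $R$ and dealing with scales near the maximal radius by \eqref{doubling} --- is the assertion of ii).

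For iii) I would prove a reverse Hölder (Meyers--Gehring) estimate for $|d\psi|$. Since $(M,g,\mu)$ is complete and satisfies \eqref{doubling} and the $2$-Poincaré inequality \eqref{def:P}, the Keith--Zhong self-improvement theorem provides an exponent $p_{-}\in(1,2)$ and a dilation $\Lambda\ge 1$ for which a $p_{-}$-Poincaré inequality holds; combined with \eqref{doubling} and with $\upnu>2$ (\rref{nu2}), the standard truncation and chaining argument upgrades this to a Sobolev--Poincaré inequality of the form $\big(\fint_{B'}|u-u_{B'}|^{2}\,d\mu\big)^{1/2}\le C\,r(B')\big(\fint_{\Lambda B'}|du|^{p_{-}}\,d\mu\big)^{1/p_{-}}$, valid for all balls $B'$ and all $u\in\Wloc$. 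Inserting this into Caccioppoli for $\psi$ yields the weak reverse Hölder inequality $\fint_{B'}|d\psi|^{2}\,d\mu\le C\big(\fint_{\Lambda'B'}|d\psi|^{p_{-}}\,d\mu\big)^{2/p_{-}}$ over all balls $B'$ with $\Lambda'B'\subset 3B$, and Gehring's lemma (in its metric-measure-space version, with a fixed dilation factor) then produces $p_{+}>2$ and $C>0$, both depending only on $\upkappa,\upnu,\uplambda$, with $\big(\fint_{B}|d\psi|^{p_{+}}\,d\mu\big)^{2/p_{+}}\le C\fint_{2B}|d\psi|^{2}\,d\mu$, which is the claim.

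I expect the only genuinely non-routine ingredient to be the one used in iii): the passage to an $L^{p_{-}}$-Poincaré inequality with $p_{-}<2$, i.e. the Keith--Zhong self-improvement of Poincaré inequalities on complete doubling spaces (alternatively one may quote directly from the literature the higher integrability of gradients of harmonic functions on doubling Dirichlet spaces supporting a Poincaré inequality, which is exactly what Gehring's lemma delivers here). Everything else --- the oscillation decay of i), the Caccioppoli and Moser estimates of ii), and Gehring's lemma itself --- is standard machinery once one works on a doubling space with a Poincaré inequality; under \eqref{doubling}$+$\eqref{def:P} the whole elliptic theory (Harnack, Hölder continuity, local boundedness, Moser iteration) can in fact be quoted from Grigor'yan, Saloff-Coste or Sturm. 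The remaining care is purely organisational: checking that every ball appearing in the three arguments stays inside the region $3B$ on which $\psi$ is harmonic and positive, which is why the statement is phrased with $3B$.
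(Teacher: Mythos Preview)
Your sketch is correct and follows the standard route. The paper does not actually prove this proposition: it simply records that i) is the classical oscillation-decay consequence of the Harnack inequality \cite[Proof of Theorem~8.22]{GT}, that ii) follows from \cite[Proposition~5.5]{caRMI}, and that iii) is proved by Auscher and Coulhon \cite[Subsection~2.1]{ACPisa}. Your arguments are exactly what those references carry out --- the De Giorgi--Moser oscillation decay for i), Caccioppoli combined with the Moser/Poincar\'e reverse bound to compare $s^{2-2\alpha}\fint_{sB}|d\psi|^2$ with $s^{-2\alpha}\Osc_{sB}^2\psi$ for ii), and the weak reverse H\"older inequality fed into Gehring's lemma for iii) --- so your proposal is essentially an explicit unpacking of the paper's citations.
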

The  H\"older regularity of harmonic function \pref{prop:harnack}-i), is a classical consequence of the Harnack inequality (see for instance \cite[Proof of Theorem 8.22]{GT}). The proof of \cite[Proposition 5.5]{caRMI} implies that  \pref{prop:harnack}-ii) holds under Poincaré inequalities and the doubling condition. The reverse  Hölder property for the gradient of harmonic functions \pref{prop:harnack}-iii)  is proven by Auscher and Coulhon \cite[Subsection 2.1]{ACPisa}.
\subsubsection{Another equivalent condition for \tref{thm:trace}}
Similarly to \tref{thm:traceFK}, one obtains:
\begin{thm}\label{thm:traceDP} Let $(M,g,\mu)$ be a complete non-parabolic weighted Riemannian manifold satisfying the doubling condition \eqref{doubling} and of the Poincaré inequalities  \eqref{def:P}  and $q$ be a non negative, locally integrable function then the following properties are equivalents
\begin{enumerate}[i)]
\item There is a constant $C_1$ such that \eqref{trace1} holds.
\item $Q=\Delta^{-\frac12} q$ is finite a.e. and there is a constant $C_{ii}$ such that
$$\Delta^{-\frac12} (Q^2 )\le C_{ii} \, Q$$
\item there is a constant $C_{iii}$ such that for any geodesic ball $B$:
$$\int_B \left| \Delta^{-\frac12}\left(q\un_B\,\right)\right|^2(x)d\mu(x)\le C_{iii}\int_B q\,d\mu$$
\item $\tilde Q(x)=\int_0^{+\infty} \left(\fint_{B(x,r)}q\,d\mu\right)dr$ is finite almost a.e. and there is a constant $C_{iv}$ such that 
$$\int_0^{+\infty}  \left(\fint_{B(x,r)}\tilde Q^2\,d\mu\right)dr\le C_{iv} \tilde Q(x).$$
\end{enumerate}
Moreover the constants $C_1, C_{ii},  C_{iii},  C_{iv}$ are mutually controlled.

\end{thm}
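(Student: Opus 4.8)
The plan is to reduce all four conditions to statements about the single ``model'' kernel
$$K(x,y)=\int_{d(x,y)}^{+\infty}\frac{dr}{\mu\left(B(x,r)\right)},$$
and then to appeal to the general trace characterisation of the appendix. The first step is the two-sided kernel comparison
$$c^{-1}\,K(x,y)\ \le\ \Delta^{-\frac12}(x,y)=\int_0^{+\infty}P(t,x,y)\,\frac{dt}{\sqrt{\pi t}}\ \le\ c\,K(x,y),$$
valid with $c=c(\upkappa,\upnu)$: the upper bound is \pref{prop:estikernel}, and the lower bound follows from exactly the same elementary computation (substitute $\sqrt t=r$) applied to the \emph{lower} Gaussian estimate in \eqref{DUE} together with the doubling property \eqref{doubling}. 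This is the only place where the Poincaré inequalities \eqref{def:P}, and not merely the relative Faber-Krahn inequality, are used, which is why \tref{thm:traceDP} demands stronger hypotheses than \tref{thm:traceFK}. Denoting by $\mathcal K$ the positive symmetric operator with kernel $K$, Tonelli's theorem gives $\mathcal Kf(x)=\int_0^{+\infty}\big(\fint_{B(x,r)}f\,d\mu\big)\,dr$ for $f\ge 0$; in particular $\tilde Q=\mathcal Kq$ and $Q=\Delta^{-\frac12}q\asymp\mathcal Kq=\tilde Q$, and $\Delta^{-\frac12}(g\,\un_\cB)\asymp\mathcal K(g\,\un_\cB)$ pointwise for every Borel set $\cB$ and every $g\ge0$, the two kernels being comparable and positive.

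The next step is to rewrite each condition as a statement about $\mathcal K$. Since comparable positive kernels have comparable $L^2_\mu$ operator norms, and the kernel of $\mathcal K\sqrt q$ (respectively of $\Delta^{-\frac12}\sqrt q=(\sqrt q\,\Delta^{-\frac12})^*$) is $K(x,y)\sqrt{q(y)}$ (respectively $\Delta^{-\frac12}(x,y)\sqrt{q(y)}$), and since \textbf{i)} is equivalent (as explained before \tref{thm:trace}) to $\|\sqrt q\,\Delta^{-\frac12}\|^2_{L^2_\mu\to L^2_\mu}\le C_1$, condition \textbf{i)} is equivalent, with controlled constants, to the $L^2_\mu$-boundedness of $\mathcal K\sqrt q$. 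By the pointwise kernel comparison, \textbf{iii)} is equivalent to the ball-testing condition $\int_B\big[\mathcal K(q\,\un_B)\big]^2\,d\mu\le C\int_B q\,d\mu$ for all geodesic balls $B$; condition \textbf{iv)} unwinds, via the Tonelli identity above, to $\mathcal K\big((\mathcal Kq)^2\big)\le C\,\mathcal Kq$ with $\mathcal Kq<+\infty$ a.e.; and, since $Q\asymp\tilde Q$, condition \textbf{ii)} is equivalent, with controlled constants, to this same pointwise condition. (One may note in passing that the ball-testing condition is exactly the hypothesis of \tref{thm:traceFK}---applicable since \eqref{doubling}$+$\eqref{def:P} imply \eqref{def:FK}---which already yields \textbf{iii)}$\Rightarrow$\textbf{i)}, while \tref{thm:trace} yields \textbf{i)}$\Rightarrow$\textbf{iii)} upon restricting \eqref{trace4} to balls.)

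It then remains to show that the $L^2_\mu$-boundedness of $\mathcal K\sqrt q$, the ball-testing condition, and the pointwise condition $\mathcal K\big((\mathcal Kq)^2\big)\le C\,\mathcal Kq$ are equivalent with mutually controlled constants. This is precisely the content of the general trace theorem of the appendix, \tref{thm:Genetrace}, applied to the positive symmetric kernel $K$: the doubling condition \eqref{doubling} guarantees that $K$ satisfies the structural hypotheses needed there---it depends, up to constants, only on $x$ and $d(x,y)$, is non-increasing in $d(x,y)$, and inherits from $K(x,y)\asymp\int_{d(x,y)}^{+\infty}dr/\mu(B(x,r))$ the quasi-additivity (``$3G$''-type) estimate that Verbitsky's argument uses. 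Chaining this with the dictionary of the previous paragraph gives all four equivalences of \tref{thm:traceDP} together with the mutual control of $C_1,C_{ii},C_{iii},C_{iv}$.

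I expect the hard part to be that last equivalence, and in particular the implication ``pointwise condition $\Rightarrow$ trace inequality'': proving that $\mathcal K\big((\mathcal Kq)^2\big)\le C\,\mathcal Kq$ already forces $\mathcal K\sqrt q$ to be bounded on $L^2_\mu$. In the appendix this is done by adapting Verbitsky's stopping-time decomposition of the level sets of the potential $\mathcal Kq$ to the doubling metric measure setting; checking that $K$ fits that abstract framework is where the doubling condition is genuinely needed, while the \emph{lower} bound $\Delta^{-\frac12}(x,y)\gtrsim K(x,y)$ from the first step---hence the reverse implications in \tref{thm:traceDP}---is where the full strength of \eqref{doubling}$+$\eqref{def:P} (through the two-sided estimate \eqref{DUE}) enters.
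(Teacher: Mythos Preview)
Your proposal is correct and follows exactly the paper's approach: use \pref{prop:estikernel} together with the two-sided Gaussian estimate \eqref{DUE} to get $\Delta^{-\frac12}(x,y)\asymp K(x,y)$, translate each of \textbf{i)}--\textbf{iv)} into the corresponding statement about $\cK$, and then invoke \tref{thm:Genetrace}. Two small inaccuracies that do not affect your reduction: $K$ is only quasi-symmetric (\lref{Lem:Ks}), not symmetric; and your last-paragraph guess about the appendix mechanism is slightly off---the key device there is not a stopping-time decomposition but Verbitsky's pointwise quadratic inequality $|\cK f|^2\le 2\upgamma^3\,\cK(f\,\cK f)$ (\lref{Lem:KK}).
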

\begin{proof}[Hint on the proof of \tref{thm:traceDP}:]
We introduce again the operator $\cK$ whose Schwartz kernel is given by
$$K(x,y)=\int_{d(x,y)}^{+\infty} \frac{dr}{\mu\left(B(x,r)\right)}.$$
Then using the lower and upper Gaussian estimate for the heat kernel \eqref{DUE}, \pref{prop:estikernel} yields that
$$\forall x\not=y\in M\colon c\, K(x,y)\le \Delta^{-\frac12}(x,y)\le C\,K(x,y).$$
So that  if \eqref{trace1} holds with constant $C_1$ then the operator $\cK\sqrt{q}$ has a bounded extension to $L^2_\mu$ and $\left\|\cK\sqrt{q}\right\|^2_{L^2_\mu\to L^2_\mu }\le C\, C_1.$
And if the operator $\cK\sqrt{q}$ has a bounded extension to $L^2_\mu$  then \eqref{trace1} holds with constant $C_1=c^{-1} \left\|\cK\sqrt{q}\right\|^2_{L^2_\mu\to L^2_\mu }$.

Hence \tref{thm:Genetrace} {\bf iv)$\Leftrightarrow$i)} implies the equivalence {\bf iv)$\Leftrightarrow$i)} in \tref{thm:traceDP}. 
The estimates
$$c\,\tilde Q\le Q\le C\,\tilde Q\text{ and }c\,\cK(\tilde Q^2)\le \Delta^{-\frac12}Q^2\le C\, \cK(\tilde Q^2)$$
imply the equivalence {\bf iv)$\Leftrightarrow$ii)} in \tref{thm:traceDP}. 
Similarly the equivalence {\bf iii)$\Leftrightarrow$iv)} in \tref{thm:traceDP} is a consequence of \tref{thm:Genetrace} {\bf iii)$\Leftrightarrow$iv)}.

\end{proof}
\section{A stronger $L^2$-weighted boundedness property for the Hodge projector}
\subsection{Muckenhoupt weight properties of equilibrium potential}
We recall:
\begin{defi}\label{def:A1}
If $(X,d,\mu)$ is a measure metric space satisfying the doubling condition \eqref{doubling1}, then a positive locally integrable function $\omega$ is said to be a $\Aun\!\!-$weight with constant $C$ if for any ball $B\subset X\colon$
$$ \fint_B \omega\,d\mu\le C\,\inf_B\omega.$$
\end{defi}
The following properties of $\Aun\!\!-$weight are classical \cite{Korte,Poidhardy}:
\begin{prop}\label{prop=A1}Let $\omega$ be a $\Aun\!\!-$weight with constant $C$ on some measure metric space $(X,d,\mu)$ satisfying the doubling condition \eqref{doubling1}. Then there is a constant $D$ depending only on $\upkappa,\upnu$ and $C$ such that for any ball $B\subset X\colon$
\begin{enumerate}[i)]
\item $\int_{2B} \omega d\mu\le D\int_{B} \omega d\mu$, 
\item  $\inf_B\omega\le \fint_B \omega\,d\mu\le C\,\inf_B\omega,$
\item $\inf_B\omega\le D\,\inf_{2B}\omega.$
\end{enumerate}
\end{prop}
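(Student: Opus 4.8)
The plan is to deduce all three assertions from the two ingredients already at hand: the defining inequality $\fint_B\omega\,d\mu\le C\inf_B\omega$ of an $\Aun\!\!-$weight (valid for \emph{every} ball) and the doubling condition \eqref{doubling}, which in particular gives $\mu(2B)\le\upkappa\,2^{\upnu}\,\mu(B)$ for every ball $B$. With these, the constant $D:=C\,\upkappa\,2^{\upnu}$ will serve for both (i) and (iii).

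Assertion (ii) requires no work beyond unravelling definitions: the bound $\inf_B\omega\le\fint_B\omega\,d\mu$ is the trivial fact that an average of $\omega$ over $B$ is at least its essential infimum on $B$, while $\fint_B\omega\,d\mu\le C\inf_B\omega$ is exactly the hypothesis that $\omega$ is an $\Aun\!\!-$weight with constant $C$.

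For (i), I would start from $\int_{2B}\omega\,d\mu=\mu(2B)\,\fint_{2B}\omega\,d\mu$ and apply the $\Aun$ property on the dilated ball $2B$ to get $\int_{2B}\omega\,d\mu\le C\,\mu(2B)\,\inf_{2B}\omega$. Since $B\subset 2B$ one has $\inf_{2B}\omega\le\inf_B\omega\le\fint_B\omega\,d\mu=\mu(B)^{-1}\int_B\omega\,d\mu$; substituting and using $\mu(2B)\le\upkappa\,2^{\upnu}\mu(B)$ yields $\int_{2B}\omega\,d\mu\le C\,\upkappa\,2^{\upnu}\int_B\omega\,d\mu$. Assertion (iii) is the same computation read in the other direction: $\inf_B\omega\le\fint_B\omega\,d\mu=\mu(B)^{-1}\int_B\omega\,d\mu\le\mu(B)^{-1}\int_{2B}\omega\,d\mu=\tfrac{\mu(2B)}{\mu(B)}\fint_{2B}\omega\,d\mu\le\upkappa\,2^{\upnu}\fint_{2B}\omega\,d\mu\le C\,\upkappa\,2^{\upnu}\inf_{2B}\omega$, the last step again being the $\Aun$ property applied on $2B$.

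There is no genuine obstacle here: the statement is classical and the argument is a short chase through the definitions. The one point worth keeping in mind is that the $\Aun$ inequality must be invoked on the enlarged ball $2B$ (not merely on $B$), which is legitimate because the $\Aun$ constant $C$ is assumed uniform over all balls; this is precisely what makes the resulting constants depend only on $C$, $\upkappa$ and $\upnu$.
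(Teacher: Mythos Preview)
Your argument is correct and is the standard proof of these classical facts. The paper itself does not give a proof of this proposition; it merely states the result as well known, with references to \cite{Korte,Poidhardy}. Your derivation, with the explicit constant $D=C\,\upkappa\,2^{\upnu}$, is exactly what one expects and fills in what the paper omits.
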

And we also have the following useful criteria for a weight to satisfy a reverse Hölder inequality
\begin{cor}\label{cor:A2RH}  If $(X,d,\mu)$ is a measure metric space satisfying the doubling condition \eqref{doubling1} and if for some $r>1$, $\omega^r$ is a $\Aun\!\!-$weight with constant $C$ then
$\omega$ is a reverse Hölder weight meaning that for any ball $B\subset X\colon$
$$\left(\fint_B \omega^rd\mu\right)^{\frac1r}\le C^{1/r}\,\fint_B \omega\,d\mu.$$
\end{cor}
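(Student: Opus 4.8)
The plan is simply to unwind the two definitions; there is essentially one idea, namely that taking an $r$-th root commutes with the infimum over a ball. First I would fix a ball $B\subset X$ (so $0<\mu(B)<\infty$, as in any doubling metric measure space) and apply the $\Aun\!\!-$weight condition to $\omega^r$, which gives at once
$$\fint_B \omega^r\,d\mu\le C\,\inf_B \omega^r.$$
Since $t\mapsto t^{1/r}$ is increasing on $\R_+$ one has $\inf_B \omega^r=\left(\inf_B\omega\right)^r$, so raising the previous inequality to the power $1/r$ yields
$$\left(\fint_B \omega^r\,d\mu\right)^{1/r}\le C^{1/r}\,\inf_B\omega.$$
It then only remains to bound $\inf_B\omega$ by its mean: since $\omega\ge\inf_B\omega$ on $B$, integrating over $B$ and dividing by $\mu(B)$ gives $\inf_B\omega\le\fint_B\omega\,d\mu$. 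Combining the last two displays produces exactly the claimed reverse Hölder inequality, with the same constant $C^{1/r}$.

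A couple of minor points deserve a word. One should first note that $\omega$ is locally integrable, so that the statement makes sense: by Jensen's inequality $\fint_B\omega\,d\mu\le\left(\fint_B\omega^r\,d\mu\right)^{1/r}<\infty$ for every ball $B$, using $r>1$ together with the local integrability of $\omega^r$. If the infima in \dref{def:A1} are read as essential infima the same argument applies verbatim, because $x\mapsto x^r$ is an increasing homeomorphism of $\R_+$ and thus commutes with the essential infimum as well.

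I do not expect any genuine obstacle here: the argument is a two-line manipulation of the definitions. The only point worth underlining is that the constant is not degraded — it even improves to $C^{1/r}$ — precisely because the average of $\omega^r$ is compared with $\inf_B\omega^r=\left(\inf_B\omega\right)^r$ rather than with $\inf_B\omega$ directly, the latter being dominated by $\fint_B\omega$ for free.
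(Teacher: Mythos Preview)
Your argument is correct and is exactly the two-line unwinding of the definitions that the paper has in mind; the paper states this corollary without proof, treating it as immediate from \dref{def:A1} and \pref{prop=A1}, and your write-up supplies precisely that.
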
 
Our result concerning the weight properties of equilibrium potential is the following generalization of \cite[Lemma 2.1]{MVarkiv}:
 \begin{prop}\label{prop:hA1} Let $(M,g,\mu)$ be a complete non-parabolic weighted Riemannian manifold  satisfying the doubling condition \eqref{doubling} and the Poincaré inequalities  \eqref{def:P}. Then for any $\displaystyle\tau\in \left[0,\upnu/(\upnu-2)\right)$, there is a positive constant  $C$ depending only on $\upkappa,\upnu,\uplambda$ and $\tau$ such that for any bounded open set, its equilibrium potential $h$ satisfies that $h^\tau$ is a $\Aun\!\!-$weight with constant $C$:
$$\forall B\subset M\colon \fint_B h^\tau\,d\mu\le C\,\inf_B h^\tau.$$
\end{prop}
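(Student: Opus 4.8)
The plan is to reduce the $\Aun$ property of $h^\tau$ to a single pointwise comparison between the average of $h^\tau$ on a ball $B=B(x_0,R)$ and its infimum, and to prove that comparison by splitting the equilibrium measure $\nu=\Delta h$ into a part living near $2B$ and a part living far from $B$. Write $h(x)=\int_M G(x,y)\,d\nu(y)$ with $\nu$ supported in the open set $\cU$. Fix a ball $B$ of radius $R$. Split $\nu=\nu_1+\nu_2$ where $\nu_1=\nu\restr 2B$ and $\nu_2=\nu\restr (M\setminus 2B)$, and correspondingly $h=h_1+h_2$. For the far part $h_2$, the key point is that $y\mapsto G(x,y)$ and $y\mapsto G(x',y)$ are comparable uniformly for $x,x'\in B$ and $y\notin 2B$ (this is a standard consequence of the elliptic Harnack inequality \pref{prop:harnack0} applied to the harmonic function $G(\cdot,y)$ on $2B$, together with doubling); hence $h_2$ is a positive harmonic function on $2B$ satisfying $\sup_B h_2\le C\inf_B h_2$, so $\fint_B h_2^\tau\le C^\tau\inf_B h_2^\tau\le C^\tau\inf_B h^\tau$ for every $\tau\ge 0$. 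The whole difficulty is therefore concentrated in the near part $h_1$, where $\nu_1$ can be very concentrated and $G$ has a singularity.

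For the near part one must show $\fint_B h_1^\tau\,d\mu\le C\,\inf_B h^\tau$ when $0\le\tau<\upnu/(\upnu-2)$. First I would record the basic on-diagonal estimate of the Green kernel coming from the Gaussian bounds \eqref{DUE}: for $y$ and $z$ in a fixed region, $\int_{2B}G(z,y)\,d\nu_1(y)$ and averages thereof can be controlled via $G(z,y)\le C\int_{d(z,y)}^{+\infty} r\,d r/\mu(B(z,r))\le C\,d(z,y)^2/\mu(B(z,d(z,y)))$, using non-parabolicity and doubling. The argument then runs as in Maz'ya–Verbitsky \cite[Lemma 2.1]{MVarkiv}: because $h\le 1$ and $h$ is the equilibrium potential, one has the mass bound $\nu_1(2B)\le\nu(2B)\le\capa(2B)$ and, more usefully, $\int_{2B} G(x_0,y)\,d\nu(y)=h(x_0)\le 1$; combining with a layer-cake / distribution-function computation for $\fint_B h_1^\tau$ and Fubini, the exponent constraint $\tau(\upnu-2)<\upnu$ is exactly what makes the resulting integral $\int\big(d(x_0,y)^2/\mu(B(x_0,d(x_0,y)))\big)^{\tau-?}$ converge after being weighted against $d\nu(y)$, reducing everything back to $\int G(x_0,y)\,d\nu(y)=h(x_0)$ up to a constant depending on $\upkappa,\upnu,\tau$. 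One then bounds $h(x_0)$ below by $\inf_B h$ using once more the Harnack inequality for the harmonic function $h_2$ on $2B$ and the positivity of $h_1$, or more simply by noting $\inf_B h\ge c\,h(x_0')$ for a suitable comparison point; either way $\inf_B h^\tau\ge c\,h(x_0)^\tau$, closing the loop.

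I would then assemble the two pieces: $\fint_B h^\tau\,d\mu\le 2^{\tau}\big(\fint_B h_1^\tau\,d\mu+\fint_B h_2^\tau\,d\mu\big)\le C\,\inf_B h^\tau$, with $C$ depending only on $\upkappa,\upnu,\uplambda,\tau$, which is the claim. Finally I would remark that the statement is proved for the equilibrium potential of an arbitrary bounded open set with a constant independent of that set — this uniformity is automatic because every estimate used (Harnack, doubling, Gaussian bounds, the Green-kernel upper bound) has constants depending only on $\upkappa,\upnu,\uplambda$, and the normalization $h\le 1$, $h(x_0)\le 1$ is scale-free. The main obstacle is the near-part estimate: getting the sharp exponent range $\tau<\upnu/(\upnu-2)$ requires the precise Green-kernel upper bound $G(x,y)\lesssim d(x,y)^2/\mu(B(x,d(x,y)))$ and a careful Fubini interchange, rather than any soft argument; everything else is bookkeeping with Harnack inequalities.
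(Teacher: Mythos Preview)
Your far-part argument (Harnack for $h_2$) is correct. The trouble is entirely in the near part, and your sketch there does not constitute a proof: you leave a literal ``?'' in the key exponent; the bound $\nu_1(2B)\le\capa(2B)$ is neither proven nor needed; the use of $h\le 1$ is a red herring (the $\Aun$ property is homogeneous in $h$, so any argument that genuinely uses this normalization cannot be right); and a layer-cake computation for $\fint_B h_1^\tau$ with $h_1=\int_{2B}G(\cdot,y)\,d\nu(y)$ would require knowing the distribution function of $h_1$, which for an arbitrary measure $\nu$ is no easier than the original problem. You also cannot recover $\inf_B h$ from $h(x_0)$ by Harnack, since $h$ is only superharmonic on $B$.

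The paper isolates the singular behaviour by first proving the result for a \emph{single} Green function (\lref{lem:GreenA1}). That lemma is the real content: one shows $\fint_B G_o^\tau\le C\inf_B G_o^\tau$ by a three-case split on the position of $o$ relative to $B=B(z,r)$. If $r\le d(o,z)/4$, Harnack for the positive harmonic function $G_o$ on $3B$ suffices. If $o=z$, one tests against $f\in L^{\tau^*}(B)$ with $\fint_B|f|^{\tau^*}=1$, uses the two-sided estimate $G_o(y)\simeq\int_{d(o,y)}^\infty s\,ds/\mu(B(o,s))$, and splits the $s$-integral at $r$; the inner piece leads to $\int_0^r s\,(r/s)^{\upnu/\tau^*}\,ds$, whose convergence is exactly the condition $\tau<\upnu/(\upnu-2)$. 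The remaining case $0<d(o,z)<4r$ reduces to the centred one by enlarging the ball. This centred-ball computation is precisely the step your plan is missing.

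Once \lref{lem:GreenA1} is in hand, the passage to a general $h(x)=\int G(x,y)\,d\nu_\cU(y)$ is a one-line duality (equivalently, Minkowski) argument: for $\tau>1$ and $\fint_B|f|^{\tau^*}=1$,
\[
\Bigl|\fint_B fh\,d\mu\Bigr|\le\int_M\Bigl(\fint_B G(\cdot,y)^\tau\,d\mu\Bigr)^{1/\tau}d\nu_\cU(y)\le C^{1/\tau}\int_M\inf_{B}G(\cdot,y)\,d\nu_\cU(y)\le C^{1/\tau}\inf_B h.
\]
No near/far splitting of $\nu$ is needed here; the Green-function lemma already handles every position of $y$. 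Your decomposition $h=h_1+h_2$ is not wrong, but it only postpones the issue: to control $\fint_B h_1^\tau$ you would still have to apply Minkowski and invoke \lref{lem:GreenA1} for each $y\in 2B$, at which point the separate far-part argument becomes redundant.
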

Note that the non-parabolicity condition forces that $\nu>2$ (remark \ref{nu2}). 
This proposition will be consequence of the same properties but only for Green functions.
\begin{lem}\label{lem:GreenA1} Under the assumption of \pref{prop:hA1}, for any $\displaystyle\tau\in \left[0,\upnu/(\upnu-2)\right)$, there is a positive constant $C$ depending only on $\upkappa,\upnu,\uplambda$ and $\tau$ such that for any $o\in M$ the Green function with pole at $o$ satisfies that for any ball $B\subset M\colon$
$$\fint_B G_o^\tau d\mu\le C\inf_B G_o^\tau.$$
\end{lem}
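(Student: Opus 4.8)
The goal is to show that $G_o^\tau$ is an $\Aun$-weight with a constant independent of $o$, for $\tau\in[0,\upnu/(\upnu-2))$. The strategy is the standard one for Green functions on spaces with doubling and Poincaré: first obtain two-sided pointwise bounds for $G_o$ on an annulus in terms of the radial quantity
$$N_o(x)=\int_{d(o,x)}^{+\infty}\frac{dr}{\mu(B(o,r))},$$
using \pref{prop:estikernel} (the bound $c\,K(o,x)\le \Delta^{-1/2}(o,x)$ is not quite what we need, but the analogous statement $\Delta^{-1}(o,x)=G(o,x)$ is dominated and minorised by $\int_{d(o,x)^2}^{\infty}\frac{dt}{\mu(B(o,\sqrt t))}\asymp N_o(x)$ follows from \eqref{DUE} and \eqref{lowerPball}). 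Thus $G_o(x)\asymp N_o(x)$ with constants depending only on $\upkappa,\upnu,\uplambda$. The key consequence is a \emph{doubling-type comparison} for $G_o$: there is $C=C(\upkappa,\upnu)$ with
$$\sup_{\partial B(o,s)}G_o\le C\inf_{\partial B(o,s)}G_o,\qquad \tfrac1C\,G_o(y)\le G_o(x)\le C\,G_o(y)\ \text{ whenever } \tfrac12 d(o,x)\le d(o,y)\le 2d(o,x),$$
because $N_o$ depends only on $d(o,\cdot)$ and the doubling condition controls the ratio $\mu(B(o,2r))/\mu(B(o,r))$; one also gets the decay estimate $N_o(x)\le C\big(\tfrac{d(o,x)}{d(o,y)}\big)^{\!-(\upnu-2)}N_o(y)$ for $d(o,y)\le d(o,x)$, which is exactly where the exponent $\upnu-2$ (and hence the restriction $\tau<\upnu/(\upnu-2)$) enters.

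**Reduction to two cases.** Fix a ball $B=B(z,r)$. We must bound $\fint_B G_o^\tau\,d\mu$ by $\inf_B G_o^\tau$. I would split according to the position of $o$ relative to $B$. \textbf{Case 1: $o\notin 4B$.} Then on $B$ the function $G_o$ is harmonic (it is harmonic away from $o$), positive on $3B$, so by the elliptic Harnack inequality \pref{prop:harnack0} (or directly \pref{prop:harnack}-i)) we have $\sup_B G_o\le C\inf_B G_o$ with $C=C(\upkappa,\upnu,\uplambda)$; since $0\le\tau$ and $t\mapsto t^\tau$ is monotone this gives $\fint_B G_o^\tau\,d\mu\le(\sup_B G_o)^\tau\le C^\tau\inf_B G_o^\tau$, done. \textbf{Case 2: $o\in 4B$.} Here $G_o$ has its singularity near $B$ and Harnack fails; this is the genuine case. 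Write $B=B(z,r)$, set $\rho=d(o,z)\le 4r$, and decompose $B\subset\bigcup_{k\ge 0}A_k$ into the "close" part $A_{-1}=B(o,\lambda r)\cap B$ for a fixed large $\lambda$ depending on the doubling constants, and dyadic annuli $A_k=\{2^{k}\lambda r\le d(o,\cdot)< 2^{k+1}\lambda r\}\cap B$ for $k\ge 0$ up to $k\lesssim\log(1/\lambda\cdot\text{something})$ — in fact since $B$ has radius $r$ only finitely many (a bounded number depending on $\lambda$) such annuli meet $B$, \emph{except} we still must integrate $G_o^\tau$ over the part of $B$ arbitrarily close to $o$.

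**Estimating the close part.** On each annulus $A_k$ ($k\ge0$) that meets $B$, $G_o\asymp N_o\asymp \big(2^k\lambda r\big)^{-(\upnu-2)}\mu(B(o,2^k\lambda r))^{-1}\cdot 2^k\lambda r$-type quantity is comparable to a constant $m_k$ (again by the annular comparison above), and $m_k$ is comparable to $\inf_B G_o$ up to the doubling-dependent factor coming from at most finitely many dyadic steps; so those terms contribute $\lesssim\mu(B)\inf_B G_o^\tau$. The crux is $\int_{B(o,\lambda r)}G_o^\tau\,d\mu\lesssim \int_{B(o,\lambda r)}N_o^\tau\,d\mu$, and using $N_o(x)\asymp \int_{d(o,x)}^{\infty}\frac{ds}{\mu(B(o,s))}$ together with the doubling lower bound $\mu(B(o,s))\ge c\,s^{\,\upnu'}$... more precisely, a reverse-doubling/lower-volume-growth estimate $\mu(B(o,s))/\mu(B(o,t))\ge c(s/t)^{\upnu-2+\eps}$ is \emph{not} available in general, so instead I would use the sharp bound $N_o(x)\le C\,\frac{d(o,x)^2}{\mu(B(o,d(o,x)))}\cdot\frac{1}{\upnu-2}$ arising from $\mu(B(o,s))\ge \upkappa^{-1}(s/d(o,x))^{\text{?}}$... this is the delicate point. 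The clean route: change variables to the level sets of $G_o$ exactly as in \pref{prop:htau}: writing $B(o,\lambda r)\subset\{G_o>t_0\}$ for $t_0\asymp\inf_{\partial B(o,\lambda r)}G_o\asymp\inf_B G_o$, and using that $\{G_o>t\}$ has $\mu$-measure controlled via the Faber–Krahn/volume bound $\mu(\{G_o>t\})\le C\, t^{-\upnu/(\upnu-2)}$ (this is the classical estimate on superlevel sets of the Green function under $(\text{FK}_{\text{b},\upnu})$), Cavalieri's formula gives
$$\int_{B(o,\lambda r)}G_o^\tau\,d\mu\le \tau\int_0^\infty t^{\tau-1}\mu(\{G_o>\max(t,t_0)\})\,dt\le C\,\mu(B(o,\lambda r))\,t_0^{\tau},$$
the integral converging precisely because $\tau<\upnu/(\upnu-2)$. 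Since $\mu(B(o,\lambda r))\le C\mu(B)$ by doubling and $t_0^\tau\le C\inf_B G_o^\tau$, we conclude $\fint_B G_o^\tau\,d\mu\le C\inf_B G_o^\tau$, finishing Case 2.

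**Main obstacle.** The hard part is Case 2 and specifically the superlevel-set volume estimate $\mu(\{x:G_o(x)>t\})\le C t^{-\upnu/(\upnu-2)}$ with $C=C(\upkappa,\upnu,\uplambda)$ uniform in $o$ and $t$; this is where doubling + Poincaré (through the relative Faber–Krahn inequality \eqref{def:FK} and \pref{prop:estikernel}) are really used, and it is what pins the admissible range of $\tau$ to $[0,\upnu/(\upnu-2))$. Once that estimate and the annular two-sided bound $G_o\asymp N_o$ are in hand, assembling the $\Aun$ inequality from Cases 1 and 2 is routine book-keeping with the doubling property. Then \pref{prop:hA1} follows from \lref{lem:GreenA1} since an equilibrium potential $h=\int_M G(\cdot,y)\,d\nu_\cU(y)$ with $\nu_\cU$ supported in a bounded set is, by the comparison above, pointwise comparable on each ball to $G_o^{}$ for an appropriate $o\in\supp\nu_\cU$ — or, more carefully, one integrates the $\Aun$ bound for $G_o^\tau$ against $d\nu_\cU(o)$ using concavity of $t\mapsto t^\tau$ for $\tau\le 1$ and a separate (easier) argument for $\tau>1$ using that $h$ is bounded; but since \pref{prop:hA1} is not the statement we are asked to prove, I stop here.
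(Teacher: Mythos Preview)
Your overall architecture (Harnack when $o\notin 4B$, direct integration of the singularity when $o\in 4B$) matches the paper's, but the execution of Case~2 has a genuine gap. First a minor slip: the correct two--sided bound is $G_o(x)\asymp\int_{d(o,x)}^{\infty}\frac{r\,dr}{\mu(B(o,r))}$ (the kernel of $\Delta^{-1}$, not of $\Delta^{-1/2}$), so your $N_o$ is missing a factor of $r$ in the integrand. More importantly, the superlevel-set bound $\mu(\{G_o>t\})\le C\,t^{-\upnu/(\upnu-2)}$ with $C$ depending only on $\upkappa,\upnu,\uplambda$ and \emph{uniform in $o$} is false: it is not scale-invariant (replace $\mu$ by $\lambda\mu$ and the constant changes by $\lambda^{-2/(\upnu-2)}$), while the doubling and Poincar\'e constants are. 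What is true, and what your Cavalieri computation actually needs, is the \emph{relative} estimate $\mu(\{G_o>t\})\le C\,\mu(\{G_o>t_0\})(t_0/t)^{\upnu/(\upnu-2)}$ for $t\ge t_0$; this does follow from doubling via the decay $F(s)\lesssim F(r)(r/s)^{\upnu-2}$ (with $F(\rho)=\int_\rho^\infty\frac{s\,ds}{\mu(B(o,s))}$) that you yourself stated earlier. With that correction your layer-cake argument goes through.

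The paper proceeds differently in Case~2 and avoids level sets altogether: it takes the ball $B=B(o,r)$, dualises by testing against $f\in L^{\tau^*}(B)$ with $\fint_B|f|^{\tau^*}=1$, and splits
\[
\left|\fint_B f\,G_o\,d\mu\right|\le C\Big(\underbrace{\int_r^\infty\frac{s\,ds}{\mu(B(o,s))}}_{\textbf{I}}\fint_B|f|\,d\mu+\underbrace{\frac{1}{\mu(B)}\int_0^r s\Big(\fint_{B(o,s)}|f|\Big)ds}_{\textbf{II}}\Big).
\]
The term $\textbf{I}$ is $\lesssim\inf_B G_o$ by \eqref{estiGreen0}; for $\textbf{II}$ one applies H\"older and the doubling bound $\mu(B(o,r))/\mu(B(o,s))\le\upkappa(r/s)^{\upnu}$ to get $\textbf{II}\lesssim r^2/\mu(B)\lesssim\inf_B G_o$, the integral $\int_0^r s(r/s)^{\upnu/\tau^*}ds$ converging exactly when $\tau<\upnu/(\upnu-2)$. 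This duality route is shorter and makes the role of the threshold $\upnu/(\upnu-2)$ completely transparent; your level-set route is also viable once the superlevel estimate is stated in relative form.
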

\begin{proof}[Proof of \pref{prop:hA1} assuming \lref{lem:GreenA1}]
It is enough to show the result when $\tau>1$. Let $\cU$ be a bounded open subset with equilibrium potential $h$ and equilibrium measure $\nu_\cU$:
$$h(x)=\int_M G(x,y)d\nu_\cU(y).$$
Let $B$ be a geodesic ball and let $f\in L^{\tau^*}(B)$ with $\frac1\tau+\frac1{\tau^*}=1$ and $\fint_B |f|^{\tau^*}d\mu=1$.
Then \begin{align*}
\left|\fint_B fhd\mu\right|&=\left|\int_M \left(\fint_B G(x,y)f(x)d\mu(x)\right)d\nu_\cU(y)\right|\\
&\le \int_M \left(\fint_B G^\tau(x,y)d\mu(x)\right)^{\frac 1\tau}d\nu_\cU(y)\\
&\le C^{\frac 1\tau}\int_M \left(\inf_{x\in B} G(x,y)\right)d\nu_\cU(y)\\
&\le C^{\frac 1\tau}\inf_B h.
\end{align*}
\end{proof}
\begin{proof}[Proof of \lref{lem:GreenA1}] The estimate \eqref{DUE} and \pref{prop:estikernel} imply the following estimate of the Green kernel:
\begin{equation}\label{estiGreen}c\int_{d(x,y)}^{+\infty}\frac{r}{\mu\left(B(x,r)\right)}dr\le G(x,y)\le C\int_{d(x,y)}^{+\infty}\frac{r}{\mu\left(B(x,r)\right)}dr.\end{equation}
Notice that using the doubling condition one easily gets that 
$$\int_{s}^{+\infty}\frac{r}{\mu\left(B(x,r)\right)}dr\ge \int_{s}^{2s}\frac{r}{\mu\left(B(x,r)\right)}dr\ge \frac{s^2}{\upgamma \mu(B(x,s))},$$
so that for any $x,y\in M:$
\begin{equation}\label{estiGreen3}
G(x,y)\ge c \frac{d^2(x,y)}{\mu\left( B(x,d(x,y))\right)}.
\end{equation}
Moreover if $y\in B(x,r)$ then $d(x,y)\le r$ and 
$$G(x,y)\ge c \int_{r}^{+\infty}\frac{r}{\mu\left(B(x,r)\right)}dr\ge  c\frac{r^2}{\upgamma \mu(B(x,r))},$$
hence we deduce that 
\begin{equation}\label{estiGreen0}\inf_{y\in B(x,r)} G(x,y)\ge c \int_{r}^{+\infty}\frac{r}{\mu\left(B(x,r)\right)}dr\ge  c\frac{r^2}{\upgamma \mu(B(x,r))},\end{equation}
Let $\tau\in \left[1,\upnu/(\upnu-2)\right)$ and $o\in M$ and $B=B(z,r)$. There are 3 cases to be considered
\\
\noindent{\bf First case:}  {$\bf r\le d(o,z)/4$}. In that case $x\mapsto G_o(x)$ is a positive harmonic function on $3B$ and the desired conclusion is then a direct consequence of the Harnack inequality
$$\sup_B G_o\le C \inf_B G_o.$$ \\
\noindent{\bf Second case: } ${\bf o=z}$. Let $f\in L^{\tau^*}(B)$ with $\frac1\tau+\frac1{\tau^*}=1$ and $\fint_B |f|^{\tau^*}d\mu=1$ then
$$\left|\fint_B fG_od\mu\right|\le C\fint_B\left( \int_{d(o,y)}^{+\infty}\frac{s}{\mu\left(B(o,s)\right)}ds\right)|f|(y)d\mu(y)=C\left(\textbf{I}+\textbf{II}\right)$$
with
$$\textbf{I}=\left(\int_{r}^{+\infty}\frac{s}{\mu\left(B(o,r)\right)}ds\right)\fint_B|f|(y)d\mu(y)$$
$$\text{ and } \textbf{II}=\fint_B \, \left(\int_{d(o,y)}^{r}\frac{s}{\mu\left(B(o,r)\right)}ds\right)|f|(y)d\mu(y).$$
Using that $\fint_B |f|^{\tau^*}d\mu=1$ and \eqref{estiGreen0}
one gets that 
$$\textbf{I}\le \frac{1}{c} \inf_B G_o.$$
For the second term:
\begin{align*}
\textbf{II}&= \fint_B\left( \int_{d(o,y)}^{r}\frac{s}{\mu\left(B(o,s)\right)}ds\right)|f|(y)d\mu(y).\\
&=\frac{1}{\mu(B)}\int_0^rs\left(\fint_{B(o,s)} |f|(y)dy\right) ds\\
&\le \frac{1}{\mu(B)}\int_0^rs\left(\frac{\mu\left(B(o,r)\right)}{\mu\left(B(o,s)\right)}\right)^{\frac1{\tau^*}}ds,
\end{align*}
where in the last line we used Hölder inequality and $\fint_B |f|^{\tau^*}d\mu=1$.
Using the doubling condition \eqref{doubling} one gets
$$\int_0^rs\left(\frac{\mu\left(B(o,r)\right)}{\mu\left(B(o,s)\right)}\right)^{\frac1{\tau^*}}ds\le \upkappa^{\frac{1}{\tau^*}} \int_0^rs\left(\frac{r}{s}\right)^{\frac\upnu {\tau^*}}ds=\frac{\upkappa^{\frac1{\tau^*}}\,\tau}{\upnu-(\upnu-2)\tau}\, r^2.$$
Hence we deduce that 
$$\left(\fint_B G_o^\tau d\mu\right)^{\frac 1\tau}\le \frac{C}{c} \inf_B G_o+\frac{C \upkappa^{\frac{1}{\tau^*}}\,\tau}{\upnu-(\upnu-2)\tau}\frac{r^2}{\mu(B(o,r))}.$$
As \eqref{estiGreen0} yields that 
$\frac{r^2}{\mu(B(o,r))}\le C \inf_{B} G_o$, we get
$$\left(\fint_B G_o^\tau d\mu\right)^{\frac 1\tau}\le C' \inf_B G_o.$$

\noindent{\bf Third case: }{$\bf 4r>d(o,z)>0$. } In that case, the result follows from $B(z,r)\subset B(o,5r)$: \begin{align*}
\fint_B G_o^\tau d\mu&\le \upgamma^3\fint_{B(o,5r)}G_o^\tau d\mu\\
&\le  \upgamma^3C\inf_{B(o,5r)}G_o^\tau \text{  (using the result obtained in the second case)}\\
&\le  \upgamma^3C\inf_{B}G_o^\tau.
\end{align*}

\end{proof}

\subsection{Reverse Hölder for gradient of harmonic functions}
\begin{lem}\label{lem:GHolder}  Let $(M,g,\mu)$ be a complete non-parabolic weighted Riemannian manifold  satisfying the doubling condition \eqref{doubling} and the Poincaré inequalities \eqref{def:P}. There is some $q>2$ and some positive constant $C$ that depend only on $\upkappa,\upnu,\uplambda$ such that for any $o\in M$ and any ball $B\subset M$ and any harmonic function $\psi\colon 3B\rightarrow \R$ satisfies the following reverse Hölder property:
$$\fint_B G_o(y) |d\psi|^{q}d\mu\le C \inf_B G_o\, \left(\fint_{2B}  |d\psi|^2d\mu\right)^{\frac{q}2}.$$
\end{lem}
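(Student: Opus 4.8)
The plan is to combine the two reverse‑Hölder‑type ingredients already at hand: the $\Aun$‑property of the Green function with pole at $o$ (\lref{lem:GreenA1} in the range $\tau=1$) and the reverse Hölder inequality for the gradient of a harmonic function (\pref{prop:harnack}‑iii), which produces an exponent $p_+>1$ with
$\left(\fint_{B}|d\psi|^{p_+}d\mu\right)^{1/p_+}\le C\fint_{2B}|d\psi|^2 d\mu$.
First I would apply Hölder's inequality on $B$ to split $\fint_B G_o|d\psi|^{q}d\mu$ into a product of $\left(\fint_B G_o^{\tau}d\mu\right)^{1/\tau}$ and $\left(\fint_B |d\psi|^{q\tau^*}d\mu\right)^{1/\tau^*}$, where $1/\tau+1/\tau^*=1$. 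The exponents must be chosen so that on the one hand $\tau$ stays strictly below $\upnu/(\upnu-2)$ (so that \lref{lem:GreenA1} gives $\left(\fint_B G_o^{\tau}d\mu\right)^{1/\tau}\le C\inf_B G_o$), and on the other hand $q\tau^*\le p_+$ together with $q>2$. Since $p_+>1$ is fixed and $\tau>1$ can be taken as close to $1$ as we like, $\tau^*$ can be made arbitrarily large; hence we may set, say, $\tau^*$ large enough that $q:=p_+/(2\tau^*)\cdot 2 = p_+/\tau^*$ still exceeds $2$ — concretely choose $\tau$ close enough to $1$ that $p_+/\tau^*>2$, and then put $q=p_+/\tau^*$. (If $p_+$ is too close to $1$ for this to leave room above $2$, one first replaces $p_+$ by the smaller self‑improved exponent guaranteed by the open‑endedness of reverse Hölder, or equivalently uses \pref{prop:harnack}‑iii on a slightly different radius; in any case the three constraints $1<\tau<\upnu/(\upnu-2)$, $q\tau^*\le p_+$, $q>2$ are simultaneously satisfiable since $\upnu>2$ by Remark~\ref{nu2}.)

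With these choices in place, the estimate is assembled as follows. By Hölder,
$$\fint_B G_o|d\psi|^{q}d\mu\le \left(\fint_B G_o^{\tau}d\mu\right)^{1/\tau}\left(\fint_B |d\psi|^{q\tau^*}d\mu\right)^{1/\tau^*}.$$
The first factor is bounded by $C\inf_B G_o$ using \lref{lem:GreenA1}. For the second factor, since $q\tau^*\le p_+$, Jensen's inequality gives $\left(\fint_B|d\psi|^{q\tau^*}d\mu\right)^{1/(q\tau^*)}\le \left(\fint_B|d\psi|^{p_+}d\mu\right)^{1/p_+}$, and then \pref{prop:harnack}‑iii (applied to the harmonic function $\psi$ on $3B$) bounds the right‑hand side by $C\fint_{2B}|d\psi|^2 d\mu$. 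Raising to the power $q$, the second factor is at most $C\left(\fint_{2B}|d\psi|^2 d\mu\right)^{q/2}\cdot\big(\fint_{2B}|d\psi|^2d\mu\big)^{(q\tau^*-q)/(q\tau^*)\cdot 0}$ — more carefully, $\big(\fint_B|d\psi|^{q\tau^*}d\mu\big)^{1/\tau^*}\le C\big(\fint_{2B}|d\psi|^2d\mu\big)^{q/2}$ after combining the two displayed bounds. Multiplying the two factors yields exactly the claimed inequality, with a constant depending only on $\upkappa,\upnu,\uplambda$ (the dependence on the fixed final exponent $q$ being absorbed, since $q$ itself is determined by $\upkappa,\upnu,\uplambda$).

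The main obstacle is the bookkeeping of exponents: one must verify that the window $1<\tau<\upnu/(\upnu-2)$ is not empty (true because $\upnu>2$) and that it is wide enough, after passing to $\tau^*$, to accommodate both $q>2$ and $q\tau^*\le p_+$ simultaneously. The delicate point is that $p_+$ from \pref{prop:harnack}‑iii may a priori be barely larger than $1$, forcing $\tau^*$ very large and hence $\tau$ very close to $1$; this is harmless for the $\Aun$ bound but one should record that the resulting $q$ (however close to $2$ it ends up) is still $>2$, which is all the statement requires. A secondary technical check is that $x\mapsto G_o(x)$ need not be harmonic on $3B$ (it is only superharmonic, and singular at $o$), so \lref{lem:GreenA1} — rather than the Harnack inequality — is the correct tool for the Green factor, and no regularity of $G_o$ beyond its $\Aun$‑property is used.
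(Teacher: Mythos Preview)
Your H\"older-splitting approach has a genuine gap in the exponent bookkeeping, and it cannot be repaired without an additional ingredient.

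Write out the constraints: you need $1<\tau<\upnu/(\upnu-2)$ so that \lref{lem:GreenA1} applies, which forces $\tau^*>\upnu/2$; and you need $q>2$ together with $q\tau^*\le p_+$, which forces $p_+>2\tau^*>\upnu$. But \pref{prop:harnack}-iii) only guarantees some $p_+>2$ (the statement says $p_+>1$, but the inequality is vacuous below $2$; in any case the paper uses $q\le p_+$ with $q>2$), and there is no reason for $p_+>\upnu$ in general. Your sentence ``choose $\tau$ close enough to $1$ that $p_+/\tau^*>2$'' is backwards: as $\tau\to1^+$ one has $\tau^*\to\infty$ and hence $p_+/\tau^*\to0$. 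So no admissible choice of $(\tau,q)$ exists unless $p_+$ happens to exceed $\upnu$.

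The paper's proof uses a different mechanism and an extra input that you have not invoked. It first reduces, exactly as in the proof of \lref{lem:GreenA1}, to balls $B=B(o,r)$ centered at the pole; for balls far from $o$ the Harnack inequality for $G_o$ already suffices. On a ball centered at $o$, the Green-kernel estimate \eqref{estiGreen} splits $\fint_B G_o|d\psi|^q$ into a ``tail'' part $\big(\inf_B G_o\big)\fint_B|d\psi|^q$, handled directly by \pref{prop:harnack}-iii), and a ``singular'' part
\[
\frac{1}{\mu(B)}\int_0^r s\left(\fint_{B(o,s)}|d\psi|^q\,d\mu\right)ds.
\]
For this term the reverse H\"older (iii) is combined with the \emph{scaling estimate} \pref{prop:harnack}-ii), which gives $\fint_{B(o,2s)}|d\psi|^2\le C(r/s)^{2-2\alpha}\fint_{2B}|d\psi|^2$; the resulting $s$-integral $\int_0^r s(r/s)^{q(1-\alpha)}\,ds$ converges provided $q(1-\alpha)<2$. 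The choice of $q$ is therefore governed by $2<q\le p_+$ and $q<2/(1-\alpha)$, which is always possible since $p_+>2$ and $\alpha>0$. The ingredient missing from your plan is precisely \pref{prop:harnack}-ii), the H\"older-regularity/scaling of the gradient; without it the singularity of $G_o$ at $o$ cannot be absorbed.
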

\proof Proceeding as in the proof of \lref{lem:GreenA1} and provided ${q}\le p_+$ where $p_+$ is given \pref{prop:harnack}, it is enough to prove the result for balls centered at $o$. So assume that $B=B(o,r)$.
Let ${q}>2$ be such that for the constants $\alpha\in (0,1]$ and $p_+>2$ given by \pref{prop:harnack}:
$${q}\le p_+\text{ and } {q}(1-\alpha)<2.$$
Using the Green kernel estimate \eqref{estiGreen} and proceeding as before, we get for any harmonic function $\psi\colon 3B\rightarrow \R$:
\begin{equation}\label{first1}\fint_B G_o(y) |d\psi|^{q}d\mu\le C\inf_B G_o(y)\fint_{B}  |d\psi|^{q}d\mu+C \textbf{II}\end{equation}
with \begin{align*}
\mu(B)\textbf{II}&= \int_0^r s\left(\fint_{B(o,s)} |d\psi|^{q}d\mu\right) ds\\
&\le  \int_0^r s\left(\fint_{B(o,2s)} |d\psi|^2d\mu\right)^{\frac{q}2} ds \text{  using \pref{prop:harnack}.iii)}\\
&\le  \int_0^r s\left(\frac{r}{s}\right)^{{q}(1-\alpha)} ds\left(\fint_{B(o,2r)} |d\psi|^2d\mu\right)^{\frac{q}2}\text{  using \pref{prop:harnack}.ii)}\end{align*}
Hence \begin{equation}\label{eq:second}
\textbf{II}\le  C \frac{r^2}{\mu(B)}\left(\fint_{B(o,2r)} |d\psi|^2d\mu\right)^{\frac{q}2}\le C \inf_B G_o  \left(\fint_{B(o,2r)} |d\psi|^2d\mu\right)^{\frac{q}2},
\end{equation}
where we have used  \eqref{estiGreen0}:  $$\frac{r^2}{\mu(B)}\le C \inf_B G_o.$$
Hence the inequality \eqref{first1}, the reverse Hölder inequality
$$\fint_B  |d\psi|^{q}d\mu\le C  \left(\fint_{2B}  |d\psi|^2d\mu\right)^{\frac{q}2}$$ 
and \eqref{eq:second}
yields that \begin{equation}\label{eq:premier}\fint_B G_o(y) |d\psi|^{q}d\mu\le C\inf_B G_o(y)\, \left(\fint_{2B}  |d\psi|^2d\mu\right)^{\frac{q}2}.\end{equation}

\endproof
This lemma has the following crucial consequence:\begin{prop}\label{prop:RH} Let $(M,g,\mu)$ be a complete non-parabolic weighted Riemannian manifold  satisfying the doubling condition \eqref{doubling} and the Poincaré inequalities \eqref{def:P}. There is some ${q}>2$ and some positive constant $C$ that depend only on $\upkappa,\upnu,\uplambda$ such that for any equilibrium potential $h\colon M\rightarrow \R$, any $\tau\in [0,1]$, any ball $B\subset M$ and any harmonic function $\psi\colon 3B\rightarrow \R$
$$\frac{1}{\fint_B h^\tau}\fint_B h^\tau |d\psi|^{q}d\mu\le C  \left(\frac{1}{\fint_{2B} h^\tau}\fint_{2B}  h^\tau |d\psi|^2d\mu\right)^{\frac{q}2}.$$
\end{prop}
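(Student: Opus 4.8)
The plan is to deduce this "$h^\tau$-weighted reverse Hölder inequality" from the Green-function version \lref{lem:GHolder} by representing $h^\tau$ through the equilibrium measure, exactly in the spirit of the passage from \lref{lem:GreenA1} to \pref{prop:hA1}. Write $h(x)=\int_M G(x,y)\,d\nu_\cU(y)$ for the equilibrium measure $\nu_\cU$ of the bounded open set whose equilibrium potential is $h$. Since $\tau\in[0,1]$, the function $h^\tau$ is superharmonic (\pref{prop:superh}); the first step is to record a pointwise control relating $h^\tau(x)$ to $G_o^\tau(x)$-type quantities that is good enough to transfer integral estimates. Concretely, I would combine the $\Aun$-weight property of $h^\tau$ (\pref{prop:hA1}, valid since $\tau\le 1<\upnu/(\upnu-2)$ by Remark \ref{nu2}) and of each $G_o^\tau$ (\lref{lem:GreenA1}) with \pref{prop=A1} to control averages of $h^\tau$ on $B$ and $2B$ by the corresponding infima.

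The core computation goes as follows. Fix a ball $B$ and a harmonic function $\psi\colon 3B\to\R$, and let $f\ge 0$ be supported in $B$, normalized so that the $h^\tau$-weighted average of $f^{q^*}$ over $B$ equals $1$ (with $1/q+1/q^*=1$); by duality it suffices to bound $\fint_B f\,h^\tau\,|d\psi|^{q\cdot?}$... more precisely I would test the claimed inequality against such an $f$ after writing $h^\tau$ via $\nu_\cU$. Using Fubini,
\[
\fint_B f(x)\,h^\tau(x)\,|d\psi(x)|^{q}\,d\mu(x)
\]
is awkward because of the exponent $\tau$ inside $h^\tau=\big(\int G\,d\nu_\cU\big)^\tau$, not $\int G^\tau d\nu_\cU$. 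The clean way around this is: first prove the statement for $\tau=1$ directly from \lref{lem:GHolder} by Fubini in $\nu_\cU$ (each slice $y\mapsto\fint_B G(\cdot,y)|d\psi|^q$ is controlled by $\inf_B G(\cdot,y)\,(\fint_{2B}|d\psi|^2)^{q/2}$, and integrating against $d\nu_\cU$ reconstructs $h$ on the left and $\inf_B h$ on the right, since $\inf_B\int G(\cdot,y)d\nu_\cU(y)\ge\int\inf_B G(\cdot,y)\,d\nu_\cU(y)$ up to a Harnack/$\Aun$ constant because $\fint_B h\le C\inf_B h$). Then for general $\tau\in[0,1]$, I would use that $h^\tau$ is itself comparable on every ball to a genuine average: by \pref{prop=A1}.ii), $\inf_B h^\tau\le\fint_B h^\tau\le C\inf_B h^\tau$, and similarly on $2B$, together with the elliptic Harnack inequality \pref{prop:harnack0} applied to the positive harmonic-away-from-$\cU$ function $h$ (hence to $h^\tau$ up to constants) to pass between $\inf_B h^\tau$ and $\inf_{2B} h^\tau$ via \pref{prop=A1}.iii). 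Thus both weighted averages $\frac{1}{\fint_B h^\tau}\fint_B h^\tau|d\psi|^q$ and $\frac{1}{\fint_{2B}h^\tau}\fint_{2B}h^\tau|d\psi|^2$ are, up to constants depending only on $\upkappa,\upnu,\uplambda$, comparable to $\fint_B|d\psi|^q$ and $\fint_{2B}|d\psi|^2$ respectively; the inequality then reduces to the unweighted reverse Hölder \pref{prop:harnack}.iii).

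The value $q>2$ is the one furnished by \lref{lem:GHolder} (equivalently by \pref{prop:harnack}.iii)), and the constant $C$ depends only on $\upkappa,\upnu,\uplambda$ since every ingredient — the $\Aun$ constants of $h^\tau$ and $G_o^\tau$, the Harnack constant, the reverse Hölder exponent and constant, and the doubling constants entering \pref{prop=A1} — does. The main obstacle I anticipate is the one already flagged: the exponent $\tau$ does not commute with the integral defining $h$, so one cannot simply Fubini $h^\tau$ against $\nu_\cU$; the resolution is to prove the $\tau=1$ case by that route and then treat $0\le\tau\le1$ by the comparability-of-averages argument above, which is exactly why the statement is phrased with normalized weighted averages $\frac{1}{\fint_B h^\tau}\fint_B h^\tau(\cdot)$ rather than with $\fint_B h^\tau(\cdot)$ alone. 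A secondary technical point is the passage between $\inf_B$ and $\inf_{2B}$ of $h^\tau$, where one uses that $h$ is harmonic on the complement of a fixed bounded set and, on balls meeting that set, controls $h^\tau$ crudely by its value $1$; since $h^\tau\le 1$ everywhere this causes no trouble.
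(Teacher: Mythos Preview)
Your treatment of the case $\tau=1$ via Fubini against the equilibrium measure $\nu_\cU$ and \lref{lem:GHolder} is correct and is exactly what the paper does. The gap is in the passage to general $\tau\in(0,1)$. The $\Aun$ property of $h^\tau$ only gives $\inf_B h^\tau\le\fint_B h^\tau\le C\inf_B h^\tau$; this says $h^\tau(x)\ge c\fint_B h^\tau$ for all $x\in B$, but gives \emph{no} pointwise upper bound on $h^\tau$. Hence from $\Aun$ alone you only get one direction of your comparability claim, namely $\frac{1}{\fint_B h^\tau}\fint_B h^\tau|d\psi|^q\ge c\fint_B|d\psi|^q$ (and similarly on $2B$). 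For the proposition you need the \emph{opposite} inequality on the left-hand side, and that does not follow: think of a ball $B$ on which $h^\tau$ ranges from something small up to $1$, with $|d\psi|^q$ concentrated where $h^\tau\approx 1$; then the normalized weighted average can exceed the unweighted one by a factor $(\inf_B h^\tau)^{-1}$. Your fallback to Harnack does not help either, since $h$ is harmonic only off $\bar\cU$ and the problematic balls are precisely those meeting $\cU$.

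The paper's fix is a one-line H\"older interpolation between $\tau=1$ and $\tau=0$:
\[
\fint_B h^\tau|d\psi|^q\,d\mu\le\Bigl(\fint_B h\,|d\psi|^q\,d\mu\Bigr)^{\!\tau}\Bigl(\fint_B|d\psi|^q\,d\mu\Bigr)^{\!1-\tau}.
\]
Your $\tau=1$ computation bounds the first factor by $C\,\inf_B h\cdot\bigl(\fint_{2B}|d\psi|^2\bigr)^{q/2}$, and the second factor is handled by the unweighted reverse H\"older \pref{prop:harnack}.iii). This yields $\fint_B h^\tau|d\psi|^q\le C\,\inf_B h^\tau\cdot\bigl(\fint_{2B}|d\psi|^2\bigr)^{q/2}$, after which the $\Aun$ comparisons --- now in the right direction: $\inf_B h^\tau\le\fint_B h^\tau$ and $\fint_{2B}|d\psi|^2\le(\inf_{2B}h^\tau)^{-1}\fint_{2B}h^\tau|d\psi|^2\le C\,(\fint_{2B}h^\tau)^{-1}\fint_{2B}h^\tau|d\psi|^2$ --- finish the proof.
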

\proof Let $h\colon M\rightarrow \R$ be the equilibrium potential of some bounded open set $\cU\subset M$, $\tau\in (0,1]$, $B\subset M$ a geodesic ball and let $\psi\colon 3B\rightarrow \R$ be some harmonic function.
Using Hölder inequality, we get
$$\fint_B h^\tau |d\psi|^{q}d\mu\le\left(\fint_B h |d\psi|^{q}d\mu\right)^\tau\left(\fint_B  |d\psi|^{q}d\mu\right)^{1-\tau}.$$
If $\nu_\cU$ is the equilibrium measure associated to $\cU$, we have
\begin{align*}
\fint_B h |d\psi|^{q}d\mu&=\int_M \left(\fint_B G(x,y)|d\psi|^{q}(y)d\mu(y)\right)d\nu_\cU(x)\\
&\le C\int_M \inf_{y\in B} G(x,y)d\nu_\cU(x)\left(\fint_{2B} |d\psi|^2d\mu\right)^{\frac{q}2}\\
&\le C \inf_{y\in B} h(y)\,\left(\fint_{2B} |d\psi|^2d\mu\right)^{\frac{q}2}.
\end{align*}
So that 
\begin{align*}\fint_B h^\tau |d\psi|^{q}d\mu&\le C \inf_{x\in B} h^\tau(x)\,\left(\fint_{2B} |d\psi|^2d\mu\right)^{\frac{q}2}\\
&\le C \left(\inf_{x\in B} h^\tau(x)\right) \left(\inf_{x\in 2B} h^\tau(x)\right)^{-\frac{q}2}\,\left(\fint_{2B} h^\tau |d\psi|^2d\mu\right)^{\frac{q}2}.\end{align*}

Using the fact that $h$ is a $\Aun\!\!-$weight and \pref{prop=A1}, we get that
$$\inf_{x\in B} h^\tau(x)\le \fint_B h^\tau d\mu\text{ and }  \fint_{2B} h^\tau d\mu\le C \inf_{x\in 2B} h^\tau(x).$$
Hence the result.
\endproof
\subsection{$L^2$-weighted boundedness of the Hodge projector}
\begin{thm}
\label{thm:WHodge} Let $(M,g,\mu)$ be a complete non-parabolic weighted Riemannian manifold  satisfying the doubling condition \eqref{doubling} and the Poincaré inequalities \eqref{def:P}. There are constants $\tau_+>1$ and   $C>0$  both depending only on $\upkappa,\upnu,\uplambda$ such that for any equilibrium potential $h\colon M\rightarrow \R$,
\begin{equation}\label{Hodgheimprov}
\int_M \left|\Pi(\beta)\right|^2\, h^{\tau_+} d\mu\le C \int_M \left|\beta\right|^2\, h^{\tau_+} d\mu.\end{equation}
\end{thm}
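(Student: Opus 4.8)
The strategy is to recognize that \eqref{Hodgheimprov} is exactly a weighted $L^2$-boundedness statement for the Hodge projector $\Pi = d\Delta^{-1}d^*_\mu$ on the weighted manifold $(M,g,h^{\tau_+}\mu)$, and to derive it by combining the \emph{unweighted endpoint} (the universal bound of \tref{thm:L2Whodge}, which holds for $h^\delta$ with $\delta\in(-1,1)$) with a \emph{reverse-Hölder improvement} for the gradient of harmonic functions (\pref{prop:RH}), glued together via the local theory / good-$\lambda$ machinery of Auscher–Martell. Concretely, I would set $\tau_+$ slightly bigger than $1$, write $h^{\tau_+} = h^{\tau_+/r}\cdot\big(h^{\tau_+/r}\big)^{r-1}$ for a suitable $r>1$, and note that by \pref{prop:hA1} the weight $h^{\tau_+}$ is an $\Aun$-weight (since $\tau_+ < \upnu/(\upnu-2)$, which is possible because $\upnu>2$ by Remark \ref{nu2}); hence it is in particular a Muckenhoupt weight relative to $\mu$, and \cor{cor:A2RH} gives it a self-improving reverse-Hölder exponent.

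**Key steps, in order.** First I would recall the structure of $\Pi\beta = d\varphi$ with $\Delta\varphi = d^*_\mu\beta$, and decompose $\Pi$ using the heat-semigroup resolution $\Delta^{-1} = \int_0^\infty e^{-t\Delta}\,dt$, so that $\Pi = \int_0^\infty d\,e^{-t\Delta}d^*_\mu\,dt$; the Gaussian bounds \eqref{DUE} then control the off-diagonal behaviour of each $d e^{-t\Delta}d^*_\mu$. Second, I would localize: fix a ball $B = B(z,\sqrt t)$, split $\beta = \beta\un_{4B} + \beta\un_{M\setminus 4B}$, and treat the far piece by the Gaussian decay (its contribution to $\Pi\beta$ on $B$ is, up to the $\Aun$-doubling of $h^{\tau_+}$, controlled by a weighted maximal function of $|\beta|^2 h^{\tau_+}$ — this is the standard Auscher–Martell far-tail estimate, and $\Aun$ gives precisely the needed comparison $\fint_B h^{\tau_+} \lesssim \inf_{2B} h^{\tau_+}$ from \pref{prop=A1}). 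Third — and this is the crux — for the near piece I would exploit that $\Pi(\beta\un_{4B})$ differs from $d\psi$ for an honest harmonic function $\psi$ on $2B$ only by a controlled error, and then invoke \pref{prop:RH}: the reverse-Hölder inequality with exponent $q>2$ for $|d\psi|$ against the $\Aun$-weight $h^\tau$. Fourth, having both a far-tail estimate by a weighted maximal function and a near-estimate by a higher-exponent average, I would feed this into the Auscher–Martell abstract good-$\lambda$ / extrapolation theorem (\cite{AMadv}) applied on the space $(M,g,h^{\tau_+}\mu)$ — whose doubling and Poincaré-type structure is inherited from $(M,g,\mu)$ because $h^{\tau_+}\in\Aun$ — to upgrade the universal $L^2_{h^\delta\mu}$ bound for $\delta<1$ past the endpoint $\delta=1$ to some $\tau_+>1$, with constant depending only on $\upkappa,\upnu,\uplambda$ (through the $\Aun$-constant of $h^{\tau_+}$, which \pref{prop:hA1} bounds uniformly over all equilibrium potentials).

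**Main obstacle.** The delicate point is the third step: replacing $\Pi(\beta\un_{4B})$ on $B$ by the gradient of a genuine harmonic function so that \pref{prop:RH} applies. One has $\Pi(\beta\un_{4B}) = d u$ where $\Delta u = d^*_\mu(\beta\un_{4B})$, and $d^*_\mu(\beta\un_{4B})$ is supported in $4B$, so $u$ is harmonic on $M\setminus 4B$ — not on a neighbourhood of $B$. The fix is the usual one: write $u = u_{\mathrm{loc}} + \psi$ where $u_{\mathrm{loc}}$ solves a Dirichlet problem on $4B$ with the same source (so $\|d u_{\mathrm{loc}}\|$ is controlled in $L^2(4B)$ by Cauchy–Schwarz against $\int_{4B}|\beta|^2$ and the Poincaré/Faber–Krahn inequality) and $\psi$ is harmonic on the interior of $4B\supset 3B$; then apply \pref{prop:RH} to $\psi$ and absorb $u_{\mathrm{loc}}$ using the already-known $L^2$-boundedness. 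Keeping track of the weight $h^{\tau_+}$ through this splitting — in particular using the $\Aun$-property at every comparison of averages over $B$, $2B$, $4B$ — and verifying that the resulting constants depend only on $\upkappa,\upnu,\uplambda$ is where the real work lies; the extrapolation in step four is then formal once the two ingredients (weighted far-tail bound, weighted reverse-Hölder near-bound) are in place.
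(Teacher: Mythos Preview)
Your proposal assembles the right ingredients --- the universal weighted $L^2$ bound of \tref{thm:L2Whodge}, the $\Aun$ property of $h^\tau$ from \pref{prop:hA1}, the reverse H\"older inequality of \pref{prop:RH}, and the Auscher--Martell machinery --- but the way you glue them together has a genuine gap.

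The Auscher--Martell theorem \cite[Theorem 3.14]{AMadv} you want to invoke requires, on a fixed doubling base space, a boundedness input at some exponent $p_0$ \emph{strictly below} the target exponent $p$, together with the off-diagonal reverse H\"older from $p_0$ to some $q_0>p$. You have $L^2(h^\delta\mu)$-boundedness for $\delta<1$ and the reverse H\"older of \pref{prop:RH} (which is only stated for $\tau\in[0,1]$), but you never produce a $p_0<2$ endpoint on any weighted space. Without it, the extrapolation cannot land back at $p=2$ with a \emph{larger} weight $h^{\tau_+}$; the scheme ``start at $L^2(h^\delta\mu)$, push past $\delta=1$'' is not what Auscher--Martell does. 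Your proposal to run the argument directly on $(M,g,h^{\tau_+}\mu)$ compounds this: on that space you have no base estimate at all, and \pref{prop:RH} does not apply to $h^{\tau_+}$ with $\tau_+>1$.

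The paper fixes exactly this. It first uses the Auscher--Coulhon result that $\Pi$ is bounded on (unweighted) $L^{p_-}$ for some $p_-<2$, then Stein-interpolates this against the weighted $L^2(h^{\sqrt\sigma}\mu)$ bound from \tref{thm:L2Whodge} to obtain $L^p(h^\sigma\mu)$-boundedness for some $p<2$ and $\sigma<1$. Now one works on the base space $(M,d_g,h^\sigma\mu)$: since $\sigma\le 1$, \pref{prop:RH} furnishes the required reverse H\"older, and Auscher--Martell applies with the extra weight $\omega=h^\delta$ (checked to be in the right $A_2\cap RH$ class via \pref{prop:hA1} and \cref{cor:A2RH}), yielding boundedness on $L^2(h^{\sigma+\delta}\mu)=L^2(h^{\tau_+}\mu)$ with $\tau_+=\sigma+\delta>1$. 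The interpolation step and the choice to work on $(M,h^\sigma\mu)$ with $\sigma<1$ rather than $(M,h^{\tau_+}\mu)$ are precisely the missing ideas.

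A secondary point: in your localization, the roles of near and far pieces are reversed. In the Auscher--Martell setup the reverse H\"older is applied to $\Pi(\beta)$ when $\beta$ is supported in $M\setminus 3B$ --- then $\Pi\beta=d\psi$ with $\psi$ harmonic on $3B$, and \pref{prop:RH} applies directly. The near piece $\beta\un_{4B}$ is absorbed by the $L^{p_0}$-boundedness. Your Dirichlet-splitting workaround for the near piece is not wrong, but it is unnecessary once the roles are set straight.
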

\proof
Let $h\colon M\rightarrow \R$ be some equilibrium potential.

According to Auscher and Coulhon \cite{ACPisa}, there are  $p_-<2$ and a constant $C$ both depending only on $\upkappa,\upnu,\uplambda$ such that for any $r\in [p_-,p_-^*]$, the Hodge projector has a bounded extension to $L^r(T^*M)$ with 
$$\left\|\Pi\right\|_{L^r\to L^r}\le C.$$
Let $\delta:=\frac{q-2}{q}\frac{1}{\upnu-2}$ where $q$ is given by  \pref{prop:RH} and 
let $p<2$ be given by
$$\frac{1}{p}=\frac{\sqrt{1-\delta/2}}{2}+\frac{1-\sqrt{1-\delta/2}}{p_-}$$
and $\sigma\in (0,1)$ given by
$$\sigma=1-\frac{\delta}{2}=\left(\sqrt{1-\frac{\delta}{2}}\right)^2.$$ 
 Interpolating \cite[Theorem 2]{Stein_1956} between the boundedness
$$\left\|\Pi\right\|_{L^{p_-}\to L^{p_-}}\le C$$ and the weighted $L^2$ boundedness
given by \tref{thm:L2Whodge}
$$\forall \beta \in \cC^\infty_c(T^*M)\colon
\int_M \left|\Pi(\beta)\right|^2 \, h^{\sqrt{\sigma}} d\mu\le \left(\frac{1+\sqrt{\sigma} }{1- \sqrt{\sigma} }\right)^2\  \int_M \left|\beta\right|^2 \, h^{\sqrt{\sigma}}  d\mu,$$
we deduce the weighted $L^p$ boundedness
$$\forall \beta \in \cC^\infty_c(T^*M)\colon
\int_M \left|\Pi(\beta)\right|^p \, h^{\sigma} d\mu\le C_1 \  \int_M \left|\beta\right|^p \, h^{\sigma}  d\mu.$$
Notice that \pref{prop:RH} yields that for any ball $B\subset M$ and any $\beta\in L^2(T^*M)$ with support in $M\setminus 3B$ then
$$\left(\frac{1}{\int_{B} h^{\sigma} d\mu}\int_{B} \left|\Pi(\beta)\right|^q \, h^{\sigma} d\mu\right)^{\frac 1q}\le C_2\left(\frac{1}{\int_{2B} h^{\sigma} d\mu}\int_{2B} \left|\Pi(\beta)\right|^p \, h^{\sigma} d\mu\right)^{\frac 1p}.$$
The measure $h^\sigma d\mu$ is doubling hence we can use  a result of Auscher and Martell \cite[Theorem 3.14]{AMadv} and we know that if $\omega$ is a positive locally integrable weight satisfying for some positive constant $D$:
\begin{itemize}
\item  $\omega$ is a $A_2-$weight on $(M,d_g,h^\sigma d\mu)$: for any ball $B\subset M$ then
$$\left(\frac{1}{\int_{B} h^{\sigma} }\int_B \omega \,h^\sigma d\mu\right)\left(\frac{1}{\int_{B} h^{\sigma} }\int_B \omega^{-1} h^\sigma d\mu\right)\le D,$$ 
\item  $\omega$ is a reverse Hölder weight of exponent $(q/2)^*=\frac{q}{q-2}$ on $(M,d_g,h^\sigma d\mu)$: 
$$ \left(\frac{1}{\int_B h^\sigma}\int_B \omega^{\frac{q}{q-2}} h^\sigma d\mu\right)^{1-\frac 2q}\le D \frac{1}{\int_B h^\sigma}\int_B \omega\, h^\sigma d\mu,$$
\end{itemize}
then for a constant $C$ depending only on $\upkappa,\upnu,C_1, C_2$ and $D$, we get the weighted $L^2$ boundedness of the Hodge projector:
$$\forall \beta \in \cC^\infty_c(T^*M)\colon
\int_M \left|\Pi(\beta)\right|^2 \, \omega h^{\sigma} d\mu\le C \  \int_M \left|\beta\right|^2 \, \omega h^{\sigma}  d\mu.$$
From \cref{cor:A2RH}, we know that  $\omega=h^a$ satisfies these conditions when $h^{\sigma+\frac{q}{q-2}a}$ is a 
$\Aun\!\!-$ weight on $(M,d_g, \mu)$, hence it is the case when $$\sigma+\frac{q}{q-2}a\le \frac{\upnu}{\upnu-2}.$$
Considering $\omega=h^{\delta}$, then $$h^\sigma\omega^\frac{q}{q-2}=h^{1+\left(\frac12+\frac1q\right)\frac{1}{\upnu-2}}$$ is a $\Aun\!\!-$ weight on $(M,d_g, d\mu)$.Hence the result of Auscher and Martell implies the uniform  weighted $L^2$ boundedness of the Hodge projector \eqref{Hodgheimprov}  for $\tau_+=1+\delta/2$.
\endproof

\section{The Maz'ya-Vertbisky problem}
In this section we will prove \tref{thm:MV}.
\subsection{The direct implication} We start by the easiest implication and according to Maz'ya and Vertbisky, this idea comes for \cite[Lemma2.1]{Combescure1976}. 

\begin{prop} Let $(M,g,\mu)$ be a complete non-parabolic weighted Riemannian manifold and $\theta\in L^2_{loc}(T^*M)$ such that for some positive constant $\textbf{A}$:
$$\forall \varphi\in \cC^\infty_c(M)\colon \int_M |\theta|^2\varphi^2\,d\mu\le \,\textbf{A}^2\, \int_M |d\varphi|^2\,d\mu$$
then the distribution $V=d^*_\mu\theta$ satisfies
$$\forall \varphi,\phi\in \cC^\infty_c(M)\colon\left|\la V, \varphi\phi\ra\right|\le 2\textbf{A} \|d\varphi\|_{L^2_\mu}\,\|d\phi\|_{L^2_\mu}.$$
\end{prop}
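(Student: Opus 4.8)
The plan is to expand $\langle V,\varphi\phi\rangle$ by means of the defining property of $d^*_\mu\theta$ as a distribution, apply the Leibniz rule to $d(\varphi\phi)$, and estimate the two resulting terms by the Cauchy--Schwarz inequality together with the hypothesis on $\theta$.

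First I would note that for $\varphi,\phi\in\cC^\infty_c(M)$ one has $\varphi\phi\in\cC^\infty_c(M)$, so by the characterisation of the equation $d^*_\mu\theta=V$ recalled just after the statement of \tref{thm:MV},
$$\langle V,\varphi\phi\rangle=\int_M\langle\theta,d(\varphi\phi)\rangle\,d\mu .$$
Since $\theta\in L^2_{\loc}(T^*M)$ and $\varphi,\phi$ are smooth with compact support, every integrand below is integrable on $M$, so the product rule $d(\varphi\phi)=\phi\,d\varphi+\varphi\,d\phi$ legitimately gives
$$\langle V,\varphi\phi\rangle=\int_M\langle\theta,d\varphi\rangle\,\phi\,d\mu+\int_M\langle\theta,d\phi\rangle\,\varphi\,d\mu .$$

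Next I would bound the two terms separately. By Cauchy--Schwarz and then the hypothesis applied to the function $\phi$,
$$\left|\int_M\langle\theta,d\varphi\rangle\,\phi\,d\mu\right|\le\left(\int_M|d\varphi|^2\,d\mu\right)^{1/2}\left(\int_M|\theta|^2\phi^2\,d\mu\right)^{1/2}\le\textbf{A}\,\|d\varphi\|_{L^2_\mu}\,\|d\phi\|_{L^2_\mu},$$
and by the same argument with the roles of $\varphi$ and $\phi$ exchanged,
$$\left|\int_M\langle\theta,d\phi\rangle\,\varphi\,d\mu\right|\le\textbf{A}\,\|d\varphi\|_{L^2_\mu}\,\|d\phi\|_{L^2_\mu}.$$
Adding these two inequalities yields $\left|\langle V,\varphi\phi\rangle\right|\le 2\textbf{A}\,\|d\varphi\|_{L^2_\mu}\,\|d\phi\|_{L^2_\mu}$, which is the assertion. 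There is no real obstacle here: the pairing $\langle V,\varphi\phi\rangle$ is defined precisely by the integral formula used in the first step, so no regularisation is needed, and the hypothesis on $\theta$ enters exactly at the two places indicated. Taking $\varphi=\phi$ in particular recovers the estimate \eqref{VbounM} with constant $2\textbf{A}$, which is the easy (``if'') direction of \tref{thm:MV}.
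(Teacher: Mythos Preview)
Your proof is correct and follows exactly the same route as the paper: expand $\langle V,\varphi\phi\rangle=\int_M\langle\theta,d(\varphi\phi)\rangle\,d\mu$ via the Leibniz rule, then apply Cauchy--Schwarz and the hypothesis to each of the two terms. The paper's argument is essentially identical, just slightly more compressed.
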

\proof  By definition:
$$\la V, \varphi\phi\ra=\int_M\la \theta,d(\varphi\phi)\ra\,d\mu=\int_M\la \theta,\phi d\varphi\ra\,d\mu+\int_M\la \theta,\varphi d\phi\ra\,d\mu.$$ And using the Cauchy-Schwartz inequality and the hypothesis, one gets:
\begin{align*}\left|\la V, \varphi\phi\ra\right|&\le \| \phi\theta\|_{L^2_\mu}\, \|d\varphi\|_{L^2_\mu}+\| \varphi\theta\|_{L^2_\mu}\, \|d\phi\|_{L^2_\mu}\\
&\le \textbf{A} \|d\phi\|_{L^2_\mu}\|d\varphi\|_{L^2_\mu}+\textbf{A} \|d\varphi\|_{L^2_\mu}\|d\phi\|_{L^2_\mu}. \end{align*}
\endproof
\subsection{} In order to be able to prove a reciprocal, we start by the following notion
\begin{defi}\label{def:wpara} If $(M,g,\mu)$ is a complete weighted Riemannian manifold, a non negative, locally integrable function $\omega$ is said to be a {\it parabolic} weight if there is a sequence $\chi_\ell\in \cC^\infty_c(M)$ such that 
\begin{equation}\label{paracutoff}\begin{cases}
0\le \chi_\ell\le  1&\text{ every where on }M\\
\lim_{\ell\to+\infty} \chi_\ell= 1&\text{  uniformly on compact set }\\
\lim_{\ell\to+\infty}\int_M |d\chi_\ell|^2\,\omega d\mu=0&\end{cases}\end{equation}
\end{defi}
\begin{rem}\label{rem:compar}
When $\omega$ is positive, this is equivalent to the parabolicity of the weighted Riemannian manifold $(M,g,\omega\mu)$. It is clear that if $\omega_1\le \omega_2$, then the parabolicity of $\omega_2$ implies the parabolicity of $\omega_1$. 
\end{rem}
Our first result is a refinement of \cite[Proposition 2.27]{carkato}:
\begin{lem}\label{lem:wpara} If $(M,g,\mu)$ is  a complete non-parabolic weighted Riemannian manifold satisfying the doubling condition \eqref{doubling} and the Poincaré inequalities $\eqref{def:P}$ then for any $o\in M$, $\min\{ G_o,1\}$ is a parabolic weight.
\end{lem}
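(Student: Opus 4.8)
The plan is to construct the cutoff sequence $\chi_\ell$ explicitly from the Green function $G_o$, exploiting the decay estimate \eqref{estiGreen} together with the doubling condition. Since $G_o$ is smooth and positive away from $o$, and $\min\{G_o,1\}$ equals $1$ near $o$, the only issue is to cut off near infinity, where $G_o$ is small; so rather than cutting off $\min\{G_o,1\}$ directly, I would use a level-set cutoff of $G_o$ itself. Concretely, fix a smooth function $F\colon(0,+\infty)\to[0,1]$ that is $1$ on $[2,+\infty)$ and $0$ on $(0,1]$, and set $\chi_\ell = F(\ell\, G_o)$ away from $o$ and $\chi_\ell = 1$ on a small fixed ball around $o$; this is compactly supported because $\{G_o \ge 1/\ell\}$ is bounded (by the lower bound \eqref{estiGreen0}, $\{G_o \ge 1/\ell\} \subset B(o, R_\ell)$ with $R_\ell$ finite), it is smooth (away from $o$, $G_o$ is smooth and on the support of $dF(\ell G_o)$ we stay away from $o$ for $\ell$ large), it takes values in $[0,1]$, and it converges to $1$ uniformly on compact sets since $G_o$ has a positive lower bound on any compact set avoiding $o$ and near $o$ we have $\chi_\ell=1$ identically.

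The main estimate to check is then
\[
\int_M |d\chi_\ell|^2 \,\min\{G_o,1\}\, d\mu \longto 0.
\]
On the support of $d\chi_\ell$ we have $1/\ell \le G_o \le 2/\ell$, so $\min\{G_o,1\} \le 2/\ell$ there (for $\ell\ge 2$), and $|d\chi_\ell| = |F'(\ell G_o)|\,\ell\,|dG_o| \le C\ell |dG_o|$. Hence the integral is bounded by
\[
C\ell^2 \cdot \frac{2}{\ell}\int_{\{1/\ell \le G_o \le 2/\ell\}} |dG_o|^2\, d\mu
= 2C\ell \int_{\{1/\ell \le G_o \le 2/\ell\}} |dG_o|^2\, d\mu.
\]
Now I use the coarea-type identity already exploited in the proof of \pref{prop:htau}: for regular values $0<a<b$,
\[
\int_{\{a<G_o<b\}} |dG_o|^2 \, d\mu = \int_a^b \left(\int_{G_o=x} |dG_o|\, d\sigma_x\right) dx = b - a,
\]
since $\int_{G_o=x}|dG_o|\,d\sigma_x = 1$ for every regular value $x$ (Green's formula plus the normalization of $G_o$ at its pole, exactly as in \pref{prop:htau}; this uses that $G_o$ is harmonic off $o$). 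Applying this with $a=1/\ell$, $b=2/\ell$ gives $\int_{\{1/\ell\le G_o\le 2/\ell\}}|dG_o|^2\,d\mu = 1/\ell$, so the whole expression is bounded by $2C\ell \cdot (1/\ell) = 2C$ — which is only bounded, not going to $0$.

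To gain the decay I would refine the cutoff: instead of a single dyadic band, spread the transition over $k$ dyadic scales. Take $\chi_\ell$ to interpolate from $0$ to $1$ as $G_o$ ranges over $[2^{-\ell-k}, 2^{-\ell}]$, chosen so that $\log \chi_\ell$ (or a suitable primitive) is linear in $\log G_o$; equivalently pick $\chi_\ell = \eta(\ell^{-1}\log(1/G_o))$ for a fixed cutoff $\eta$. Then on the transition region $|d\chi_\ell| \le \frac{C}{\ell}\,\frac{|dG_o|}{G_o}$, and
\[
\int_M |d\chi_\ell|^2 \min\{G_o,1\}\, d\mu \le \frac{C}{\ell^2}\int_{\{\text{transition}\}} \frac{|dG_o|^2}{G_o^2}\, G_o\, d\mu = \frac{C}{\ell^2}\int_{\{2^{-\ell k}\le G_o\le 1\}} \frac{|dG_o|^2}{G_o}\, d\mu.
\]
The coarea identity gives $\int_{\{a<G_o<1\}} G_o^{-1}|dG_o|^2\,d\mu = \int_a^1 x^{-1}\,dx = \log(1/a)$, so with $a = 2^{-\ell k}$ this is $\le \frac{C}{\ell^2}\cdot \ell k\log 2 = \frac{Ck\log 2}{\ell} \to 0$ as $\ell\to\infty$. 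The support is still compact since $\{G_o \ge 2^{-\ell k}\}$ is bounded by \eqref{estiGreen0}, and $\chi_\ell \to 1$ uniformly on compacts as before. The one technical point requiring care — and the step I expect to be the real obstacle — is the justification of the coarea/Green's-formula identity $\int_{G_o=x}|dG_o|\,d\sigma_x = 1$ and the smoothing of $\min\{G_o,1\}$ into a genuinely $\cC^\infty_c$ function: one must handle irregular values of $G_o$ (Sard), approximate by smooth cutoffs, and use \pref{prop:htau} or a direct approximation argument to pass from the Lipschitz function built above to functions in $\cC^\infty_c(M)$ while preserving the energy bound; the doubling and Poincaré hypotheses enter only through \eqref{estiGreen}–\eqref{estiGreen0} guaranteeing the sublevel sets are bounded, which is where non-parabolicity alone would not suffice.
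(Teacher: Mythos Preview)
Your approach is essentially the same as the paper's: a cutoff of the form $u\bigl(c\log G_o/\log\ell\bigr)$ (the paper takes $\chi_\ell=u\bigl(\log(G_o\ell^2)/\log\ell\bigr)$, transition region $\{\ell^{-2}<G_o<\ell^{-1}\}$), followed by the coarea identity $\int_{\{G_o=x\}}|dG_o|\,d\sigma_x=1$ to get $\int|d\chi_\ell|^2\min\{G_o,1\}\,d\mu\le C/\log\ell$. Your refined construction $\eta(\ell^{-1}\log(1/G_o))$ is exactly this idea; only the bookkeeping with the auxiliary parameter $k$ is muddled (you introduce $k$ as a fixed number of dyadic scales, then write the transition region as $\{2^{-\ell k}\le G_o\le 1\}$, whereas the formula $\eta(\ell^{-1}\log(1/G_o))$ gives $\{e^{-\ell}\le G_o\le 1\}$ and $k$ should simply be dropped), but the computation and conclusion are correct.
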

\proof Recall \ref{estiGreen}:
\begin{equation}\label{restiGreen}
C^{-1} \int_{d(x,o)}^{+\infty} \frac{r}{\mu(B(o,r))} dr\le G_o(x)\le C \int_{d(x,o)}^{+\infty} \frac{r}{\mu(B(o,r))} dr,\end{equation}
hence the non parabolicity implies  the finiteness of integral :
$$\int_{1}^{+\infty} \frac{r}{\mu(B(o,r))} dr<+\infty$$ and  that 
$$\lim_{x\to+\infty} G_o(x)=0.$$ 
We let $u\colon\R\rightarrow [0,1]$ be a smooth function such that 
$$\begin{cases} u(t)=0&\text{ if } t\le 0\\
u(t)=1&\text{ if } t\ge 1\\
|u'(t)|\le 2&\text{ everywhere}.\end{cases}$$ and define
$$\chi_\ell(x)=u\left(\frac{\log (G_o(x) \ell^2)}{\log(\ell)}\right).$$ The estimates \eqref{restiGreen} implies that 
$\chi_\ell \cC_c^\infty(M)$ and
$$\lim_{\ell\to+\infty} \chi_\ell= 1\text{  uniformly on compact set}.$$
We have the following estimations
\begin{align*}
\int_M |d\chi_\ell|^2\,\min\{ G_o,1\} d\mu&\le4\log(\ell)^{-2}\int_{\{\ell^{-2}<G_o<\ell^{-1}\}} \frac{|dG_o|^2}{G_o}d\mu\\
&\le 4\log(\ell)^{-2}\int_{1/\ell^2}^{1/\ell} \left(\int_{\{G_o=x\}} |dG_o|\right)\frac{dx}{x}\\
&\le 4\log(\ell)^{-1},
\end{align*}
where we have used that for almost every $x$
$$\int_{\{G_o=x\}} |dG_o|=1.$$
Hence $\min\{ G_o,1\}$ is  a parabolic weight.
\endproof

We are now in position to finish the proof of our main result.
\begin{proof}[Proof of the reciprocal in \tref{thm:MV}:]
Let  $(M,g,\mu)$ be a complete non-parabolic weighted Riemannian manifold satisfying the the doubling condition \eqref{doubling} and the Poincaré inequalities $\eqref{def:P}$. And let $V$ be a distribution such that 
$$\forall \varphi,\phi\in \cC^\infty_c(M)\colon\left|\la V, \varphi\phi\ra\right|\le \textbf{A} \|d\varphi\|_{L^2_\mu}\,\|d\phi\|_{L^2_\mu}.$$
We want to define a distribution $\theta\in \cC^{-\infty}(T^*M)$ by the following relation: let $\beta\in \cC^\infty_c(T^*M)$ and define $\varphi_\beta\in \hW(M)$ to be  the solution of the equation
$$\Delta\varphi_\beta=d^*_\mu\beta$$ so that $\Pi(\beta)=d\varphi_\beta$ and let
$$\la \theta,\beta\ra=\la V,\varphi_\beta\ra.$$
We need to justify that such definition makes sense.

 Let $\cU$ be a bounded open subset containing the support of $\beta$ and let $h$ be its equilibrium potential.
Recall that by \pref{prop:harnack}-i), there is some $\alpha\in (0,1]$ such that we get the uniform  Hölder regularity for harmonic functions. \\
\noindent\textbf{ Claim: } if $\tau\in [1,1+2\alpha/(\upnu-2)]$,  then $h^{-\tau}\varphi^2_\beta$ is a parabolic weight.
\begin{proof}[Proof of the Claim] Let $o\in \cU$ and $R>0$ such that $\cU\subset B(o,R)$. According to the maximum principle, we have that on $M\setminus \cU$:
\begin{equation}
\label{estihG}\frac{1}{\max_{\partial \cU}G_o} G_o\le h\le \frac{1}{\min_{\partial \cU}G_o} G_o.\end{equation}
Moreover as $\int_M d_\mu^*\beta d\mu=0$, we have 
$$\varphi_\beta(x)=\int_{B(o,R)}\left(G(x,y)-G(x,o)\right) d^*_\mu\beta(y)d\mu(y).$$

Now if $x\not\in B(o,6R)$ then $y\mapsto G(x,y)$ is a harmonic function on $B(o,d(o,x)/2)$ hence \pref{prop:harnack} implies that for any $y\in B(o,R)\colon$
\begin{align}\
\left| G(x,y)-G(x,o)\right|&\le C\left(\frac{R}{d(o,x)}\right)^\alpha\sup_{z\in B(o,d(o,x)/3)} G(x,z)\notag\\
&\le C\left(\frac{R}{d(o,x)}\right)^\alpha G(x,o),\label{eq:est1}
\end{align}
where in the last inequality, we used the Harnack inequality given by \pref{prop:harnack0}  for the positive harmonic function $y\in B(o,2d(o,x)/3)\mapsto G(x,y)$.

Using \eqref{estiGreen3} and the doubling hypothesis, we also know that if  $x\not\in B(o,6R)$ then 
\begin{equation}\label{eq:est2}G(o,x)\ge C \frac{d^2(o,x)}{\mu\left(B(o,d(o,x))\right)}\ge C \frac{d^2(o,x)}{\mu\left(B(o,R\right)}\left(\frac{R}{d(o,x)}\right)^\upnu=C \frac{R^\upnu}{\mu\left(B(o,R)\right)}\frac{1}{d^{\upnu-2}(o,x)}.\end{equation}
The estimates \eqref{eq:est1} and  \eqref{eq:est2} yields that outside $B(o,6R)\colon$
$$|\varphi_\beta(x)|\le  C \left(\frac{\mu\left(B(o,R)\right)}{R^2}\right)^{\frac{\alpha}{\upnu-2}}G^{1+\frac{\alpha}{\upnu-2}}_o(x)$$
and estimates \eqref{estihG} and \lref{lem:wpara} implies that if $\tau\in [1,1+2\alpha/(\upnu-2)]$,  then $h^{-\tau}\varphi^2_\beta$ is bounded by a parabolic weight, hence it is a  parabolic weight.\end{proof}

Now let $\tau>1$ be such that $\tau\le{\tau_+}$ and $\tau\le 1+\frac{2\alpha}{\upnu-2},$ where $\tau_+$ is given by \tref{thm:WHodge}. By interpolation,  the Hodge projector extends also continuously on $L^2(T^*M, h^{-\tau}d\mu)$, hence $h^{-\tau/2}d\varphi_\beta\in L^2$ with
\begin{equation}\label{estL2}
\int_M h^{-\tau} |d\varphi_\beta|^2d\mu\le C \int_{\cU} |\beta|^2\, d\mu,\end{equation}
where we use  that $h=1$ on $\supp\beta$.
The parabolicity of the weight $h^{-\tau}\varphi_\beta^2$ yields a sequence of good cut-off function $\chi_\ell\in \cC^\infty_c(M)$ such that 
$0\le \chi_\ell\le  1$, $(\chi_\ell)$ converges to the function $1$  uniformly on compact sets of $M$ and 
$$\lim_{\ell\to+\infty}\int_M |d\chi_\ell|^2\,h^{-\tau}\varphi_\beta^2 d\mu=0.$$
Using the Hardy inequality \eqref{hardyh}, we get that 
\begin{align*}
\frac{(1+\tau)^2}{4}\int_M \frac{|dh|^2}{h^2} \chi_\ell^2\varphi_\beta^2\,h^{-\tau} d\mu&\le \int_M \left|d\left(\chi_\ell \varphi_\beta\right)\right|^2\,h^{-\tau} d\mu\\
&\le 2\int_M |d\chi_\ell|^2\, \varphi_\beta^2h^{-\tau} d\mu+\int_M \chi^2_\ell |d\varphi_\beta|^2\,h^{-\tau} d\mu.
\end{align*}
Passing to the limit $\ell\to +\infty$ and using \eqref{estL2}, we get that 
\begin{equation}\label{estL2b}\int_M \frac{|dh|^2}{h^2} \varphi_\beta^2\,h^{-\tau} d\mu
\le \frac{4}{(1+\tau)^2}C \int_{\cU} |\beta|^2\, d\mu
\end{equation}
With
$$d(h^{-\tau/2}\varphi_\beta)=h^{-\tau/2}d\varphi_\beta-\frac{\tau}{2}\,h^{-\tau/2}\varphi_\beta\frac{dh}{h},$$ we get that $d(h^{-\tau/2}\varphi_\beta)\in L^2_\mu$ with
\begin{equation}\label{estL2c}
\int_M \left|d(h^{-\tau/2}\varphi_\beta)\right|^2d\mu\le C \int_{\cU} |\beta|^2\, d\mu\end{equation}
where $C$ depends only on the doubling and Poincaré constants.
Using again the sequence of the cut-off function $\chi_\ell$, we easily obtain that 
$h^{-\tau/2}\varphi_\beta\in \hW(M)$ and it is the $\hW(M)$-limit of the sequence 
$\left( \chi_\ell h^{-\tau/2}\varphi_\beta\right)_\ell$.
Noticed that the assumptions

$$\forall \varphi,\phi\in \cC^\infty_c(M)\colon\left|\la V, \varphi\phi\ra\right|\le \textbf{A} \|d\varphi\|_{L^2_\mu}\,\|d\phi\|_{L^2_\mu},$$
implies that the bilinear form $$(\varphi,\phi)\in \cC^\infty_c(M)\times \cC^\infty_c(M)\mapsto \la V, \varphi\phi\ra$$ extends continuously to a bilinear form on $\hW(M)\times \hW(M)$. 
As $h^{\tau/2}$ and $h^{-\tau/2}\varphi_\beta$ are both in $\hW(M)$, we obtain
that 
\begin{align*}
\left|\la \theta,\beta\ra\right|&=\left|\la V\,,\, \varphi_\beta\ra\right|\\
&=\left|\la V\,,\, h^{\tau/2}\ h^{-\tau/2}\varphi_\beta\ra\right|\\
&\le \textbf{A} \left\|dh^{\tau/2}\right\|_{L^2_\mu}\,\, \left\|d(h^{-\tau/2}\varphi_\beta)\right\|_{L^2_\mu}
\end{align*}
And using \eqref{estL2c} together with \pref{prop:htau}, we deduce that 
$$\left|\la \theta,\beta\ra\right|\le \textbf{A} {\frac{\tau}{\sqrt{2\tau-1}} \capa^{\frac 12}(\cU)}\ \sqrt{C} \|\beta\|_{L^2_\mu}.$$
This inequality being true for any $\beta\in \cC^\infty_c(T^*\cU)$ and any bounded open subset $\cU$, we deduce that $\theta$ is given by a locally square integrable $1-$form  and that for  any bounded open subset $\cU$:
$$\int_{\cU} |\theta|^2d\mu\le C'\textbf{A}^2 \capa(\cU)$$
where $C'$ depends only on the doubling and Poincaré constants.
Then the conclusion of follows from the implication {\bf iii)$\Rightarrow$ ii)} in \tref{thm:trace}.

\end{proof}
\appendix
\section{Trace inequalities on metric measure space}

\subsection{Setting}
We consider a metric measure space $(X,\dist,\mu)$ that is doubling i.e. for any $x\in X$ and $r<R$:
\begin{equation}\label{doubling1}
\tag{$\text{D}_{\upkappa,\upnu}$} \mu\left(B(x,R)\right)\le \upkappa \left(\frac R r\right)^\upnu \ \mu\left(B(x,r)\right),
\end{equation}
and we will note $\upgamma=\upkappa 2^{\upnu}$ so that for any $x\in X$ and $r>0$:
$$\mu\left(B(x,2r)\right)\le \upgamma\, \mu\left(B(x,r)\right).$$
We introduce the notation:  $$V(x,r):=\mu\left(B(x,r)\right).$$
It is classical to prove that the doubling condition \eqref{doubling1} implies the following control on the volume of the balls:
$$\forall x,y\in X,\forall r>0\colon V(x,r)\le \upkappa \left(1+\frac{\dist(x,y)}{r}\right)^\upnu \, V(y,r).$$
We also assume that for some $x\in X$:
$$\int_1^{+\infty} \frac{d\tau}{V(x,\tau)}<+\infty.$$
 In that case, it is easy to check that for any $y\in X$ $$\int_1^{+\infty} \frac{d\tau}{V(y,\tau)}<+\infty.$$
And we  then introduce the operator 
$$\cK\colon L^1_{c}(X,\mu)\rightarrow  L^1_{\loc}(X,\mu)$$ defined by
$$\cK f(x)=\int_0^{+\infty} \left(\fint_{B(x,\tau)} fd\mu\right)\ d\tau$$ 
and whose Schwartz kernel is 
$$K(x,y)=\int_{\dist(x,y)}^{+\infty} \frac{d\tau}{V(x,\tau)}.$$
The dual operator $\cK^*$ has Schwartz kernel given by
$$K^*(x,y)=K(y,x)=\int_{\dist(x,y)}^{+\infty} \frac{d\tau}{V(y,\tau)}.$$

The following lemma is a consequence of the doubling hypothesis:
\begin{lem}\label{Lem:Ks} For any $x,y\in X\colon$
$$\upgamma^{-1}\,  K(y,x)\le K(x,y)\le  \upgamma \,  K(y,x).$$
Moreover  we have
$$K_{1/2}(x,y)=\int_{\frac12\dist(x,y)}^{+\infty} \frac{d\tau}{V(x,\tau)}\le \, \upgamma\, K(x,y).$$
\end{lem}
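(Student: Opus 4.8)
The plan is to derive both assertions directly from the doubling property, using only the elementary inclusions between balls centered at $x$ and at $y$ together with the inequality $V(z,2r)\le \upgamma\, V(z,r)$.

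For the two-sided comparison of $K(x,y)$ and $K(y,x)$, I would set $s=\dist(x,y)$ and note that $K(y,x)=\int_s^{+\infty} d\tau/V(y,\tau)$ since $\dist(y,x)=s$, so it suffices to compare $V(x,\tau)$ and $V(y,\tau)$ for $\tau\ge s$. For such $\tau$ the triangle inequality gives $B(x,\tau)\subset B(y,\tau+s)\subset B(y,2\tau)$ (the last inclusion precisely because $\tau\ge s$), so by doubling $V(x,\tau)\le V(y,2\tau)\le \upgamma\, V(y,\tau)$; exchanging the roles of $x$ and $y$ yields $V(y,\tau)\le \upgamma\, V(x,\tau)$ as well. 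Taking reciprocals gives $\upgamma^{-1}/V(y,\tau)\le 1/V(x,\tau)\le \upgamma/V(y,\tau)$ for all $\tau\ge s$, and integrating over $\tau\in[s,+\infty)$ produces $\upgamma^{-1}K(y,x)\le K(x,y)\le \upgamma\, K(y,x)$.

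For the bound on $K_{1/2}$, I would perform the change of variable $\tau=\sigma/2$ in the integral defining $K_{1/2}(x,y)$:
$$K_{1/2}(x,y)=\int_{\frac12\dist(x,y)}^{+\infty}\frac{d\tau}{V(x,\tau)}=\frac12\int_{\dist(x,y)}^{+\infty}\frac{d\sigma}{V(x,\sigma/2)},$$
and then use the doubling inequality $V(x,\sigma)\le \upgamma\, V(x,\sigma/2)$, i.e. $1/V(x,\sigma/2)\le \upgamma/V(x,\sigma)$, to conclude
$$K_{1/2}(x,y)\le \frac{\upgamma}{2}\int_{\dist(x,y)}^{+\infty}\frac{d\sigma}{V(x,\sigma)}=\frac{\upgamma}{2}\,K(x,y)\le \upgamma\, K(x,y).$$
There is no genuine obstacle here; the only points requiring care are applying the ball inclusions at the correct radii and invoking the doubling inequality in the direction that enlarges the denominator, after which both estimates are immediate.
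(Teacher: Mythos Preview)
Your proof is correct and is precisely the elementary doubling argument the paper has in mind; the paper itself does not spell out a proof beyond noting that the lemma ``is a consequence of the doubling hypothesis,'' and your ball-inclusion plus change-of-variable steps are the standard way to unpack that remark.
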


The next estimate is merely an adaptation of \cite[Lemma 2.1]{Verbsurvol}
\begin{lem}\label{Lem:KK}For any non negative $f\in L^1_{c}(X,\mu)$ and for any $x\in X$:
$$\left|\cK f\right|^2(x)\le 2\upgamma^3 \left(\cK \left(f\cK f\right)\right)(x).$$
\end{lem}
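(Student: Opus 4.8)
The plan is to follow Verbitsky's Euclidean argument from \cite[Lemma 2.1]{Verbsurvol}, replacing the nested dyadic annuli by the layer-cake structure coming from the two radial integrals that define $\cK$. Write $\cK f(x) = \int_0^{+\infty} A_f(x,\tau)\, d\tau$ where $A_f(x,\tau) = \fint_{B(x,\tau)} f\, d\mu$, and square it to obtain
\begin{equation*}
\left|\cK f\right|^2(x) = 2\int_0^{+\infty}\!\!\int_0^{s} A_f(x,s)\, A_f(x,\tau)\, d\tau\, ds = 2\int_0^{+\infty} A_f(x,s)\left(\int_0^{s} A_f(x,\tau)\, d\tau\right) ds.
\end{equation*}
The inner integral is at most $(\cK f)(x)$, but that is too crude; instead I want to bound $\int_0^{s} A_f(x,\tau)\, d\tau$ by something of the form $C\, \fint_{B(x,s)} \cK f\, d\mu$, or more precisely to recognize the whole right-hand side as an average of $f\cdot \cK f$ against the kernel $K$.

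First I would fix $x$ and, for the outer variable $s$, estimate $A_f(x,s) \le$ a weighted average that lets the mass of $f$ inside $B(x,s)$ be paired with the value of $\cK f$ at the same points. The key observation is the pointwise lower bound: for $y \in B(x,s)$ one has $B(x,s) \subset B(y,2s)$, hence by doubling
\begin{equation*}
\cK f(y) \ge \int_{s}^{2s}\left(\fint_{B(y,\tau)} f\, d\mu\right) d\tau \ge \frac{1}{\upgamma}\int_{s}^{2s} \frac{1}{V(x,s)}\left(\int_{B(x,s)} f\, d\mu\right) d\tau \ge \frac{s}{\upgamma}\, A_f(x,s),
\end{equation*}
using $V(y,\tau) \le \upgamma V(y,s)$ together with $V(y,s) \le \upgamma V(x,s)$ for $y\in B(x,s)$ — actually $V(y,2s)\le \upgamma^2 V(x,s)$, giving the constant $\upgamma^2$; I will track the precise power later. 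Averaging this over $y \in B(x,s)$ shows $A_f(x,s) \le \frac{\upgamma^2}{s}\, \fint_{B(x,s)} \cK f\, d\mu$, and more usefully, multiplying by $A_f(x,s)$ and integrating in $s$ reconstructs $\cK(f\cK f)(x)$ up to the doubling constant.

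The cleanest route is: bound $A_f(x,s)\le \frac{\upgamma}{s}\fint_{B(x,s)}\cK f\,d\mu$ via the display above (keeping $2s$ honest costs $\upgamma^3$ total), and separately bound $\int_0^s A_f(x,\tau)\,d\tau \le \cK f(x)$; but to avoid the pointwise value $\cK f(x)$ (which is not of the desired form) I instead symmetrize and use that $\int_0^{+\infty} A_f(x,s)\left(\int_0^s A_f(x,\tau)d\tau\right)ds \le \int_0^{+\infty} \frac{\upgamma}{s}\left(\fint_{B(x,s)}\cK f\,d\mu\right)\left(\int_{B(x,s)} f\,d\mu\right)\frac{ds}{V(x,s)}\cdot V(x,s)$, then absorb. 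Concretely, I would show
\begin{equation*}
\int_0^{+\infty} A_f(x,s)\,A_f(x,\tau\le s \text{ integrated})\,ds \le \upgamma^3 \int_0^{+\infty}\left(\fint_{B(x,s)} f\,\cK f\, d\mu\right) ds = \upgamma^3\,\cK(f\cK f)(x),
\end{equation*}
where the middle inequality uses the pointwise estimate $A_f(x,\tau) \le \frac{\upgamma}{\tau}\,\cK f(y)$ valid for all $y \in B(x,\tau)$ (same argument as above, without averaging) applied inside the $\tau$-integral, the monotone coupling $\tau \le s \Rightarrow B(x,\tau)\subset B(x,s)$, and Fubini. Combining with the factor $2$ from the squaring gives the claimed $2\upgamma^3$. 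The main obstacle is purely bookkeeping: arranging the two radial integrals so that the product $f\cdot \cK f$ appears averaged over a single ball $B(x,s)$ rather than over a pair of different balls, which is exactly where Lemma \ref{Lem:Ks} and the doubling-induced comparisons $V(y,\tau)\simeq V(x,\tau)$ for $y\in B(x,\tau)$ are used; no deep idea beyond Verbitsky's is needed, only care with the doubling constants to land on $2\upgamma^3$.
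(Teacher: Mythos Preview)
Your layer-cake approach is a natural idea, but as written it does not close, and the issue is not mere bookkeeping. After the symmetrization you are trying to bound $\int_0^s A_f(x,\tau)\,d\tau$ uniformly by a constant times $\cK f(y)$ for every $y\in B(x,s)$, so that the outer average $\fint_{B(x,s)} f(y)\,d\mu(y)$ produces $\fint_{B(x,s)} f\,\cK f\,d\mu$. That inequality is false: take $f$ concentrated near $x$ (an approximate Dirac mass). Then $A_f(x,\tau)\sim 1/V(x,\tau)$ and $\int_0^s A_f(x,\tau)\,d\tau$ blows up, while for a fixed $y\in B(x,s)\setminus\{x\}$ the value $\cK f(y)\approx K(y,x)$ stays finite. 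Your pointwise estimate $A_f(x,\tau)\le C\tau^{-1}\cK f(y)$ for $y\in B(x,\tau)$ is correct, but using it inside the $\tau$-integral produces $\int_0^s d\tau/\tau$ (or, in the other arrangement you sketch, $\int_\tau^\infty ds/s$), both divergent. The obstruction is genuine: when $\tau<d(x,y)$ the balls $B(x,\tau)$ and $B(y,\cdot)$ are not comparable, so no doubling argument recovers the small-$\tau$ contribution.

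The fix is simply to reverse the ordering. Write $|\cK f|^2(x)=2\int_0^\infty A_f(x,\tau)\bigl(\int_\tau^\infty A_f(x,s)\,ds\bigr)d\tau$ and expand the \emph{smaller} radius: for every $y\in B(x,\tau)$ and $s\ge\tau>d(x,y)$ one has $B(x,s)\subset B(y,2s)$ and $V(y,2s)\le \upgamma^2 V(x,s)$, hence $A_f(x,s)\le \upgamma^2 A_f(y,2s)$ and $\int_\tau^\infty A_f(x,s)\,ds\le \tfrac{\upgamma^2}{2}\cK f(y)$. Plugging this in gives $|\cK f|^2(x)\le \upgamma^2\,\cK(f\cK f)(x)$, even a slightly better constant. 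The paper takes a different but equivalent route: it writes $|\cK f|^2(x)=\iint K(x,y)K(x,z)f(y)f(z)$, splits the domain according to whether $2d(x,z)\ge d(y,z)$ or $2d(x,y)\ge d(y,z)$, and on each piece uses \lref{Lem:Ks} to replace $K(x,z)$ by $\upgamma^3 K(y,z)$ (resp.\ symmetrically). That kernel-level argument is what Verbitsky actually does; your radial version works too once the roles of $s$ and $\tau$ are swapped.
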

\proof
We start by the formula
$$\left|\cK f\right|^2(x)=\int_{X\times X} K(x,y)K(x,z)f(y)f(z)\, d\mu(y)d\mu(z).$$
We write $X\times X=\cU_x\cup \cV_x$ where
$$\cU_x:=\left\{ (y,z)\in X\times X,\ 2\dist(x,z)\ge \dist(y,z)\right\}\text{ and } \cV_x:=\left\{ (y,z)\in X\times X,\ 2\dist(x,z)<\dist(y,z)\right\}.$$
Notice that when $(y,z)\in \cV_x$ then
$$\dist(x,y)\ge \dist(y,z)-\dist(z,x)\ge \frac12 \dist(y,z),$$ hence
$$\cV_x\subset \cW_x=\left\{ (y,z)\in X\times X, 2\dist(x,y)\ge \dist(y,z)\right\}=\left\{ (y,z)\in X\times X, (z,y)\in \cU_x\right\}.$$
Using \lref{Lem:Ks}, one gets that for $(y,z)\in \cU_x\colon$
$$K(x,z)\le \upgamma K(z,x)\le \upgamma K_{\frac12}(z,y)\le \upgamma^2\, K(z,y)\le  \upgamma^3 \, K(y,z), $$ we obtain that:
\begin{align*}
\int_{\cU_x} K(x,y)K(x,z)f(y)f(z)\, d\mu(y)d\mu(z)&\le  \upgamma^3 \int_{\cU_x} K(x,y)K(y,z)f(y)f(z)\, d\mu(y)d\mu(z)\\
&\le  \upgamma^3  \left(\cK \left(f\cK f\right)\right)(x).
\end{align*}
Using the same argument for the integral over $\cV_x\subset \cW_x$, we obtain the result.\endproof
\subsection{Equivalent conditions for a Trace inequality}
The following theorem is a generalisation of results by Maz'ya and Verbitsky \cite{MVarkiv} and of Kerman-Sawyer \cite{KS} (see also \cite{PW,SawyerW,SWZ}) and our proof is inspired by the one provided by Verbitsky \cite{Verbsurvol}
\begin{thm}\label{thm:Genetrace} If  $(X,\dist,\mu)$ is a metric measure space satisfying the doubling property \eqref{doubling1} and $q$ is a non negative locally integrable function then the following are equivalent
\begin{enumerate}[i)]
\item The operator $\cK\sqrt{q}$ has a continuous extension $\cK\sqrt{q}\colon L^2(X,\mu)\rightarrow  L^2(X,\mu)$
\item $Q:=\cK(q)$ is finite $\mu$-almost everywhere  and the operator $\cK Q$ has a continuous extension $\cK Q\colon L^2(X,\mu)\rightarrow  L^2(X,\mu)$,
 \item $Q:=\cK(q)$ is finite $\mu$-almost everywhere and there is a constant $C$ such that 
$$\cK(Q^2)(x)\le C\cK(q)(x), \mu \text{ a.e. } x\in X,$$
\item  there is a constant $C'$ such that for any ball $B\subset X\colon$
$$\int_B \left|\cK(q\un_B)\right|^2d\mu\le C'\int_B q\,d\mu.$$
\end{enumerate} 

Moreover $\left\| \cK\sqrt{q}\right\|^2_{L^2\to L^2},\ \left\|\cK Q\right\|_{L^2\to L^2}$ and the constants $C, \,C'$ are mutually controlled.
\end{thm}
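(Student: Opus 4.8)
The plan is to prove the four conditions equivalent via the cycle $\textbf{i)}\Rightarrow\textbf{iii)}\Rightarrow\textbf{ii)}\Rightarrow\textbf{iv)}\Rightarrow\textbf{i)}$, following Verbitsky's Euclidean scheme \cite{Verbsurvol} but replacing every use of Euclidean geometry by \lref{Lem:Ks} and \lref{Lem:KK}, which are precisely the doubling-space substitutes for the Riesz-kernel almost-symmetry and the ``$|\mathcal K f|^2\lesssim \mathcal K(f\,\mathcal K f)$'' pointwise inequality. Throughout I will freely use that $\mathcal K$ is essentially self-adjoint up to the constant $\upgamma$, i.e. $\upgamma^{-1}\mathcal K^*\le \mathcal K\le\upgamma\,\mathcal K^*$ as positive kernel operators, so that a norm bound for $\mathcal K\sqrt q$ is equivalent (with constants depending only on $\upgamma$) to one for $\sqrt q\,\mathcal K$.

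First I would prove $\textbf{i)}\Rightarrow\textbf{iii)}$. Assuming $\|\mathcal K\sqrt q\|_{L^2\to L^2}^2\le C$, testing the adjoint bound against $f=\sqrt q\,\un_B$ gives $\int_B|\mathcal K(q\un_B)|^2\,d\mu\lesssim C\int_B q\,d\mu$ (this is already $\textbf{iv)}$, but I'll keep it in reserve). To get the pointwise statement, apply \lref{Lem:KK} with $f=q$: $Q^2=|\mathcal K q|^2\le 2\upgamma^3\,\mathcal K(q\,\mathcal K q)=2\upgamma^3\,\mathcal K(qQ)$; finiteness of $Q$ a.e. follows since $\|\mathcal K\sqrt q\|<\infty$ forces $q\in L^1_{\rm loc}$ with $\sqrt q\in$ the form domain, making $\mathcal K q$ locally finite by a standard localization. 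Then $\mathcal K(Q^2)\le 2\upgamma^3\,\mathcal K\bigl(\mathcal K(qQ)\bigr)$, and I would estimate $\mathcal K\mathcal K(qQ)$ by self-adjointness and Cauchy--Schwarz against $q\,d\mu$: writing $\langle \mathcal K\mathcal K(qQ),\psi\rangle=\langle qQ,\mathcal K\mathcal K\psi\rangle$ and using $\mathcal K\sqrt q\in\mathcal B(L^2)$ twice, one bounds this by $C^2\|\sqrt q\,Q\|_{L^2(\text{loc})}\cdots$; the cleaner route, which I'd actually take, is Verbitsky's: integrate the pointwise inequality $Q^2\le 2\upgamma^3\mathcal K(qQ)$ against $q\un_B$, use Cauchy--Schwarz and $\textbf{iv)}$-type self-improvement to get $\int_B qQ^2\lesssim\int_B q\,Q^2$ with a gain, yielding $\mathcal K(Q^2)\le C'Q$ pointwise by a dyadic/maximal-function argument. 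The implication $\textbf{iii)}\Rightarrow\textbf{ii)}$ is immediate: $\mathcal K(Q^2)\le CQ$ says exactly that $\mathcal K Q\cdot(\text{its adjoint})$ is bounded on $L^2$ by the Schur test with test function $Q$ (and constant $\upgamma C$, absorbing the $\mathcal K$ vs $\mathcal K^*$ discrepancy), hence $\mathcal K Q\in\mathcal B(L^2)$ with $\|\mathcal K Q\|^2\le \upgamma C$.

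Next, $\textbf{ii)}\Rightarrow\textbf{iv)}$: given $\|\mathcal K Q\|_{L^2\to L^2}^2\le C_{ii}$, fix a ball $B$ and estimate $\int_B|\mathcal K(q\un_B)|^2\,d\mu$. On $B$ one has the trivial pointwise domination $\mathcal K(q\un_B)(x)\le Q(x)$, and moreover an averaging argument using \lref{Lem:Ks} shows $\mathcal K(q\un_B)(x)$ is comparable to a single tail average, so that $\int_B |\mathcal K(q\un_B)|^2 d\mu\lesssim \|\un_B \,\mathcal K Q^{?}\|$; the honest argument (again Verbitsky's) is to note $\int_B q\,d\mu$ is comparable, up to doubling constants and by \lref{Lem:Ks}, to $\mu(B)\,V(x_B,r_B)^{-1}\int_B q$ absorbed into $\langle q\un_B, \mathcal K(q\un_B)\rangle$ type quantities, and then $\int_B|\mathcal K(q\un_B)|^2\le \langle \mathcal K(q\un_B),\mathcal K(q\un_B)\rangle\le \upgamma\langle q\un_B,\mathcal K\mathcal K(q\un_B)\rangle$, which by $\textbf{ii)}$ applied with the weight $Q$ (after checking $\mathcal K(q\un_B)\le Q$) is $\lesssim C_{ii}\int_B q\,d\mu$. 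Finally $\textbf{iv)}\Rightarrow\textbf{i)}$ is the substantive analytic step and the one I expect to be the main obstacle: it is a Sawyer-type testing theorem, i.e. one must upgrade the bound ``$\mathcal K\sqrt q$ is bounded on characteristic functions of balls'' to ``$\mathcal K\sqrt q$ is bounded on all of $L^2$.'' I would run the standard dyadic decomposition of $X$ (doubling spaces admit a system of dyadic cubes à la Christ), write $\mathcal K\sqrt q$ as a sum over dyadic scales, and use the $\textbf{iv)}$-hypothesis together with \lref{Lem:Ks} to verify the two Sawyer testing conditions (on $\un_Q$ and on its transpose), then invoke the dyadic $T1$/Sawyer machinery; the decay $\int_{d(x,y)}^\infty d\tau/V(x,\tau)$ of the kernel plus doubling gives the required ``almost orthogonality'' between scales. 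The constants produced at each stage depend only on $\upkappa,\upnu$, and tracking them around the cycle yields the stated mutual control of $\|\mathcal K\sqrt q\|^2$, $\|\mathcal K Q\|$, $C$ and $C'$.

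Where I expect real work: the $\textbf{iv)}\Rightarrow\textbf{i)}$ step genuinely needs a Sawyer-type two-weight testing argument, and writing it carefully requires a dyadic structure on the doubling space and a self-improvement (good-$\lambda$ or stopping-time) argument to pass from testing over balls to the global bound; this is exactly the place where the Euclidean proof used Fefferman--Phong / Kerman--Sawyer technology, and the adaptation is routine in spirit but not in length. The ``easy'' implications $\textbf{i)}\Leftrightarrow\textbf{iii)}\Leftrightarrow\textbf{ii)}$ are, by contrast, pure Schur-test and the two pointwise lemmas already proved.
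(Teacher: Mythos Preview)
Your cycle is $\textbf{i)}\Rightarrow\textbf{iii)}\Rightarrow\textbf{ii)}\Rightarrow\textbf{iv)}\Rightarrow\textbf{i)}$, and you flag $\textbf{iv)}\Rightarrow\textbf{i)}$ as the substantive step requiring Sawyer two-weight testing and Christ dyadic cubes. This is where your plan diverges from the paper and, more importantly, where it becomes unnecessarily heavy. The paper runs the cycle the other way, $\textbf{i)}\Rightarrow\textbf{iv)}\Rightarrow\textbf{iii)}\Rightarrow\textbf{ii)}\Rightarrow\textbf{i)}$, and the point of this ordering is precisely that \emph{no} Sawyer-type theorem is needed: the step $\textbf{iv)}\Rightarrow\textbf{i)}$ is never taken directly. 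Instead $\textbf{iv)}\Rightarrow\textbf{iii)}$ is the only nontrivial implication, and it is handled by a completely elementary computation. The key observation you are missing is that $\textbf{iv)}$ alone forces the pointwise bound $\fint_{B(y,s)} q\,d\mu\le \upgamma^6 C'/s^2$ for every ball (take $z,w\in B$, use $K(z,w)\ge s/(\upgamma^3 V(y,s))$ and plug into $\textbf{iv)}$). With this in hand one splits
\[
\cK(Q^2)(o)\le 2\int_0^\infty\fint_{B(o,r)}\bigl|\cK(q\un_{B(o,2r)})\bigr|^2\,dr
+2\int_0^\infty\fint_{B(o,r)}\bigl|\cK(q\un_{X\setminus B(o,2r)})\bigr|^2\,dr;
\]
the first piece is bounded by $\textbf{iv)}$ directly, and the second is controlled by inserting the $1/s^2$ estimate into the tail integral and integrating out, yielding $\cK(Q^2)(o)\lesssim C'\,\cK(q)(o)$. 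Once you have $\textbf{iii)}$, the implications $\textbf{iii)}\Rightarrow\textbf{ii)}\Rightarrow\textbf{i)}$ go exactly as you say, via \lref{Lem:KK} and Cauchy--Schwarz (your Schur-test description of $\textbf{iii)}\Rightarrow\textbf{ii)}$ is fine, and your $\textbf{ii)}\Rightarrow\textbf{i)}$ is essentially the paper's argument).

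So the gap is not correctness in principle---a Sawyer testing theorem for positive kernels on doubling spaces would indeed close your loop---but that you have located the difficulty in the wrong place and are reaching for machinery the problem does not require. Your sketches of $\textbf{i)}\Rightarrow\textbf{iii)}$ and $\textbf{ii)}\Rightarrow\textbf{iv)}$ are also too vague to stand as proofs (the ``self-improvement'' and ``averaging argument using \lref{Lem:Ks}'' are not spelled out), whereas the paper simply does not need those implications.
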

\
\begin{proof}
\vskip0.3cm
\par
{\bf Proof of  i)$\Rightarrow$ iv).} We test the $L^2$ boundedness of $T=\cK\sqrt{q}$ on $f:=\sqrt{q}\un_{B}$ and get that
$$\int_B  \left|\cK(q\un_B)\right|^2\,d\mu\le \int_X  \left|\cK(q\un_B)\right|^2\,d\mu(y)\le \|T\|^2\, \int_B  q\,d\mu.$$
Hence {\bf iv)} holds with $C'=\|T\|^2.$
\par
{\bf Proof of  ii)$\Rightarrow$ i).} We have that for any $f\in L^2(X,\mu)\colon$
\begin{align*}
\int_X q\left|\cK(f)\right|^2\,d\mu&\le 2\upgamma^3\,\int_X q\,\left(\cK \left(|f|\cK |f|\right)\right)\,d\mu\\
&\le 2\upgamma^3\,\langle \cK^*(q)\cK |f|, |f|\rangle_{L^2_\mu}\\
&\le2\upgamma^4\,\left\|Q \cK |f|\right\|_{L^2_\mu}\| f\|_{L^2_\mu}\\
&\le 2\upgamma^5\,\left\|Q \cK^* |f|\right\|_{L^2_\mu}\| f\|_{L^2_\mu}\\
&\le 2\upgamma^5\,\left\|Q \cK^* \right\|_{L^2\to L^2}\| f\|^2_{L^2_\mu}\\
&\le 2\upgamma^5\,\left\| \cK Q \right\|_{L^2\to L^2}\| f\|^2_{L^2_\mu}.
\end{align*}
Using that $\int_X q\,\left|\cK^*(f)\right|^2\,d\mu\le \upgamma^2 \int_X q\,\left|\cK(f)\right|^2\,d\mu$ we obtain that
$$\left\| \cK\sqrt{q}\right\|^2_{L^2\to L^2}=\left\| \sqrt{q}\cK^*\right\|^2_{L^2\to L^2}\le 2\upgamma^7\,\left\| \cK Q \right\|_{L^2\to L^2}. $$

{\bf Proof of  iii)$\Rightarrow$ ii).} Assuming that {\bf iii)} holds and proceeding in the same way, for any $f\in L_c^2(X,\mu)$, we obtain the inequality
\begin{align*}\int_X Q^2\left|\cK(f)\right|^2\,d\mu&\le 2\upgamma^3\,\langle \cK^*(Q^2)\cK |f|, |f|\rangle_{L^2_\mu}\\
&\le 2\upgamma^4\,\, \langle \cK(Q^2)\cK |f|, |f|\rangle_{L^2_\mu}.
\end{align*}
Using the hypothesis $ \cK(Q^2)\le C Q$ one gets that 
\begin{align*}\int_X Q^2(x)\left(\cK|f|\right)^2(x)d\mu(x)&\le  2\upgamma^4\,C\, \langle Q\cK |f|, |f|\rangle_{L^2_\mu}\\
& \le 2\upgamma^4\,C\ \| Q\left(\cK|f|\right)\|_{L^2_\mu}\, \| f\|_{L^2_\mu}. \end{align*}
As $f\in L_c^2(X,\mu)$ and  $Q\cK |f|\in L_{loc}^2$, the first inequality above implies that $Q\left(\cK|f|\right)\in L^2_\mu$.
Hence
$$\left\|Q \left(\cK|f|\right)\right\|_{L^2_\mu}\le  2\upgamma^4\,C\| f\|_{L^2_\mu}$$ and
$$\left\| \cK Q\right\|_{L^2\to L^2}=\left\| Q \cK^*\right\|_{L^2\to L^2}\le\upgamma \,  \left\| Q \cK\right\|_{L^2\to L^2}\le 2\upgamma^5\,C.$$

{\bf Proof of  iv)$\Rightarrow$ iii).} This is the most complicated implication. Our goal is therefore to estimate the quantity:
$$\cK(Q^2)(o)=\int_0^{+\infty}\left(\fint_{B(o,r)} |\cK(q)|^2(x)d\mu(x)\right)dr.$$

We decompose
$$\cK(Q^2)(o)\le 2\textbf{I}+2\textbf{II}$$
where
$$\textbf{I}=\int_0^{+\infty}\left(\fint_{B(o,r)} \left|\cK(\un_{B(o,2r)}q\,\right)|^2(x)d\mu(x)\right)dr$$ 
and
$$\textbf{II}=\int_0^{+\infty}\left(\fint_{B(o,r)} \left|\cK(\un_{X\setminus B(o,2r)}q\,\right)|^2(x)d\mu(x)\right)dr.$$
Let's start by the estimation of $\textbf{I}$,
we easily get:
$$\fint_{B(o,r)} \left|\cK(\un_{B(o,2r)}q\,\right)|^2(x)d\mu(x)\le \frac{V(o,2r)}{V(o,r)}\fint_{B(o,2r)} \left|\cK(\un_{B(o,2r)}q\,\right)|^2(x)d\mu(x)$$
and using  the property $iv)$ one gets that 
\begin{equation}\label{estimI}
\textbf{I} \le \upgamma C'\,\int_{0}^{+\infty}\left(\fint_{B(o,2r)} q(x)d\mu(x)\right)dr=\frac12 \upgamma C'\, \cK(q)(o).\end{equation}

For estimation of $\textbf{II}$, we first need an estimate of $\fint_B q$ for any ball $B=B(y,s)$. If $z,w\in B$, then $d(w,z)\le 2s$ hence 
$$K(z,w)\ge \int_{2s}^{3s} \frac{d\tau}{V(z,\tau)}\ge \frac{s}{V(z,3s)}\ge \frac{s}{\upgamma V(y,3s)}\ge  \frac{s}{\upgamma^3 V(y,s)}.$$
Hence the condition $\text{iv)}$ yields that 
$$V(y,s) \left(\frac{s}{\upgamma^3 V(y,s)}\right)^2\left(\int_B q\,d\mu\right)^2\le C' \int_B q\,d\mu$$ so that 
\begin{equation}\label{qB}
\fint_B qd\mu \le \upgamma^6 C' \frac{1}{s^2}
.\end{equation}

If $x\in B(o,r)$ and $\tau\le r$ then $B(x,\tau)\subset B(o,2r)$ so that 
 
 $$\cK\left(\un_{X\setminus B(o,2r)}q\,\right)(x)=\int_{r}^{+\infty} \left(\fint_{B(x,\tau)} \un_{X\setminus B(o,2r)}q\,d\mu\right) d\tau.$$
 Similarly if  $x\in B(o,r)$ and $\tau\ge  r$ then $B(x,\tau)\subset B(o,2\tau)$ and using  
 $V(o,2\tau)\le \upgamma^2V(x,\tau)$ one gets that 
 \begin{equation}\label{qBc}
 \cK\left(\un_{X\setminus B(o,2r)}q\,\right)(x)\le  \upgamma^2\int_{r}^{+\infty} \left(\int_{B(o,2\tau)\setminus B(o,2r)}q\,d\mu\right) \frac{d\tau}{V(o,2\tau)}.
 \end{equation}
 We introduce $\Phi(\tau)=\fint_{B(o,2\tau)}q\,d\mu,$
 Using  \eqref{qB}, we  gets that 
\begin{equation}\label{qBcc}
 \Phi(\tau)\le  \upgamma^6C'\frac{1}{\tau^2}.
 \end{equation}
 And from \eqref{qBc} we get the following estimates:
\begin{align*}
\text{II}&\le \upgamma^4\ \int_{0\le r\le\sigma\,,\, 0\le r\le \tau } \Phi(\tau)\Phi(\sigma)d\tau d\sigma dr\\
&\le 2\upgamma^4  \int_{0\le r\le\sigma\le \tau } \Phi(\tau)\Phi(\sigma)d\tau d\sigma dr\\
&\le  2\upgamma^{10}C'  \int_{0\le r\le\sigma } \Phi(\sigma)\frac{1}{\sigma}d\sigma dr\\
&\le 2\upgamma^{10}C' \int_{0 }^{+\infty}\Phi(\sigma)\left( \int_0^\sigma dr\right)\frac{d\sigma}{\sigma}\\
&\le 2\upgamma^{10}C'  \int_{0 }^{+\infty}\Phi(\sigma)d\sigma\\
&\le 2\upgamma^{10}C' \int_{0 }^{+\infty}\left(\fint_{B(o,2\sigma)}q\,d\mu\right)d\sigma\\
&\le\upgamma^{10}C'\, \cK(q)(o). \end{align*}

And this estimate together with \eqref{estimI} yields that if {\bf iv)} holds then  {\bf iii)} holds with
$$C=\left(\gamma+\upgamma^{10}\right) C'.$$

\end{proof}

\subsection{} The next estimate provides a class of operators whose Schwartz kernels look like the same as the one of the operator $\cK$ studied previously.
\begin{prop}\label{prop:estikernel} If $(X,\dist,\mu)$ is a metric measure space satisfying the doubling property \eqref{doubling1} then
for any $D,s>0$, there are  positive constants $c,C$ depending only on $D,s$ and the doubling constants $\upkappa,\upnu$ such that for any $x,y\in X\colon$
$$c \int_{d(x,y)}^\infty  \frac{r^{2s-1}}{V(x,r)}dr\le \int_0^\infty \frac{e^{-\frac{d^2(x,y)}{D t}}}{V(x,\sqrt{t})}\, t^{s-1}dt\le C\int_{d(x,y)}^\infty  \frac{r^{2s-1}}{V(x,r)}dr.$$
\end{prop}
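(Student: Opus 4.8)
The plan is to reduce the Gaussian-type integral on the left to the Riemann-type integral on the right by the substitution $t = r^2$. Writing $\rho := d(x,y)$ and using $t = r^2$, $dt = 2r\,dr$, one gets
$$\int_0^\infty \frac{e^{-\rho^2/(Dt)}}{V(x,\sqrt t)}\,t^{s-1}\,dt = 2\int_0^\infty \frac{e^{-\rho^2/(Dr^2)}}{V(x,r)}\,r^{2s-1}\,dr,$$
so it is enough to compare $\int_0^\infty \frac{e^{-\rho^2/(Dr^2)}}{V(x,r)}r^{2s-1}\,dr$ with $\int_\rho^\infty \frac{r^{2s-1}}{V(x,r)}\,dr$; when $x=y$, i.e. $\rho=0$, the two integrals literally coincide up to the factor $2$, so one may assume $\rho>0$.

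For the lower bound I would simply restrict the integral to $r\ge\rho$, where $\rho^2/(Dr^2)\le 1/D$ and hence $e^{-\rho^2/(Dr^2)}\ge e^{-1/D}$; this already yields the left-hand inequality with $c = 2e^{-1/D}$, using no geometry at all.

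For the upper bound I would split $\int_0^\infty = \int_0^\rho + \int_\rho^\infty$. On $[\rho,\infty)$ the crude bound $e^{-\rho^2/(Dr^2)}\le 1$ produces exactly $\int_\rho^\infty \frac{r^{2s-1}}{V(x,r)}\,dr$. On $(0,\rho]$ the doubling condition \eqref{doubling1} gives $V(x,\rho)\le\upkappa(\rho/r)^\upnu V(x,r)$, hence $1/V(x,r)\le \upkappa\,\rho^\upnu r^{-\upnu}/V(x,\rho)$; inserting this and substituting $r = \rho u$ gives
$$\int_0^\rho \frac{e^{-\rho^2/(Dr^2)}}{V(x,r)}\,r^{2s-1}\,dr \;\le\; \upkappa\,\frac{\rho^{2s}}{V(x,\rho)}\int_0^1 e^{-1/(Du^2)}u^{2s-1-\upnu}\,du \;=:\; \upkappa\, c_1(D,s,\upnu)\,\frac{\rho^{2s}}{V(x,\rho)},$$
where $c_1$ is a finite constant because the super-exponential decay of $u\mapsto e^{-1/(Du^2)}$ near $u=0$ dominates the power $u^{2s-1-\upnu}$. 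It then remains to absorb $\rho^{2s}/V(x,\rho)$ into the tail integral: comparing with the piece over $[\rho,2\rho]$, on which $V(x,r)\le V(x,2\rho)\le\upgamma V(x,\rho)$, gives $\int_\rho^\infty \frac{r^{2s-1}}{V(x,r)}\,dr \ge \frac{2^{2s}-1}{2s\,\upgamma}\,\frac{\rho^{2s}}{V(x,\rho)}$, where $s>0$ is used to ensure $2^{2s}>1$. Collecting constants yields the right-hand inequality with $C$ depending only on $D,s,\upkappa,\upnu$.

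There is no serious obstacle; the only point needing a little care is the contribution of small $r$ (equivalently large $t$, where the Gaussian is tiny but $1/V(x,r)$ may blow up polynomially): one must verify that the doubling bound on $1/V(x,r)$ keeps that integral finite — which it does, thanks to the Gaussian factor — and then that the residual term $\rho^{2s}/V(x,\rho)$ is controlled by the target integral, which is the only place the doubling constant genuinely enters on the upper side.
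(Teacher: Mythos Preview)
Your proof is correct and follows essentially the same route as the paper: the substitution $t=r^2$, the split at $r=\rho=d(x,y)$, the bound $e^{-\rho^2/(Dr^2)}\ge e^{-1/D}$ for the lower estimate, the crude bound $e^{\cdots}\le 1$ on $[\rho,\infty)$ together with doubling on $(0,\rho]$ for the upper estimate, and finally the comparison with the piece $\int_\rho^{2\rho}$ to absorb $\rho^{2s}/V(x,\rho)$ --- all of this matches the paper's argument, with only cosmetic differences (the paper works with $V(x,2\rho)$ rather than $V(x,\rho)$ and keeps the $t$ variable in the small-$t$ region). One harmless slip in your closing commentary: small $r$ corresponds to \emph{small} $t$, not large $t$.
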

\proof Firstly, we easily get that 
$$2 e^{-\frac{1}{D}}\int_{d(x,y)}^\infty  \frac{r^{2s-1}}{V(x,r)}dr\le \int_{d^2(x,y)}^\infty \frac{e^{-\frac{d^2(x,y)}{D t}}}{V(x,\sqrt{t})}\, t^{s-1}dt\le 2 \int_{d(x,y)}^\infty  \frac{r^{2s-1}}{V(x,r)}dr.$$
Secondly using the doubling assumption, we get 
\begin{align*}
 \int_0^{d^2(x,y)} \frac{e^{-\frac{d^2(x,y)}{D t}}}{V(x,\sqrt{t})}\, t^{s-1}dt& \le\frac{\upkappa}{V(x,2d(x,y))}
 \int_0^{d^2(x,y)} e^{-\frac{d^2(x,y)}{D t}}\left(\frac{2d(x,y)}{\sqrt{t}}\right)^\upnu\,t^{s-1}dt\\
 &\le \frac{\upkappa 2^\upnu d^{2s}(x,y)}{V(x,2d(x,y))}
 \int_0^{1} e^{-\frac{1}{D t}}\,t^{s-\upnu/2-1}dt.
\end{align*}
But we also have the lower estimate:
\begin{align*}\int_{d(x,y)}^\infty  \frac{r^{2s-1}}{V(x,r)}dr&\ge \int_{d(x,y)}^{2d(x,y)}  \frac{r^{2s-1}}{V(x,r)}dr\\
&\ge \frac{d^{2s}(x,y)}{2\,V(x,2d(x,y))}.
\end{align*}
Hence the result with 
$$c=2e^{-\frac{1}{D}}\text{  and  } C=2+\upkappa 2^{\upnu+1}\int_0^{1} e^{-\frac{1}{D t}}\,t^{s-\upnu/2-1}dt.$$
\endproof
%\nocite{*}
\bibliographystyle{alpha} 
\bibliography{MV.bib}
\end{document}